\providecommand{\U}[1]{\protect\rule{.1in}{.1in}}
\def \N {\mathbb{N}}
\def \bd {\boldsymbol}
\theoremstyle{definition}
\newtheorem{definition}{Definition}[section]
\newtheorem{remark}[definition]{Remark}
\theoremstyle{plain}
\newtheorem{theorem}[definition]{Theorem}
\newtheorem{proposition}[definition]{Proposition}
\newtheorem{lemma}[definition]{Lemma}
\numberwithin{equation}{section}
\begin{document}
\title[Sobolev estimates for KFP operators]{Global Sobolev theory for Kolmogorov-Fokker-Planck operators with coefficients
measurable in time and $VMO$ in space}
\author{Stefano Biagi, Marco Bramanti}
\subjclass[2010]{35K65, 35K70, 35B45, 35A08, 35K15, 42B20, 42B25}
\keywords{Kolmogorov-Fokker-Planck operators; Sobolev estimates; measurable
coefficients; VMO coefficients; Cauchy problem.}

\begin{abstract}
We consider Kolmogorov-Fokker-Planck operators of the form%
\[
\mathcal{L}u=\sum_{i,j=1}^{q}a_{ij}(x,t)u_{x_{i}x_{j}}+\sum_{k,j=1}^{N}%
b_{jk}x_{k}u_{x_{j}}-\partial_{t}u,
\]
with $\left(  x,t\right)  \in\mathbb{R}^{N+1},N\geq q\geq1$. We assume that
$a_{ij}\in L^{\infty}\left(  \mathbb{R}^{N+1}\right)  $, the matrix $\left\{
a_{ij}\right\}  $ is symmetric and uniformly positive on $\mathbb{R}^{q}$, and
the \emph{drift}%
\[
Y=\sum_{k,j=1}^{N}b_{jk}x_{k}\partial_{x_{j}}-\partial_{t}%
\]
has a structure which makes the model operator with constant $a_{ij}$
hypoelliptic, translation invariant w.r.t. a suitable Lie group operation, and
$2$-homogeneus w.r.t. a suitable family of dilations. We also assume that the
coefficients $a_{ij}$ are $VMO$ w.r.t. the space variable, and only bounded
measurable in $t$. We prove, for every $p\in\left(  1,\infty\right)  $, global
Sobolev estimates of the kind:%
\begin{align*}
\left\Vert u\right\Vert _{W_{X}^{2,p}\left(  S_{T}\right)  }\equiv &
\sum_{i,j=1}^{q}\left\Vert u_{x_{i}x_{j}}\right\Vert _{L^{p}\left(
S_{T}\right)  }+\left\Vert Yu\right\Vert _{L^{p}\left(  S_{T}\right)  }%
+\sum_{i=1}^{q}\left\Vert u_{x_{i}}\right\Vert _{L^{p}\left(  S_{T}\right)
}\\
+\left\Vert u\right\Vert _{L^{p}\left(  S_{T}\right)  }  &  \leq c\left\{
\left\Vert \mathcal{L}u\right\Vert _{L^{p}\left(  S_{T}\right)  }+\left\Vert
u\right\Vert _{L^{p}\left(  S_{T}\right)  }\right\}
\end{align*}
with $S_{T}=\mathbb{R}^{N}\times\left(  -\infty,T\right)  $ for any
$T\in(-\infty,+\infty]$. Also, the well-posedness in $W_{X}^{2,p}\left(
\Omega_{T}\right)  $, with $\Omega_{T}=\mathbb{R}^{N}\times\left(  0,T\right)
$ and $T\in\mathbb{R}$, of the Cauchy problem%
\[
\left\{
\begin{tabular}
[c]{ll}%
$\mathcal{L}u=f$ & in $\Omega_{T}$\\
$u\left(  \cdot,0\right)  =g$ & in $\mathbb{R}^{N}$%
\end{tabular}
\ \ \ \right.
\]
is proved, for $f\in L^{p}\left(  \Omega_{T}\right)  ,g\in W_{X}^{2,p}\left(
\mathbb{R}^{N}\right)  $.

\end{abstract}
\maketitle

\section{Introduction and main results\label{sec intro}}

\subsection{The problem and its context\label{sec inizio}}

In this paper we deal with Kolmogorov-Fokker-Planck (KFP, in short) operators
of the form%
\begin{equation}
\mathcal{L}u=\sum_{i,j=1}^{q}a_{ij}(x,t)\partial_{x_{i}x_{j}}^{2}%
u+\sum_{k,j=1}^{N}b_{jk}x_{k}\partial_{x_{j}}u-\partial_{t}u,\qquad
(x,t)\in\mathbb{R}^{N+1} \label{L}%
\end{equation}
where $N\geq q\geq1$. The first-order part of the operator, also called
\emph{the drift term}, will be briefly denoted by%
\begin{equation}
Yu=\sum_{k,j=1}^{N}b_{jk}x_{k}\partial_{x_{j}}u-\partial_{t}u.
\label{eq:driftY}%
\end{equation}
We will make the following assumptions:

\begin{itemize}
\item[\textbf{(H1)}] $A_{0}(x,t)=(a_{ij}(x,t))_{i,j=1}^{q}$ is a symmetric
uniformly positive matrix on $\mathbb{R}^{q}$ of coefficients defined in
$\mathbb{R}^{N+1}$ and belonging to $L^{\infty}\left(  \mathbb{R}%
^{N+1}\right)  $, so that
\begin{equation}
\nu|\xi|^{2}\leq\sum_{i,j=1}^{q}a_{ij}(x,t)\xi_{i}\xi_{j}\leq\nu^{-1}|\xi|^{2}
\label{nu}%
\end{equation}
for some constant $\nu>0$, every $\xi\in\mathbb{R}^{q}$, a.e. $\left(
x,t\right)  \in\mathbb{R}^{N+1}$.

The co\-ef\-fi\-cients will be also assumed $VMO$ w.r.t.\thinspace$x$ (and
merely measurable w.r.t.\thinspace$t$) in a sense that will be made precise
later \vspace*{0.1cm}(see Assumption (H3) in Section \ref{sec assumptions}%
).\medskip

\item[\textbf{(H2)}] The matrix $B=(b_{ij})_{i,j=1}^{N}$ satisfies the
following condition: for $m_{0}=q$ and suitable positive integers $m_{1}%
,\dots,m_{k}$ such that
\begin{equation}
m_{0}\geq m_{1}\geq\ldots\geq m_{k}\geq1\quad\mathrm{and}\quad m_{0}%
+m_{1}+\ldots+m_{k}=N, \label{m-cond}%
\end{equation}
we have
\begin{equation}
B=%
\begin{pmatrix}
\mathbb{O} & \mathbb{O} & \ldots & \mathbb{O} & \mathbb{O}\\
B_{1} & \mathbb{O} & \ldots & \ldots & \ldots\\
\mathbb{O} & B_{2} & \ldots & \mathbb{O} & \mathbb{O}\\
\vdots & \vdots & \ddots & \vdots & \vdots\\
\mathbb{O} & \mathbb{O} & \ldots & B_{k} & \mathbb{O}%
\end{pmatrix}
\label{B}%
\end{equation}
where $\mathbb{O}$ is the null matrix, and every block $B_{j}$ is an
$m_{j}\times m_{j-1}$ matrix of rank $m_{j}$ (for $j=1,2,\ldots,k$).
\end{itemize}

\medskip

\noindent We explicitly note that, when $q<N$, the operator $\mathcal{L}$ is
\emph{ultraparabolic}; nevertheless, under assumptions \textbf{(H1)}%
-\textbf{(H2)}, as proved by Lanconelli and Polidoro \cite{LP}, the
\emph{model operator} $\mathcal{L}_{0}$ corresponding to the case of
\emph{constant }$a_{ij}$ (as well as its formal transpose $\mathcal{L}%
_{0}^{\ast}$) is hypoelliptic and satisfies the following properties:

\begin{itemize}
\item[(a)] $\mathcal{L}_{0}$ is left-invariant on the \emph{non-commutative
Lie group} $\mathbb{G}=(\mathbb{R}^{N+1},\circ)$, where the composition law
$\circ$ is defined as follows
\begin{align}
(y,s)\circ(x,t)  &  =(x+E(t)y,t+s)\label{traslazioni}\\
(y,s)^{-1}  &  =(-E(-s)y,-s),\nonumber
\end{align}
and $E(t)=\exp(-tB)$ (which is defined for every $t\in\mathbb{R}$ since the
matrix $B$ is nilpotent). For a future reference, we explicitly notice that
\begin{equation}
(y,s)^{-1}\circ(x,t)=(x-E(t-s)y,t-s), \label{eq:convolutionG}%
\end{equation}
and that the Lebesgue measure is the Haar measure, which is also invariant
with respect to the inversion. \vspace*{0.1cm}

\item[(b)] $\mathcal{L}_{0}$ is homogeneous of degree $2$ with respect to a
nonisotropic family of \emph{dilations} in $\mathbb{R}^{N+1}$, which are
automorphisms of $\mathbb{G}$ and are defined by
\begin{equation}
D(\lambda)(x,t)\equiv(D_{0}(\lambda)(x),\lambda^{2}t)=(\lambda^{q_{1}}%
x_{1},\ldots,\lambda^{q_{N}}x_{N},\lambda^{2}t), \label{dilations}%
\end{equation}
where the $N$-tuple $(q_{1},\ldots,q_{N})$ is given by
\[
(q_{1},\ldots,q_{N})=(\underbrace{1,\ldots,1}_{m_{0}},\,\underbrace{3,\ldots
,3}_{m_{1}},\ldots,\underbrace{2k+1,\ldots,2k+1}_{m_{k}}).
\]
The integer%
\begin{equation}
Q=\sum_{i=1}^{N}q_{i}>N \label{eq:defQhomdim}%
\end{equation}
is called the \emph{homogeneous dimension} of $\mathbb{R}^{N}$, while $Q+2$ is
the homogeneous dimension of $\mathbb{R}^{N+1}$.
\end{itemize}

\bigskip

Actually, Lanconelli-Polidoro in \cite{LP} have studied constant-coefficients
KFP operators corresponding to a wider class of matrices $B$, which are not
nilpotent; these more general operators are hypoelliptic, left-invariant with
respect to the above operation $\circ$, but they are not necessarily
homogeneous. After the seminal paper \cite{LP}, more general families of
degenerate KFP operators of the kind (\ref{L}), satisfying the same structural
conditions on the matrices $A_{0}$ and $B$ but with variable coefficients
$a_{ij}\left(  x,t\right)  $, have been studied by several authors. In
particular, Schauder estimates have been investigated first by Manfredini
\cite{Ma} and later, under more general assumptions on $B$, by Di
Francesco-Polidoro in \cite{DP}, on bounded domains, assuming the coefficients
$a_{ij}$ H\"{o}lder continuous with respect to the intrinsic distance induced
in $\mathbb{R}^{N+1}$ by the vector fields $\partial_{x_{1}},...,\partial
_{x_{q}},Y$. In the framework of Sobolev spaces, local $L^{p}$ estimates for
the derivatives $\partial_{x_{i}x_{j}}^{2}u$ ($i,j=1,2,...,q$) and $Yu$ in
terms of $\mathcal{L}u$ have been established, for operators (\ref{L}) with
$VMO$ coefficients $a_{ij}\left(  x,t\right)  $, by Bramanti, Cerutti,
Manfredini \cite{BCM}. We recall that the $VMO$ assumption allows for some
kind of discontinuity.

In recent years, there has been growing interest, especially motivated from
the research in the field of stochastic differential equations (see e.g.
\cite{PP}), in the study of KFP operators with coefficients $a_{ij}$ rough in
$t$ (say, $L^{\infty}$), and with some mild regularity (for instance,
H\"{o}lder continuity) only w.r.t. the space variables. The Schauder estimates
that one can reasonably expect under this mild assumption consist in
controlling the H\"{o}lder seminorms w.r.t. $x$ of the derivatives involved in
the equations, uniformly in time. Such estimates are sometimes called
\textquotedblleft partial Schauder estimates\textquotedblright. In the paper
\cite{BB Schauder} we have established global partial Schauder estimates for
degenerate KFP operators (\ref{L}) satisfying assumptions \textbf{(H1)}%
-\textbf{(H2)}, with coefficients $a_{ij}$ H\"{o}lder continuous in space,
bounded measurable in time. We have also shown that the second order
derivatives $\partial_{x_{i}x_{j}}^{2}u$ (for $i,j=1,2,...,q$) are actually
locally H\"{o}lder continuous also w.r.t. time. In \cite{BB domain} we have
proved analogous results on bounded cylinders, while more general global
results in the context of partially Dini continuous functions have been
established by Biagi, Bramanti, Stroffolini in \cite{BBS}. Partial Schauder
estimates for degenerate KFP operators have been proved also in \cite{CRHM} by
Chaudru de Raynal, Honor\'{e}, Menozzi, with different techniques and without
getting the H\"{o}lder control in time of second order derivatives. We refer
to the introduction of \cite{BB Schauder} for further references on both
standard and partial Schauder estimates for these operators.

On the other hand, Sobolev estimates have been proved by Menozzi \cite{M} for
a family of KFP operators with very general drift (containing as a special
case the class (\ref{L})), assuming the coefficients $a_{ij}\left(
x,t\right)  $ uniformly continuous in $x$ and bounded measurable in $t$.
Later, Dong and Yastrzhembskiy in \cite{DY}, have proved Sobolev estimates of
this kind for a class of operators (\ref{L}) with the drift of special type
(\textquotedblleft kinetic KFP operator\textquotedblright), assuming the
coefficients $a_{ij}\left(  x,t\right)  $ to be $VMO$ w.r.t. $x$ and bounded
measurable w.r.t. time. We also quote the paper \cite{DY2}, by the same
Authors, where similar results are proved in the case of KFP operators in
divergence form.

In the present paper we prove global Sobolev estimates, on $\mathbb{R}^{N+1}$
and on infinite strips $S_{T}=\mathbb{R}^{N}\times\left(  -\infty,T\right)  $,
for KFP operators (\ref{L}) satisfying assumptions \textbf{(H1)}%
-\textbf{(H2)}, with coefficients $a_{ij}\left(  x,t\right)  $ $VMO$ w.r.t.
$x$ and $L^{\infty}$ w.r.t. time (see assumption (\textbf{H3}) in Section
\ref{sec assumptions} and Theorems \ref{Thm main a priori estimates} and
\ref{Thm global strip} for the precise results). Moreover, we prove the unique
solvability in $W_{X}^{2,p}\left(  S_{T}\right)  $ of the equation
\[
\mathcal{L}u-\lambda u=f\text{ in }S_{T}%
\]
for $\lambda>0$ large enough and $f\in L^{p}\left(  S_{T}\right)
,T\in(-\infty,+\infty]$ (see Theorem \ref{thm:SolvabilityKlambda}). From this
fact we also deduce well-posedness results for the Cauchy problem for
$\mathcal{L}$ on strips $\Omega_{T}=\mathbb{R}^{N}\times\left(  0,T\right)
,T\in\left(  0,+\infty\right)  ,$%
\[
\left\{
\begin{array}
[c]{l}%
\mathcal{L}u=f\text{ in }\Omega_{T}\\
u\left(  \cdot,0\right)  =g
\end{array}
\right.
\]
with $f\in L^{p}\left(  \Omega_{T}\right)  ,g\in W_{X}^{2,p}\left(
\mathbb{R}^{N}\right)  $ (see Theorem \ref{thm:ExistenceCauchy}).

Our class of operators contains that one studied in \cite{DY}, but our
technique is completely different. Actually, here we apply the technique that
we have recently introduced in \cite{BB VMO}, which is based on a combination
of different ingredients. To describe this technique and the strategy of this
paper, let us briefly recall two different approaches which have been followed
so far to handle uniformly elliptic or parabolic nonvariational operators with
$VMO$ coefficients. The first one dates back to the early 1990s with the works
of Chiarenza-Frasca-Longo \cite{CFL1}, \cite{CFL2} for uniformly elliptic
operators. That technique heavily relies on representation formulas of
$\partial_{x_{i}x_{j}}^{2}u$ in terms of $\mathcal{L}u$, via a singular kernel
\textquotedblleft with variable coefficients\textquotedblright, obtained from
the fundamental solution of the model operator with constant coefficients.
This singular kernel needs to be expanded in series of spherical harmonics,
getting singular kernels \textquotedblleft with constant
coefficients\textquotedblright\ (i.e., of convolution type). One then needs to
apply results of $L^{p}$ continuity of singular integral operators and of the
\emph{commutator} of a singular integral operator with a $BMO$ function. This
technique, and its furher extensions to parabolic operators (see \cite{BC}),
KFP operators (see \cite{BCM}), and nonvariational operators structured on
H\"{o}rmander vector fields (see \cite{BB1}, \cite{BB2}), exploits both
translation invariance and homogeneity (in suitable senses) of the model
operator with constant $a_{ij}$ or, in some cases, the possibility of
approximating locally the operator under study with another one possessing
these properties. Moreover, it relies on the knowledge of fine properties of
the fundamental solution of the constant coefficient operator, with bounds on
the derivatives of every order of this fundamental solution, which have to be
uniform as the constant matrix $\left(  a_{ij}\right)  $ ranges in a fixed
ellipticity class.

A different technique to prove $W^{2,p}$ a-priori estimates for nonvariational
elliptic or parabolic operators with $VMO$ coefficients has been devised in
2007 by Krylov \cite{K}, and exploited in a series of subsequent papers. A key
step in Krylov' technique constists in establishing a pointwise estimate on
the sharp maximal function of the second derivatives of $u$ in terms of
$\overline{\mathcal{L}}u$, where $\overline{\mathcal{L}}$ is the model
operator with constant $a_{ij}$ (if the final goal is to study elliptic
operators with $VMO$ coefficients)\ or coefficients $a_{ij}\left(  t\right)
\in L^{\infty}\left(  \mathbb{R}\right)  $ (if the final goal is to study
parabolic operators with coefficients $a_{ij}\left(  x,t\right)  $ $VMO$
w.r.t. $x$ and $L^{\infty}$ w.r.t. $t$). Once this estimate is proved, a
clever procedure allows to exploit the $VMO$ assumption on $a_{ij}$ for
replacing the model operator $\overline{\mathcal{L}}$ with $\mathcal{L}$. In
turn, in order to establish this estimate on the sharp maximal function of
$\partial_{x_{i}x_{j}}^{2}u$, Krylov makes use of many results on elliptic or
parabolic operators with constant coefficients (or, in the parabolic case,
coefficients only depending on $t$), including several classical solvability
results, together with pointwise estimates on the derivatives of the
solutions, and exploits translations, dilations, and the Poincar\'{e}'s
inequality in the space variables. The extension of \emph{this part }of
Krylov' technique to degenerate operators of H\"{o}rmander type seems very
difficult. So far, we can quote only the paper \cite{BT} by Bramanti-Toschi
where this technique has been successfully implemented, for novariational
operators modeled on H\"{o}rmander vector fields in Carnot groups.

In \cite{BB VMO} we have introduced a new approach which combines some ideas
of both the strategies described above (by Chiarenza-Frasca-Longo and by
Krylov), which we are now going to describe. In the present situation, our
\emph{model operator }is the KFP operator with coefficients \emph{only
depending on time }(in a merely $L^{\infty}$ way):%
\begin{equation}
\overline{\mathcal{L}}u=\sum_{i,j=1}^{q}a_{ij}(t)\partial_{x_{i}x_{j}}%
^{2}u+\sum_{k,j=1}^{N}b_{jk}x_{k}\partial_{x_{j}}u-\partial_{t}u.
\label{L model t}%
\end{equation}
We follow Krylov in the idea of exploiting an estimate on the sharp maximal
function of $\partial_{x_{i}x_{j}}^{2}u$ (in terms of $\overline{\mathcal{L}%
}u$) to prove $L^{p}$ estimates for the operator $\mathcal{L}$ with
coefficients $VMO$ w.r.t.\,$x$ and $L^{\infty}$ w.r.t.\,$t$\ (see our Theorem
\ref{Thm Krylov main step}). In order to prove this bound on the sharp maximal
function, however, we do not follow Krylov' technique but use representation
formulas for second order derivatives in terms of the model operator
$\overline{\mathcal{L}}$. Since an explicit fundamental solution for
$\overline{\mathcal{L}}$ has been built in \cite{BP}, the singular kernel
appearing in this representation formula, although not of convolution type,
can be directly studied thanks to the theory of singular integrals in spaces
of homogeneous type. The link between the two ingredients of our proof
(singular integral operators \emph{and }sharp maximal function) is contained
in an abstract real analysis result in spaces of homogeneous type which has
been proved in \cite{BB VMO}. By the way, we note that the lack of
homogeneity, and the lack of regularity w.r.t.\,the $t$ variable, of the
fundamental solution of $\overline{\mathcal{L}}$, prevents us from applying
Chiarenza-Frasca-Longo's technique in this situation.

As usual for operators with variable coefficients, we first prove a-priori
estimates for smooth functions with \emph{small }support. Passing from this
result to global a-priori estimates for any $W_{X}^{2,p}$-function requires
the use of suitable cutoff functions, a covering theorem, and interpolation
inequalities for first order derivatives. In the present setting, cutoff
functions are easily built thanks to the presence of translations and
dilations. A general covering theorem in spaces of homogeneous type, proved in
\cite{BB VMO}, applies also to our situation. Finally, in the present
situation the required interpolation inequality is just the Euclidean one on
$\partial_{x_{i}}$.

\subsection{Structure of the paper}

In Section \ref{sec assumptions} we introduce the geometric structure related
to our operator (translations, dilations, distance), and point out some known
facts about these structures. We also introduce some real variable notions
related to maximal functions, and recall the related known inequalities. We
then introduce the function spaces that will be used throughout the paper,
make our assumptions and state our main results. In Section
\ref{sec operators t} we study the model operator $\overline{\mathcal{L}}$
(\ref{L model t}) with coefficients depending only on $t$. We recall the known
facts about its fundamental solution $\Gamma$, prove suitable representation
formulas, prove the $L^{p}$ continuity of the singular integral operator
corresponding to the singular kernel $\partial_{x_{i}x_{j}}^{2}\Gamma$, and
with these tools we prove Theorem \ref{Thm Krylov main step}, which gives a
bound on the oscillation of $\partial_{x_{i}x_{j}}^{2}u$ in terms of
$\overline{\mathcal{L}}u$. In Section \ref{Sec operators a(x,t)} we consider
operators with coefficients $a_{ij}\left(  x,t\right)  $ satisfying our $VMO$
assumption in the space variables. Applying Theorem \ref{Thm Krylov main step}%
, we prove an analogous bound on the oscillation of $\partial_{x_{i}x_{j}}%
^{2}u$ in terms of $\mathcal{L}u$ (Section \ref{subsec mean oscillation}%
).\ This bound implies, via the maximal inequalities and exploiting our $VMO$
assumption, an $L^{p}$ estimate on $\partial_{x_{i}x_{j}}^{2}u$ in terms of
$\mathcal{L}u$, for smooth functions with compact support (Section
\ref{subsec Lp}). Finally, we extend the $L^{p}$ bound to a global Sobolev
estimate, with the techiques already sketched at the end of Section
\ref{sec inizio}. In Section \ref{Sec existence} we come to the existence
results. First (Section \ref{subsec refined}) we show that the global
estimates proved in Section \ref{subsec Lp} still hold on strips
$S_{T}=\mathbb{R}^{N}\times\left(  -\infty,T\right)  $ for every
$T\in\mathbb{R}$, and prove the well-posedness of the equation%
\[
\mathcal{L}u-\lambda u=f\text{ in }S_{T}%
\]
for $\lambda>0$ large enough and $f\in L^{p}\left(  S_{T}\right)  $ with
$T\in(-\infty,+\infty]$. Then, in Section \ref{Subsec Cauchy}, we deduce a
well-posedness result for the Cauchy problem for $\mathcal{L}$.

\subsection{Assumptions and main results\label{sec assumptions}}

We can now start giving some precise definitions which will allow to state our
main result.

Throughout the paper, points of $\mathbb{R}^{N+1}$ will be sometimes denoted
by the compact notation%
\[
\xi=(x,t),\quad\eta=(y,s).
\]
Let us introduce the metric structure related to the operator $\mathcal{L}$
that will be used throughout the following. The vector fields $\partial
_{x_{1}},\ldots,\partial_{x_{q}},Y$ form a system of H\"{o}rmander vector
fields in $\mathbb{R}^{N+1}$, left-invariant w.r.t.\thinspace the
com\-po\-si\-tion law $\circ$ defined in (\ref{traslazioni}). The vector
fields $\partial_{x_{i}}$ ($i=1,...,q$) are homogeneous of degree $1$, while
$Y$ is homogeneous of degree $2$ w.r.t.\thinspace the dilations $D(\lambda)$
defined in (\ref{dilations}). As every set of H\"{o}rmander vector fields with
drift, this system
\[
X=\left\{  \partial_{x_{1}},\ldots,\partial_{x_{q}},Y\right\}
\]
induces a (weighted) control distance $d_{X}$ in $\mathbb{R}^{N+1}$; we now
review this definition in our special case. First of all, given $\xi
=(x,t),\,\eta=(y,s)\in\mathbb{R}^{N+1}$ and $\delta>0$, we denote by
$C_{\xi,\eta}(\delta)$ the class of \emph{absolutely continuous} curves
\[
\varphi:[0,1]\longrightarrow\mathbb{R}^{N+1}%
\]
which satisfy the following properties:

\begin{itemize}
\item[(i)] $\varphi(0) = \xi$ and $\varphi(1) = \eta$; \vspace*{0.1cm}

\item[(ii)] for almost every $t\in\lbrack0,1]$ one has
\[
\varphi^{\prime}(t)=\sum_{i=1}^{q}a_{i}(t)\varphi_{i}(t)+a_{0}(t)Y_{\varphi
(t)},
\]
where $a_{0},\ldots,a_{q}:[0,1]\rightarrow\mathbb{R}$ are measurable functions
such that
\[
\left\vert \text{$a_{i}(t)$}\right\vert \leq\text{$\delta$ (for $i=1,\ldots
,q$)\quad and \quad$\left\vert a_{0}(t)\right\vert \leq\delta^{2}$}%
\qquad\text{a.e. on $[0,1]$}.
\]

\end{itemize}

Here $\varphi_{i}$ are the components of the vector function $\varphi$ and
$Y_{\varphi(t)}$ stands for the vector field $Y$ evaluated at the point
$\varphi(t)$. We then define
\[
d_{X}(\xi,\eta)=\inf\left\{  \delta>0:\,\exists\,\,\varphi\in C_{\xi,\eta
}(\delta)\right\}  .
\]
Since $\partial_{x_{1}},\ldots,\partial_{x_{q}},Y$ satisfy H\"{o}rmander's
rank condition, it is well-known that the function $d_{X}$ is a distance in
$\mathbb{R}^{N+1}$ (see, e.g., \cite[Prop.\,1.1]{NSW} or \cite[Chap.1]%
{BBbook}); in particular, for every fixed $\xi,\eta\in\mathbb{R}^{N+1}$ there
always exists $\delta>0$ such that $C_{\xi,\eta}(\delta)\neq\varnothing$. In
addition, by the invariance/homogeneity properties of the vector fields, we
see that

\begin{itemize}
\item[(a)] $d_{X}$ is left-invariant with respect to $\circ$, that is,
\begin{equation}
d_{X}(\xi,\eta)=d_{X}(\eta^{-1}\circ\xi,0) \label{d_X}%
\end{equation}

\item[(b)] $d_{X}$ is is jointly $1$-homogeneous with respect to $D(\lambda)$,
that is
\begin{equation}
d_{X}(D(\lambda)\xi,D(\lambda)\eta)=\lambda d_{X}(\xi,\eta)\qquad\text{ for
every $\lambda>0$}. \label{rho_X}%
\end{equation}

\end{itemize}

As a consequence of (\ref{d_X}), the function $\rho_{X}(\xi):=d_{X}(\xi,0)$ satisfies

\begin{enumerate}
\item $\rho_{X}(\xi^{-1})=\rho_{X}(\xi)$; \vspace*{0.05cm}

\item $\rho_{X}(\xi\circ\eta)\leq\rho_{X}(\xi)+\rho_{X}(\eta)$;
\end{enumerate}

moreover, by (\ref{rho_X}) we also have

\begin{itemize}
\item[(1)'] $\rho_{X}(\xi)\geq0$ and $\rho_{X}(\xi)=0\,\Leftrightarrow\,\xi
=0$; \vspace*{0.05cm}

\item[(2)'] $\rho_{X}(D(\lambda)\xi)=\lambda\rho_{X}(\xi)$,
\end{itemize}

and this means that $\rho_{X}$ is a \emph{homogeneous norm} in $\mathbb{R}%
^{N+1}$. \medskip

We now observe that also the function
\begin{equation}
\rho(\xi)=\rho(x,t):=\Vert x\Vert+\sqrt{|t|}=\sum_{i=1}^{N}|x_{i}|^{1/q_{i}%
}+\sqrt{|t|} \label{eq:defrhonorm}%
\end{equation}
is a homogeneous norm in $\mathbb{R}^{N+1}$ (i.e., it satisfies properties
(1)'-(2)' above), and therefore it is \emph{globally equivalent} to $\rho_{X}$
(see e.g. \cite[Thm.3.12]{BBbook}): there exist $c_{1},c_{2}>0$ such that
\[
c_{1}\rho_{X}(\xi)\leq\rho(\xi)\leq c_{2}\rho_{X}(\xi)\qquad\forall\,\,\xi
\in\mathbb{R}^{N+1}.
\]
As a consequence of this fact, the map
\begin{equation}
d(\xi,\eta):=\rho(\eta^{-1}\circ\xi) \label{d}%
\end{equation}
is a left-invariant, $1$-homogeneous, (quasisymmetric) \emph{quasidistance} on
$\mathbb{R}^{N+1}$. This means, precisely, that there exists a `structural
constant' $\boldsymbol{\kappa}>0$ such that
\begin{align}
d(\xi,\eta)  &  \leq\boldsymbol{\kappa}\left\{  d(\xi,\zeta)+d(\eta
,\zeta)\right\}  \qquad\forall\,\,\xi,\eta,\zeta\in\mathbb{R}^{N+1}%
;\label{quasitriangle}\\
d(\xi,\eta)  &  \leq\boldsymbol{\kappa}\,d(\eta,\xi)\qquad\forall\,\,\xi
,\eta\in\mathbb{R}^{N+1}. \label{quasisymmetric}%
\end{align}
The quasidistance $d$ is \emph{globally equivalent} to the control distance
$d_{X}$; hence, we will systematically use this quasidistance $d$ and the
associated balls
\[
B_{r}(\xi):=\left\{  \eta\in\mathbb{R}^{N+1}:\,d(\eta,\xi)<r\right\}
\qquad(\text{for $\xi\in\mathbb{R}^{N+1}$ and $r>0$}).
\]

\begin{remark}
\label{rem:propd} For a future reference, we list below some properties of $d$.

\begin{enumerate}
\item By \eqref{eq:convolutionG}, $d$ has the following explicit expression
\begin{equation}
d(\xi,\eta)=\Vert x-E(t-s)y\Vert+\sqrt{|t-s|}, \label{eq:explicitd}%
\end{equation}
for every $\xi=(x,t),\,\eta=(y,s)\in\mathbb{R}^{N+1}$. \medskip

\item Since $E(0)=\operatorname{Id}$, from \eqref{eq:explicitd} we get
\begin{equation}
d((x,t),(y,t))=\Vert x-y\Vert\qquad\text{for every $x,y\in\mathbb{R}^{N}$ and
$t\in\mathbb{R}$}, \label{d stesso t}%
\end{equation}
from which we derive that the quasidistance $d$ \emph{is sym\-me\-tric when
applied to points with the same $t$-coordinate}. We explicitly emphasize that
an a\-na\-lo\-gous property for points with the same $x$-coordinate \emph{does
not hold}: in fact, for every fixed $x\in\mathbb{R}^{N}$ and $t,s\in
\mathbb{R}$ we have
\[
d((x,t),(x,s))=\Vert x-E(t-s)x\Vert+\sqrt{|t-s|}\neq\sqrt{|t-s|}.
\]

\item Let $\xi\in\mathbb{R}^{N+1}$ be fixed, and let $r>0$. Since $d$
satisfies the quasi-triangular inequality \eqref{quasitriangle}, if $\eta
_{1},\eta_{2}\in B_{r}(\xi)$ we have
\[
d(\eta_{1},\eta_{2})<2\boldsymbol{\kappa}r.
\]

\item Taking into account the very definition of $d$, and bearing in mind that
$\rho$ is a \emph{homogeneous norm} in $\mathbb{R}^{N+1}$, it is readily seen
that
\begin{equation}
B_{r}(\xi)=\xi\circ B_{r}(0)=\xi\circ D\left(  r\right)  \left(
B_{1}(0)\right)  \quad\forall\,\,\xi\in\mathbb{R}^{N+1},\,r>0.
\label{eq:balltraslD}%
\end{equation}
From this, since the Lebesgue measure is a Haar measure on $\mathbb{G}%
=(\mathbb{R}^{N+1},\circ)$, we immediately obtain the following identity
\begin{equation}
\left\vert B_{r}\left(  \xi\right)  \right\vert =\left\vert B_{r}\left(
0\right)  \right\vert =\omega\,r^{Q+2} \label{measure ball}%
\end{equation}
where $\omega:=\left\vert B_{1}\left(  0\right)  \right\vert >0$. Identity
\eqref{measure ball} illustrates the role of $Q+2$ as the \emph{homogeneous
dimension} of $\mathbb{R}^{N+1}$ (w.r.t.\thinspace the dilations $D(\lambda)$).

\item Property (\ref{measure ball}) in particular implies that \emph{a global
doubling condition holds}:%
\begin{equation}
\left\vert B_{2r}\left(  \xi\right)  \right\vert \leq c\left\vert B_{r}\left(
\xi\right)  \right\vert \label{doubling}%
\end{equation}
for some constant $c>0$, every $\xi\in\mathbb{R}^{N+1}$ and $r>0$. In turn,
this fact implies that $\mathbb{R}^{N+1}$, endowed with the quasidistance $d$
(or the equivalent control distance $d_{X}$) and the Lebesgue measure is a
\emph{space of homogeneous type}, in the sense of Coifman-Weiss \cite{CW}.
This will be a key point, in order to apply several deep known results from
real analysis.

\item Another consequence of (\ref{measure ball}) that we will apply in the
following is the inequality%
\begin{equation}
\left\vert B_{kr}\left(  \xi\right)  \right\vert \leq ck^{Q+2}\left\vert
B_{r}\left(  \xi\right)  \right\vert \label{growth balls}%
\end{equation}
for every $\xi\in\mathbb{R}^{N+1},r>0,k\geq1$.
\end{enumerate}
\end{remark}

\bigskip

The quasidistance $d$ allows us to define the function spaces which will be
used throughout the paper.

\begin{definition}
\label{Def VMO}For any $f\in L_{loc}^{1}\left(  \mathbb{R}^{N+1}\right)  $ we
define the (partial) $VMO_{x}$ modulus of $f$ as the function%
\[
\eta_{f}\left(  r\right)  =\sup_{\xi_{0}\in\mathbb{R}^{N+1},\rho\leq r}%
\frac{1}{\left\vert B_{\rho}\left(  \xi_{0}\right)  \right\vert }\int%
_{B_{\rho}\left(  \xi_{0}\right)  }\left\vert f\left(  x,t\right)  -f\left(
\cdot,t\right)  _{B_{\rho}\left(  x_{0},t_{0}\right)  }\right\vert dxdt,
\]
for any $r>0,$ where, throughout the following, we let%
\begin{equation}
f\left(  \cdot,t\right)  _{B}=\frac{1}{\left\vert B\right\vert }\int%
_{B}f\left(  y,t\right)  dyds. \label{partial mean}%
\end{equation}
We say that $f\in BMO_{x}\left(  \mathbb{R}^{N+1}\right)  $ if $\eta_{f}$ is
bounded; we say that $f\in VMO_{x}\left(  \mathbb{R}^{N+1}\right)  $ if,
moreover, $\eta_{f}\left(  r\right)  \rightarrow0$ as $r\rightarrow0^{+}$.
\end{definition}

Note that in (\ref{partial mean}) we compute the integral average, over a ball
of $\mathbb{R}^{N+1}$, of the function $f\left(  \cdot,t\right)  $ which only
depends on the space variable in $\mathbb{R}^{N}$ (since $t$ is fixed).
Nevertheless, due to the nontrivial structure of metric balls, this quantity
cannot be rewritten as an integral average over some fixed subset of
$\mathbb{R}^{N}$. This phenomenon is different from what happens in the
parabolic case dealt in \cite{K}.

Note also that if $f\in L^{\infty}\left(  \mathbb{R}^{N+1}\right)  $ then
obviously $f\in BMO_{x}\left(  \mathbb{R}^{N+1}\right)  $ with $\eta
_{f}\left(  r\right)  \leq2\left\Vert f\right\Vert _{L^{\infty}\left(
\mathbb{R}^{n}\right)  }$. Our last assumption on the variable coefficients
$a_{ij}$ will be the following:

\begin{itemize}
\item[\textbf{(H3)}] We ask that the coefficients $a_{ij}$ in (\ref{L}) belong
to $VMO_{x}\left(  \mathbb{R}^{N+1}\right)  $.
\end{itemize}

Letting, for any $R>0,$
\begin{equation}
a^{\sharp}\left(  R\right)  =\max_{i,j=1,...,q}\eta_{a_{ij}}\left(  R\right)
, \label{mod VMO coeff}%
\end{equation}
our bounds will depend quantitatively on the coefficients through the function
$a^{\sharp}$ and the number $\nu$ in (\ref{nu}).

Let us now introduce the Sobolev spaces related to our system of H\"{o}rmander
vector fields, fixing the related notation (see \cite[Chap.2]{BBbook} for details).

\begin{definition}
[Sobolev spaces]\label{Def Sobolev}Under the above assumptions, for any
$p\in\left[  1,\infty\right]  $, $k=1,2$, and domain $\Omega\subseteq
\mathbb{R}^{N+1}$, we define the Sobolev space%
\[
W_{X}^{k,p}\left(  \Omega\right)  =\left\{  f:\Omega\rightarrow\mathbb{R}%
:\left\Vert f\right\Vert _{W_{X}^{k,p}\left(  \Omega\right)  }<\infty\right\}
\]
where%
\begin{align*}
\left\Vert f\right\Vert _{W_{X}^{1,p}\left(  \Omega\right)  }  &  =\left\Vert
f\right\Vert _{L^{p}\left(  \Omega\right)  }+\sum_{i=1}^{q}\left\Vert
\partial_{x_{i}}f\right\Vert _{L^{p}\left(  \Omega\right)  }\\
\left\Vert f\right\Vert _{W_{X}^{2,p}\left(  \Omega\right)  }  &  =\left\Vert
f\right\Vert _{W_{X}^{1,p}\left(  \Omega\right)  }+\sum_{i,j=1}^{q}\left\Vert
\partial_{x_{i}x_{j}}^{2}f\right\Vert _{L^{p}\left(  \Omega\right)
}+\left\Vert Yf\right\Vert _{L^{p}\left(  \Omega\right)  }%
\end{align*}
and all the derivatives are meant in weak sense.
\end{definition}

We are finally in position to state our first main result.

\begin{theorem}
[Global Sobolev estimates]\label{Thm main a priori estimates}Let $\mathcal{L}$
be an operator as in \eqref{L}, and assume that \emph{\textbf{(H1)},
\textbf{(H2)}, \textbf{(H3)}} hold. Then, for every $p\in\left(
1,\infty\right)  $ there exists a constant $c>0$, depending on $p$, the matrix
$B$ in (\ref{B}), the number $\nu$ in (\ref{nu}), and the function $a^{\#}$ in
(\ref{mod VMO coeff}), such that%
\begin{equation}
\left\Vert u\right\Vert _{W_{X}^{2,p}\left(  \mathbb{R}^{N+1}\right)  }\leq
c\left\{  \Vert\mathcal{L}u\Vert_{L^{p}\left(  \mathbb{R}^{N+1}\right)
}+\Vert u\Vert_{L^{p}\left(  \mathbb{R}^{N+1}\right)  }\right\}
\label{eq:globalW2pMain}%
\end{equation}
{for every function }$u\in W_{X}^{2,p}\left(  \mathbb{R}^{N+1}\right)  $.
\end{theorem}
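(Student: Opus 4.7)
The plan is to reduce the global estimate to a local $L^{p}$ bound for smooth functions with sufficiently small support, and then glue these local bounds via cutoffs and a covering argument. The local ingredient is the output of Section~\ref{Sec operators a(x,t)}: the sharp-maximal-function estimate on $\partial_{x_{i}x_{j}}^{2}u$ in terms of $\mathcal{L}u$, combined with the Fefferman--Stein inequality in the space of homogeneous type $(\mathbb{R}^{N+1},d,dx\,dt)$, yields for every $u\in C_{0}^{\infty}(\mathbb{R}^{N+1})$ with $\mathrm{supp}(u)\subset B_{r_{0}}(\xi_{0})$ an inequality of the schematic form
\[
\sum_{i,j=1}^{q}\|\partial_{x_{i}x_{j}}^{2}u\|_{L^{p}}\le C\,a_{r_{0}}^{\sharp}\sum_{i,j=1}^{q}\|\partial_{x_{i}x_{j}}^{2}u\|_{L^{p}}+C\bigl\{\|\mathcal{L}u\|_{L^{p}}+\|u\|_{L^{p}}\bigr\}.
\]
Assumption \textbf{(H3)} lets me fix $r_{0}$ small enough that $Ca_{r_{0}}^{\sharp}\le 1/2$, so the first term is absorbed on the left; the analogous bound for $Yu$ is then free since $Yu=\mathcal{L}u-\sum a_{ij}\partial_{x_{i}x_{j}}^{2}u$ and the $a_{ij}$ are uniformly bounded.

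To globalize I would fix a reference cutoff $\chi\in C_{0}^{\infty}(B_{r_{0}}(0))$ with $\chi\equiv 1$ on $B_{r_{0}/2}(0)$ and define $\chi_{\xi}(\eta)=\chi(\xi^{-1}\circ\eta)$. Left-invariance of $\partial_{x_{i}}$ and $Y$ with respect to $\circ$, together with the $2$-homogeneity of $Y$ under $D(\lambda)$, yields uniform bounds $|\partial_{x_{i}}\chi_{\xi}|\le C/r_{0}$ and $|\partial_{x_{i}x_{j}}^{2}\chi_{\xi}|+|Y\chi_{\xi}|\le C/r_{0}^{2}$, independently of $\xi$. Applying the local estimate to $v=u\chi_{\xi}$ and using the first-order nature of $Y$ in the expansion
\[
\mathcal{L}(u\chi_{\xi})=\chi_{\xi}\,\mathcal{L}u+u\,\mathcal{L}\chi_{\xi}+2\sum_{i,j=1}^{q}a_{ij}\,\partial_{x_{i}}u\,\partial_{x_{j}}\chi_{\xi},
\]
one controls $\|\partial_{x_{i}x_{j}}^{2}u\|_{L^{p}(B_{r_{0}/2}(\xi))}+\|Yu\|_{L^{p}(B_{r_{0}/2}(\xi))}$ by $\|\mathcal{L}u\|_{L^{p}(B_{r_{0}}(\xi))}$, $r_{0}^{-2}\|u\|_{L^{p}(B_{r_{0}}(\xi))}$ and $r_{0}^{-1}\|\nabla_{X}u\|_{L^{p}(B_{r_{0}}(\xi))}$, where $\nabla_{X}u=(\partial_{x_{1}}u,\ldots,\partial_{x_{q}}u)$. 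I then invoke a Coifman--Weiss-type covering of $\mathbb{R}^{N+1}$ by balls $B_{r_{0}/2}(\xi_{k})$ whose enlargements $B_{r_{0}}(\xi_{k})$ have bounded overlap (the abstract covering result from~\cite{BB VMO} applies thanks to the global doubling~(\ref{doubling})), raise to the $p$-th power and sum over $k$, obtaining
\[
\|\partial_{x_{i}x_{j}}^{2}u\|_{L^{p}}^{p}+\|Yu\|_{L^{p}}^{p}\le C\bigl\{\|\mathcal{L}u\|_{L^{p}}^{p}+r_{0}^{-2p}\|u\|_{L^{p}}^{p}+r_{0}^{-p}\|\nabla_{X}u\|_{L^{p}}^{p}\bigr\}.
\]

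The first-order term is then handled by the classical Euclidean interpolation inequality $\|\partial_{x_{i}}u\|_{L^{p}}\le\varepsilon\,\|\partial_{x_{i}x_{i}}^{2}u\|_{L^{p}}+C_{\varepsilon}\|u\|_{L^{p}}$, applied slicewise in $t$ and integrated; choosing $\varepsilon$ small compared to $r_{0}$ absorbs $r_{0}^{-p}\|\nabla_{X}u\|_{L^{p}}^{p}$ on the left-hand side, while the remaining $\|u\|_{L^{p}}$ contributions merge into the right-hand side of~(\ref{eq:globalW2pMain}) with a constant depending on $p$, $\nu$, $B$ and $a_{\cdot}^{\sharp}$. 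A mollification/density argument then extends the estimate from $C_{0}^{\infty}(\mathbb{R}^{N+1})$ to arbitrary $u\in W_{X}^{2,p}(\mathbb{R}^{N+1})$.

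The main obstacle, in my view, is the tight bookkeeping of the $r_{0}$-dependence across the gluing step: the commutator errors from the cutoff scale like $r_{0}^{-2}$, yet $r_{0}$ has already been fixed small by the $VMO$-absorption, so one must verify both that the interpolation constants are independent of $r_{0}$ and that the overlap of the covering is genuinely uniform in $\xi$. It is equally delicate that only a Euclidean-type interpolation in the $\partial_{x_{i}}$'s is at our disposal (no intrinsic interpolation involving $Y$ is available), which is precisely why it is crucial that the expansion of $\mathcal{L}(u\chi_{\xi})$ produces only $\partial_{x_{i}}u$ error terms and no $Yu$ error terms.
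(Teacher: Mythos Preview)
Your proposal is correct and follows essentially the same route as the paper: the local small-support estimate via the sharp-maximal bound and Fefferman--Stein (Theorem~\ref{Thm local small Lp}), the cutoff family built by left-translation (Lemma~\ref{Lemma cutoff}), the bounded-overlap covering from \cite{BB VMO} (Proposition~\ref{Prop overlapping}), and the Euclidean interpolation~\eqref{eq:interpIneq} together with a density argument are exactly the ingredients used in the proofs of Theorem~\ref{Thm stima globale ver1} and Theorem~\ref{Thm main a priori estimates}. The only cosmetic discrepancies are that in the paper the local estimate~\eqref{final local} carries no $\|u\|_{L^{p}}$ term on the right and the $VMO$ modulus enters as $(a_{R}^{\sharp})^{1/p\beta}$ rather than linearly, and that the paper performs the density step before the interpolation; none of this affects the argument.
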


Our second main result is the well-posedness of the Cauchy problem for
$\mathcal{L}$, that is,
\begin{equation}%
\begin{cases}
\mathcal{L}u=f & \text{in $\Omega_{T}\equiv\mathbb{R}^{N}\times(0,T)$}\\
u(\cdot,0)=g & \text{in $\mathbb{R}^{N},$}%
\end{cases}
\label{eq:PbCauchyL}%
\end{equation}
where $T\in\left(  0,+\infty\right)  $ is fixed, $f\in L^{p}(\Omega_{T})$ and
$g\in W_{X}^{2,p}\left(  \mathbb{R}^{N}\right)  $. By the way, saying that a
function $g$ depending on $x$ alone belongs to $W_{X}^{2,p}\left(
\mathbb{R}^{N}\right)  $, obviously means that it belongs to $W_{X}%
^{2,p}\left(  \Omega_{T}\right)  $ if we regard $g$ as a function of $\left(
x,t\right)  $. We the obvious meaning of norms, we have%
\begin{equation}
\Vert g\Vert_{W_{X}^{2,p}(\Omega_{T})}=T^{\frac{1}{p}}\Vert g\Vert
_{W_{X}^{2,p}(\mathbb{R}^{N})}. \label{g spazio e tempo}%
\end{equation}

In order to give sense to the initial condition in (\ref{eq:PbCauchyL})
avoiding a delicate notion of trace, following \cite{KrylovBook}, let us
introduce in an indirect way the Sobolev spaces of $W_{X}^{2,p}\left(
\Omega_{T}\right)  $ functions \textquotedblleft vanishing for $t=0$%
\textquotedblright:

\begin{definition}
We say that $u\in\mathring{W}_{X}^{2,p}\left(  \Omega_{T}\right)  $ if $u\in
W_{X}^{2,p}\left(  \Omega_{T}\right)  $ and the function $\widetilde{u}$,
obtained from $u$ extending it as zero for $t<0$, belongs to $W_{X}%
^{2,p}\left(  S_{T}\right)  $. Functions in $\mathring{W}_{X}^{2,p}\left(
\Omega_{T}\right)  $ will be thought as defined either on $\Omega_{T}$ or on
$S_{T}$ (vanishing for $t<0$).
\end{definition}

We can now give the precise definition of \emph{solution} to the Cauchy problem.

\begin{definition}
\label{def:SolCauchypb}Let $f\in L^{p}(\Omega_{T})$ and $g\in W_{X}%
^{2,p}\left(  \mathbb{R}^{N}\right)  $ for some $p\in\left(  1,\infty\right)
$ and $T\in\left(  0,+\infty\right)  $. We say that a function $u\in
W_{X}^{2,p}(\Omega_{T})$ is a solution to problem (\ref{eq:PbCauchyL}) if

\begin{enumerate}
\item $\mathcal{L}u=f$ a.e.\thinspace\ in $\Omega_{T}$;

\item $u-g\in\mathring{W}_{X}^{2,p}\left(  \Omega_{T}\right)  $.
\end{enumerate}
\end{definition}

Then, our second main result is the following:

\begin{theorem}
[Well-posedness of the Cauchy problem]\label{thm:ExistenceCauchy}Under the
same assumptions on $\mathcal{L}$ of Theorem \ref{Thm main a priori estimates}%
, let $f\in L^{p}(\Omega_{T})$ and $g\in W_{X}^{2,p}\left(  \mathbb{R}%
^{N}\right)  $ for some $p\in\left(  1,\infty\right)  $ and $T\in\left(
0,+\infty\right)  $. Then, there exists a unique solution $u\in W_{X}%
^{2,p}(\Omega_{T})$ of the Cauchy problem (\ref{eq:PbCauchyL}). Moreover, the
following estimate holds%
\begin{equation}
\Vert u\Vert_{W_{X}^{2,p}(\Omega_{T})}\leq c\,\left\{  \left\Vert f\right\Vert
_{L^{p}(\Omega_{T})}+\Vert g\Vert_{W_{X}^{2,p}(\mathbb{R}^{N})}\right\}  ,
\label{eq:estimateSolCauchy}%
\end{equation}
where $c$ depends on $p$, $T$, the matrix $B$ in \eqref{B}, the number $\nu$
in \eqref{nu}, and the function $a^{\#}$ in \eqref{mod VMO coeff}.
\end{theorem}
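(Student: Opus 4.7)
The plan is to reduce the Cauchy problem to the solvability theorem on the infinite strip $S_T$ (Theorem~\ref{thm:SolvabilityKlambda}), via two standard tricks: subtracting off the initial datum, and absorbing the extra $-\lambda u$ term through an exponential rescaling in time.

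\textbf{Step 1 (Reduction to zero initial datum).} Regarding $g$ as a $t$-independent function on $\Omega_T$, the hypothesis $g\in W_X^{2,p}(\mathbb{R}^N)$ together with (\ref{g spazio e tempo}) gives $g\in W_X^{2,p}(\Omega_T)$ and, since $\partial_t g=0$ and the $a_{ij},b_{jk}x_k$ are locally bounded on $\Omega_T$,
\[
F:=f-\mathcal{L}g\in L^p(\Omega_T),\qquad \|F\|_{L^p(\Omega_T)}\le \|f\|_{L^p(\Omega_T)}+c\,T^{1/p}\|g\|_{W_X^{2,p}(\mathbb{R}^N)}.
\]
A function $u$ solves (\ref{eq:PbCauchyL}) in the sense of Definition~\ref{def:SolCauchypb} if and only if $v:=u-g\in \mathring W_X^{2,p}(\Omega_T)$ satisfies $\mathcal{L}v=F$ in $\Omega_T$, so we may assume $g=0$.

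\textbf{Step 2 (Exponential rescaling and extension).} A direct computation shows that if $v=e^{\lambda t}w$, then
\[
\mathcal{L}v=e^{\lambda t}\bigl(\mathcal{L}w-\lambda w\bigr),
\]
so $\mathcal{L}v=F$ is equivalent to $\mathcal{L}w-\lambda w=e^{-\lambda t}F$. Extend the right-hand side by zero for $t\le 0$ to obtain $\widetilde F\in L^p(S_T)$, and choose $\lambda>0$ sufficiently large so that Theorem~\ref{thm:SolvabilityKlambda} applies. This produces a unique $w\in W_X^{2,p}(S_T)$ with
\[
\mathcal{L}w-\lambda w=e^{-\lambda t}\widetilde F\quad\text{in }S_T,\qquad \|w\|_{W_X^{2,p}(S_T)}\le c\,\|F\|_{L^p(\Omega_T)}.
\]

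\textbf{Step 3 (Vanishing for $t\le 0$).} The key observation is that $\widetilde F\equiv 0$ on $S_0=\mathbb{R}^N\times(-\infty,0)$, so $w|_{S_0}\in W_X^{2,p}(S_0)$ satisfies $\mathcal{L}w-\lambda w=0$. Applying the uniqueness part of Theorem~\ref{thm:SolvabilityKlambda} on $S_0$ (which is of the same form as $S_T$), we conclude $w\equiv 0$ on $S_0$. Consequently $v:=e^{\lambda t}w$ restricted to $\Omega_T$ extends by zero to a $W_X^{2,p}(S_T)$ function, so $v\in \mathring W_X^{2,p}(\Omega_T)$, and $\mathcal{L}v=F$ in $\Omega_T$. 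Setting $u=v+g$ finishes the existence part, and the estimate (\ref{eq:estimateSolCauchy}) follows from
\[
\|u\|_{W_X^{2,p}(\Omega_T)}\le e^{\lambda T}\|w\|_{W_X^{2,p}(S_T)}+\|g\|_{W_X^{2,p}(\Omega_T)}\le c\bigl\{\|f\|_{L^p(\Omega_T)}+\|g\|_{W_X^{2,p}(\mathbb{R}^N)}\bigr\},
\]
after using (\ref{g spazio e tempo}) and the bound on $\|F\|_{L^p}$ from Step~1.

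\textbf{Step 4 (Uniqueness).} If $u_1,u_2$ are two solutions, then $u:=u_1-u_2\in\mathring W_X^{2,p}(\Omega_T)$ satisfies $\mathcal{L}u=0$ in $\Omega_T$ and $u\equiv 0$ on $S_0$. Set $w:=e^{-\lambda t}u$ (extended by zero to $t\le 0$); by the same computation as in Step~2, $w\in W_X^{2,p}(S_T)$ with $\mathcal{L}w-\lambda w=0$ in $S_T$. The uniqueness clause of Theorem~\ref{thm:SolvabilityKlambda} then forces $w=0$, hence $u_1=u_2$. The main obstacle in this whole scheme is really just the bookkeeping in Step~3: one must check that the uniqueness on $S_0$ applies to the restriction of a function built on the strictly larger strip $S_T$, which is immediate once Theorem~\ref{thm:SolvabilityKlambda} is proved uniformly in the endpoint $T\in(-\infty,+\infty]$, as announced in Section~\ref{subsec refined}.
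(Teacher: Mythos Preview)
Your argument is essentially identical to the paper's: reduce to $g=0$ by subtracting the datum, use the exponential weight $e^{\lambda t}$ to trade the Cauchy problem for $\mathcal{L}v=F$ against the equation $\mathcal{L}w-\lambda w=\widetilde F$ on $S_T$, and invoke the strip solvability/uniqueness (first on $S_0$ to get vanishing for $t\le 0$, then on $S_T$ for uniqueness). Two small slips to fix: you should be citing Theorem~\ref{thm:SolvabilityLlambda} (solvability for $\mathcal{L}-\lambda$), not Theorem~\ref{thm:SolvabilityKlambda} (which is only for the constant-coefficient $\mathcal{K}$), and in Step~2 the equation should read $\mathcal{L}w-\lambda w=\widetilde F$, not $e^{-\lambda t}\widetilde F$.
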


We end this Section introducing some more known facts from real analysis in
spaces of homogeneous type. We will use in the following two different kinds
of \emph{maximal functions}.

\begin{definition}
\label{Def Maximal}For $f\in L_{loc}^{1}\left(  \mathbb{R}^{N+1}\right)  $,
$\xi\in\mathbb{R}^{N+1}$, we define the \emph{Hardy-Littlewood}
(uncentered)\emph{ maximal function} as:
\begin{equation}
\mathcal{M}f\left(  \xi\right)  =\sup_{\substack{B_{r}\left(  \overline{\xi
}\right)  \ni\xi\\\overline{\xi}\in\mathbb{R}^{N+1},r>0}}\frac{1}{\left\vert
B_{r}\left(  \overline{\xi}\right)  \right\vert }\int_{B_{r}\left(
\overline{\xi}\right)  }\left\vert f\left(  \eta\right)  \right\vert d\eta.
\label{maximal HL}%
\end{equation}

\end{definition}

Since $\left(  \mathbb{R}^{N+1},d,\left\vert \cdot\right\vert \right)  $ is a
space of homogeneous type, by \cite[Thm.2.1]{CW}, we have:

\begin{theorem}
\label{Thm Maximal}For every $p\in(1,\infty]$ there exists $c>0$ such that,
for every $f\in L^{p}\left(  \mathbb{R}^{N+1}\right)  $,%
\begin{equation}
\left\Vert \mathcal{M}f\right\Vert _{L^{p}\left(  \mathbb{R}^{N+1}\right)
}\leq c_{p}\left\Vert f\right\Vert _{L^{p}\left(  \mathbb{R}^{N+1}\right)  }.
\label{HL ineq}%
\end{equation}

\end{theorem}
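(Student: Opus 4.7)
The statement is the standard $L^p$ boundedness of the Hardy-Littlewood maximal operator in a space of homogeneous type, so my plan is essentially to invoke the classical Coifman-Weiss machinery, checking that the hypotheses are met in our quasimetric setting. The ingredients already in place are exactly what is needed: by Remark \ref{rem:propd}, $d$ is a (quasisymmetric) quasidistance satisfying the quasitriangle inequality (\ref{quasitriangle}) with constant $\boldsymbol{\kappa}$, and the Lebesgue measure on $\mathbb{R}^{N+1}$ satisfies the global doubling condition (\ref{doubling}). Thus $(\mathbb{R}^{N+1},d,|\cdot|)$ is a space of homogeneous type in the sense of Coifman-Weiss \cite{CW}, and Theorem 2.1 of \cite{CW} applies verbatim, yielding (\ref{HL ineq}).

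For completeness, I would recall the structure of that argument. First, one establishes the weak-type $(1,1)$ inequality
\[
\bigl|\{\xi\in\mathbb{R}^{N+1}:\mathcal{M}f(\xi)>\alpha\}\bigr|\le \frac{c}{\alpha}\,\|f\|_{L^{1}(\mathbb{R}^{N+1})},\qquad \alpha>0,
\]
by a Vitali-type selection: from an arbitrary cover of the level set by balls $B_{r}(\overline\xi)$ satisfying $\frac{1}{|B_{r}(\overline\xi)|}\int_{B_{r}(\overline\xi)}|f|>\alpha$, one extracts a pairwise disjoint subfamily such that the expanded balls $B_{K r}(\overline\xi)$, with $K$ depending only on $\boldsymbol{\kappa}$, still cover the set. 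The doubling estimate (\ref{growth balls}) then controls $|B_{Kr}(\overline\xi)|$ by $c_K|B_r(\overline\xi)|$, so summing the defining inequalities over the disjoint subfamily produces the weak $(1,1)$ bound. The only point requiring care is that $d$ is only a quasidistance and is not symmetric, but the Coifman-Weiss covering argument is designed exactly for this generality; the uncentered definition in (\ref{maximal HL}) (which takes the sup over all balls containing $\xi$, not only balls centered at $\xi$) is compatible with quasisymmetry.

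Second, the trivial $L^\infty$ bound $\|\mathcal{M}f\|_{L^\infty}\le \|f\|_{L^\infty}$ is immediate from the definition. Marcinkiewicz interpolation between the weak-type $(1,1)$ estimate and the $L^\infty\to L^\infty$ bound then yields the $L^p\to L^p$ bound for every $p\in(1,\infty)$, with a constant of the form $c_p=c\,p/(p-1)$, while the $p=\infty$ case is the trivial bound itself.

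The only obstacle, if one were rebuilding the proof from scratch rather than citing \cite{CW}, would be the Vitali-type selection in the quasimetric (as opposed to metric) setting; but since $\boldsymbol{\kappa}$ is a fixed structural constant and (\ref{growth balls}) gives polynomial control of $|B_{kr}|/|B_r|$, this is a purely formal adaptation of the classical Euclidean argument. No further input from the KFP structure is required.
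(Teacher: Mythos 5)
Your proposal is correct and follows exactly the route the paper takes: the result is obtained by observing that $(\mathbb{R}^{N+1},d,|\cdot|)$ is a space of homogeneous type (doubling condition \eqref{doubling}) and citing the Coifman--Weiss maximal theorem \cite[Thm.\,2.1]{CW}. Your additional sketch of the Vitali-type covering argument, weak $(1,1)$ bound and Marcinkiewicz interpolation is the standard content of that cited theorem and is accurate.
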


Another kind of maximal function, which can also be introduced in any space of
homogeneous type, is the following:

\begin{definition}
\label{Def sharp}For $f\in L_{loc}^{1}\left(  \mathbb{R}^{N+1}\right)  $,
$\xi\in\mathbb{R}^{N+1}$, we define the \emph{sharp maximal function }of $f$
as:%
\begin{equation}
f^{\#}\left(  \xi\right)  =\sup_{\substack{B_{r}\left(  \overline{\xi}\right)
\ni\xi\\\overline{\xi}\in\mathbb{R}^{N+1},r>0}}\frac{1}{\left\vert
B_{r}\left(  \overline{\xi}\right)  \right\vert }\int_{B_{r}\left(
\overline{\xi}\right)  }\left\vert f\left(  \eta\right)  -f_{B_{r}\left(
\overline{\xi}\right)  }\right\vert d\eta, \label{sharp maximal}%
\end{equation}
where%
\[
f_{B_{r}(\overline{\xi})}=\frac{1}{\left\vert B_{r}\left(  \overline{\xi
}\right)  \right\vert }\int_{B_{r}\left(  \overline{\xi}\right)  }f\left(
\eta\right)  d\eta.
\]

\end{definition}

Applying to our context the result proved in \cite[Prop.3.4]{PS} in the
general setting of spaces of homogeneous type of infinite measure, we have the
following result, generalizing the well-known Fefferman-Stein inequality which
holds in Euclidean spaces:

\begin{theorem}
\label{Thm Fefferman Stein}For every $p\in\left(  1,\infty\right)  $ there
exists $C_{p}$ (depending on $p$ and the doubling constant in (\ref{doubling}%
)) such that for every $f\in L^{\infty}\left(  \mathbb{R}^{N+1}\right)  $, $f$
with bounded support, we have%
\begin{equation}
\left\Vert f\right\Vert _{L^{p}\left(  \mathbb{R}^{N+1}\right)  }\leq
C_{p}\left\Vert f^{\#}\right\Vert _{L^{p}\left(  \mathbb{R}^{N+1}\right)  }.
\label{Fefferman Stein}%
\end{equation}

\end{theorem}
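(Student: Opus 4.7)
The inequality (\ref{Fefferman Stein}) is the Fefferman--Stein bound in an abstract space of homogeneous type, and since Remark \ref{rem:propd}(5)--(6) exhibits $(\mathbb{R}^{N+1},d,dx)$ as a space of homogeneous type of infinite measure, I would prove it by the standard good-$\lambda$ template adapted to the quasi-metric setting. First I would reduce to the Hardy--Littlewood maximal function: the Lebesgue differentiation theorem, valid in any space of homogeneous type, gives $|f|\le\mathcal{M}f$ a.e., so it suffices to show
\[
\|\mathcal{M}f\|_{L^{p}(\mathbb{R}^{N+1})}\le C_{p}'\|f^{\#}\|_{L^{p}(\mathbb{R}^{N+1})}.
\]
The hypothesis that $f\in L^{\infty}$ has bounded support makes $f\in L^{p}$, whence $\|\mathcal{M}f\|_{L^{p}}<\infty$ by Theorem \ref{Thm Maximal}: a piece of information that will be crucial for an absorption step below.

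The heart of the argument is a \emph{good-$\lambda$ inequality}: there exist structural constants $C_{0},\gamma_{0}>0$, depending only on $\boldsymbol{\kappa}$ and on the doubling constant of (\ref{doubling}), such that for every $\lambda>0$ and $\gamma\in(0,\gamma_{0})$,
\[
\bigl|\{\mathcal{M}f>2\lambda,\ f^{\#}\le\gamma\lambda\}\bigr|\le C_{0}\gamma\,\bigl|\{\mathcal{M}f>\lambda\}\bigr|.
\]
To prove it, I would apply a Whitney/Calderón--Zygmund decomposition of the open set $\{\mathcal{M}f>\lambda\}$ into disjoint Christ-type dyadic pieces $\{Q_{j}\}$, each contained in a ball $B_{j}$ of comparable radius whose enlarged parent $B_{j}^{\ast}$ (dilation factor chosen in terms of $\boldsymbol{\kappa}$) still meets the complement of $\{\mathcal{M}f>\lambda\}$, so that $|f_{B_{j}^{\ast}}|\lesssim\lambda$ by the Whitney selection. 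If $Q_{j}$ contains a point $\xi_{j}$ with $f^{\#}(\xi_{j})\le\gamma\lambda$, then any $\eta\in Q_{j}$ with $\mathcal{M}f(\eta)>2\lambda$ necessarily satisfies $\mathcal{M}\bigl((f-f_{B_{j}^{\ast}})\mathbf{1}_{B_{j}^{\ast}}\bigr)(\eta)>c\lambda$; the weak-$(1,1)$ bound for $\mathcal{M}$ then combines with the definition of $f^{\#}$ at $\xi_{j}$ and with doubling to give
\[
\bigl|\{\mathcal{M}f>2\lambda\}\cap Q_{j}\bigr|\le\frac{C}{\lambda}\int_{B_{j}^{\ast}}|f-f_{B_{j}^{\ast}}|\,d\eta\le C_{0}\gamma|Q_{j}|.
\]
Summing over $j$ yields the good-$\lambda$ estimate.

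Multiplying this good-$\lambda$ inequality by $p\lambda^{p-1}$, integrating in $\lambda$ and splitting $\{\mathcal{M}f>2\lambda\}$ according to whether $f^{\#}\le\gamma\lambda$ or not, I obtain
\[
\|\mathcal{M}f\|_{L^{p}}^{p}\le 2^{p}C_{0}\gamma\,\|\mathcal{M}f\|_{L^{p}}^{p}+2^{p}\gamma^{-p}\|f^{\#}\|_{L^{p}}^{p}.
\]
Choosing $\gamma$ so small that $2^{p}C_{0}\gamma\le 1/2$ and absorbing the first summand on the left---permissible precisely because $\|\mathcal{M}f\|_{L^{p}}<\infty$---yields (\ref{Fefferman Stein}).

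The main obstacle is the Whitney/Christ decomposition in this quasi-metric setting: the asymmetry of $d$ and the mere quasi-triangle inequality (\ref{quasitriangle}) force working with suitably dilated parent balls, and the geometric constants must be tracked through the doubling bound (\ref{growth balls}) so that they end up absorbed in $C_{0}$. Additional care is needed to ensure that in the infinite-measure regime the level sets $\{\mathcal{M}f>\lambda\}$ admit a genuine Whitney tiling and that no mass escapes to infinity; all of this is carried out in full generality in \cite[Prop.~3.4]{PS}, whose hypotheses are met by $(\mathbb{R}^{N+1},d,dx)$, so the statement follows by direct citation.
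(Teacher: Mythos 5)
Your proposal is correct and ultimately coincides with the paper's treatment: the paper gives no internal proof of Theorem \ref{Thm Fefferman Stein}, deducing it directly from \cite[Prop.\,3.4]{PS} for spaces of homogeneous type of infinite measure, exactly as you do in your final paragraph. The good-$\lambda$ sketch you supply (reduction to $\mathcal{M}f$ via Lebesgue differentiation, Whitney/Christ decomposition of $\{\mathcal{M}f>\lambda\}$, and the absorption step justified by $f\in L^{\infty}$ with bounded support) is a faithful outline of the standard argument behind that cited result, so it adds detail rather than diverging from the paper.
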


Let us recall also the following abstract result proved in \cite[Thm.3.10]{BB
VMO}, which will be crucial for us:

\begin{theorem}
\label{Thm abstract sing int}Let $\left(  X,d,\mu\right)  $ be a space of
homogeneous type and let $T$ be a singular integral operator which we already
know to be bounded on $L^{p}\left(  X\right)  $ for some $p\in\left(
1,\infty\right)  $. The operator $T$ has kernel $K\left(  x,y\right)  $, which
means that%
\[
Tf\left(  x\right)  =\int_{X}K\left(  x,y\right)  f\left(  y\right)
d\mu\left(  y\right)
\]
when $f$ is compactly supported and $x$ does not belong to
$\operatorname{sprt}f.$ We assume that the kernel $K$ satisfies the mean value
inequality%
\begin{equation}
\left\vert K\left(  x_{0},y\right)  -K\left(  x,y\right)  \right\vert \leq
C\frac{d\left(  x_{0},x\right)  }{d\left(  x_{0},y\right)  B\left(
x;y\right)  }\text{ if }d\left(  x_{0},y\right)  \geq Md\left(  x_{0}%
,x\right)  \label{mean value ineq}%
\end{equation}
where
\[
B\left(  x;y\right)  =\mu\left(  B\left(  x,d\left(  x,y\right)  \right)
\right)
\]
and $M>1$ is such that the condition $d\left(  x_{0},y\right)  \geq Md\left(
x_{0},x\right)  $ implies the equivalence of $d\left(  x_{0},y\right)  $ and
$d\left(  x,y\right)  $ (if $d$ is a distance, like in our case, $M=2$ is a
good choice). Let $\beta>1$ be an exponent such that%
\begin{equation}
\left\vert B\left(  x,kr\right)  \right\vert \leq ck^{\beta}\left\vert
B\left(  x,r\right)  \right\vert \label{growth condition}%
\end{equation}
for every $k\geq1,r>0,x\in X.$

Then, there exists $c>0$ such that for every $f\in L^{p}\left(  X\right)  $,
$x_{0}\in X$, ball $B_{r}=B\left(  \overline{x},r\right)  \ni x_{0}$ (for some
$r>0$, $\overline{x}\in X$), $k\geq M$, we have:%
\begin{align*}
&  \frac{1}{\left\vert B_{r}\left(  \overline{x}\right)  \right\vert }%
\int_{B_{r}}\left\vert Tf\left(  x\right)  -\left(  Tf\right)  _{B_{r}\left(
\overline{x}\right)  }\right\vert d\mu\left(  x\right) \\
&  \leq c\left\{  \frac{1}{k}\mathcal{M}f\left(  x_{0}\right)  +k^{\frac
{\beta}{p}}\left(  \frac{1}{\left\vert B_{kr}\left(  \overline{x}\right)
\right\vert }\int_{B_{kr}\left(  \overline{x}\right)  }\left\vert f\left(
x\right)  \right\vert ^{p}d\mu\left(  x\right)  \right)  ^{1/p}\right\}  .
\end{align*}

\end{theorem}

Finally, we have to define some function spaces of H\"{o}lder or partially
H\"{o}lder continuous functions which will be useful in the following.

\begin{definition}
\label{def:Holderspacesd} Let $f:\mathbb{R}^{N+1}\rightarrow\mathbb{R}$. Given
any number $\alpha\in(0,1)$, we introduce the notation:
\begin{align*}
|f|_{C^{\alpha}(\mathbb{R}^{N+1})}  &  =\sup\left\{  \frac{|f(\xi)-f(\eta
)|}{d(\xi,\eta)^{\alpha}}:\,\text{$\xi,\eta\in$}\mathbb{R}^{N+1}\text{ and
$\xi\neq\eta$}\right\} \\[0.15cm]
\left\vert f\right\vert _{C_{x}^{\alpha}(\mathbb{R}^{N+1})}  &
=\underset{t\in\mathbb{R}}{\operatorname{esssup}}\left\{  \frac
{|f(x,t)-f(y,t)|}{d((x,t),(y,t))^{\alpha}}:\,\text{$x,y\in$ }\mathbb{R}%
^{N},\text{$x\neq y$}\right\} \\
&  =\underset{t\in\mathbb{R}}{\operatorname{esssup}}\left\{  \frac
{|f(x,t)-f(y,t)|}{\Vert x-y\Vert^{\alpha}}:\,\text{$x,y\in$ }\mathbb{R}%
^{N},\text{$x\neq y$}\right\}
\end{align*}
(where the last equality holds by (\ref{d stesso t})). Accordingly, we define
the spaces $C^{\alpha}(\mathbb{R}^{N+1})$ and $C_{x}^{\alpha}(\mathbb{R}%
^{N+1})$ as follows:
\begin{align}
&  C^{\alpha}(\mathbb{R}^{N+1}):=\left\{  f\in C(\mathbb{R}^{N+1})\cap
L^{\infty}(\mathbb{R}^{N+1}):\,|f|_{C^{\alpha}(\mathbb{R}^{N+1})}%
<\infty\right\} \label{eq:defCalfa}\\[0.04in]
&  C_{x}^{\alpha}(\mathbb{R}^{N+1}):=\left\{  f\in L^{\infty}(\mathbb{R}%
^{N+1}):\,|f|_{C_{x}^{\alpha}(\mathbb{R}^{N+1})}<\infty\right\}  .
\label{eq:defCalfax}%
\end{align}
Finally, we define the spaces%
\begin{align}
C_{0}^{\alpha}(\mathbb{R}^{N+1})  &  :=\left\{  f\in C^{\alpha}(\mathbb{R}%
^{N+1}):f\text{ is compactly supported in }\mathbb{R}^{N+1}\right\}
\label{defCalfa0}\\
\mathcal{D}_{x}^{\alpha}(\mathbb{R}^{N+1})  &  :=\left\{  f\in C_{x}^{\alpha
}(\mathbb{R}^{N+1}):f\text{ is compactly supported in }\mathbb{R}%
^{N+1}\right\}  . \label{defDalfax}%
\end{align}

\end{definition}

\section{Operators with measurable coefficients $a_{ij}(t)$%
\label{sec operators t}}

In this Section we consider a KFP operator $\overline{\mathcal{L}}$ of the
form \eqref{L} but with coefficients $a_{ij}$ \emph{only depending on $t$},
that is,
\begin{equation}
\overline{\mathcal{L}}u=\sum_{i,j=1}^{q}a_{ij}(t)\partial_{x_{i}x_{j}}%
^{2}u+Yu,\qquad(x,t)\in\mathbb{R}^{N+1}, \label{eq:LLsolot}%
\end{equation}
satisfying assumptions \textbf{(H1)}\thinspace-\thinspace\textbf{(H2)} in
Section \ref{sec inizio}. For this operator, the goal of this Section is to
prove an estimate on the mean oscillation of $\partial_{x_{i}x_{j}}^{2}u$ over
balls, in terms of $\overline{\mathcal{L}}u$.\ This will be the key result
that we will exploit, in Section \ref{Sec operators a(x,t)}, to get the
desired Sobolev estimates for operators with partially $VMO$ coefficients
$a_{ij}\left(  x,t\right)  $. The result is the following, and will be proved
at the end of Section \ref{subsec oscillation}:

\begin{theorem}
\label{Thm Krylov main step} Let assumptions \textbf{\emph{(H1)}%
}-\textbf{\emph{(H2)}} be in force. Then, for every $p\in(1,\infty)$ there
exists $c>0$ such that, for every $u\in C_{0}^{\infty}(\mathbb{R}^{N+1})$,
$r>0$, $\xi_{0}=(x_{0},t_{0})$ and $\bar{\xi}=(\overline{x},\overline{t}%
)\in\mathbb{R}^{N+1}$ such that $\xi_{0}\in B_{r}(\bar{\xi})$, every $1\leq
i,j\leq q$ and $k\geq4\boldsymbol{\kappa}$ (where $\boldsymbol{\kappa}$ is the
constant in (\ref{quasitriangle})), we have the following estimate:%
\begin{equation}%
\begin{split}
&  \frac{1}{|B_{r}(\bar{\xi})|}\int_{B_{r}(\bar{\xi})}|\partial_{x_{i}x_{j}%
}^{2}u(x,t)-(\partial_{x_{i}x_{j}}^{2}u)_{B_{r}(\bar{\xi})}|\,dxdt\\
&  \qquad\leq c\Big\{\frac{1}{k}\mathcal{M}(\overline{\mathcal{L}}u)(\xi
_{0})\\
&  \qquad\qquad+k^{\frac{Q+2}{p}}\left(  \frac{1}{\left\vert B_{kr}\left(
\overline{\xi}\right)  \right\vert }\int_{B_{kr}\left(  \overline{\xi}\right)
}\left\vert \overline{\mathcal{L}}u(x,t)\right\vert ^{p}\,dx\,dt\right)
^{1/p}\Big\}.
\end{split}
\label{Krylov}%
\end{equation}
The constant $c$ depends on the matrix $A$ only through the number $\nu$ in
\eqref{nu}; moreover, $\mathcal{M}$ is the Hardy-Littlewood maximal operator
defined in \eqref{maximal HL}.
\end{theorem}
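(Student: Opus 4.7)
My plan is to follow a Krylov-style splitting $u=v+w$, with $v$ carrying the part of the source supported in the larger ball $B_{kr}(\bar\xi)$ and $w$ the remainder satisfying the homogeneous equation there, and to control the mean oscillation of $\partial^2_{x_ix_j}u$ on $B_r(\bar\xi)$ by the sum of the mean oscillations of $\partial^2_{x_ix_j}v$ and $\partial^2_{x_ix_j}w$. Concretely, using the representation $u(\xi)=-\int\Gamma(\xi,\eta)\,\overline{\mathcal{L}}u(\eta)\,d\eta$ valid for $u\in C_0^\infty(\mathbb{R}^{N+1})$, I would set
\[
v(\xi)=-\int\Gamma(\xi,\eta)\,\chi_{B_{kr}(\bar\xi)}(\eta)\,\overline{\mathcal{L}}u(\eta)\,d\eta,\qquad w=u-v,
\]
so that $\overline{\mathcal{L}}w=0$ on $B_{kr}(\bar\xi)$. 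The two pieces will be handled by the complementary tools for $\overline{\mathcal{L}}$ to be developed earlier in Section~\ref{sec operators t}: the $L^p$-continuity of the singular integral with kernel $\partial^2_{x_ix_j}\Gamma$ (for $v$), and interior derivative estimates for $\overline{\mathcal{L}}$-caloric functions (for $w$).

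For the local piece, I would bound the mean oscillation by twice the $L^1$-mean of $|\partial^2_{x_ix_j}v|$ on $B_r(\bar\xi)$, then by its $L^p$-mean via H\"older. The singular-integral bound $\|\partial^2_{x_ix_j}v\|_{L^p(\mathbb{R}^{N+1})}\le c\,\|\overline{\mathcal{L}}u\|_{L^p(B_{kr}(\bar\xi))}$, proved by applying Calder\'on--Zygmund theory in the homogeneous space $(\mathbb{R}^{N+1},d,|\cdot|)$ to the kernel $\partial^2_{x_ix_j}\Gamma$, combined with the identity $|B_{kr}(\bar\xi)|/|B_r(\bar\xi)|=k^{Q+2}$ from \eqref{measure ball}, then delivers exactly the $k^{(Q+2)/p}$-term in \eqref{Krylov}.

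For the far piece, the choice $k\ge 4\boldsymbol{\kappa}$ and the quasi-triangular inequality \eqref{quasitriangle} ensure that $B_r(\bar\xi)$ is well interior to $B_{kr}(\bar\xi)$, and I would then invoke the interior pointwise estimates of mean-value type,
\[
\sup_{B_r(\bar\xi)}|\partial_{x_h}\partial^2_{x_ix_j}w|\lesssim \frac{\mathcal{A}}{kr},\qquad \sup_{B_r(\bar\xi)}|Y\partial^2_{x_ix_j}w|\lesssim \frac{\mathcal{A}}{(kr)^2},
\]
with $\mathcal{A}:=\sum_{h,l=1}^q|B_{kr}(\bar\xi)|^{-1}\int_{B_{kr}(\bar\xi)}|\partial^2_{x_hx_l}w|$, which follow from hypoellipticity of $\overline{\mathcal{L}}$ and the pointwise decay of the derivatives of $\Gamma$. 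A fundamental-theorem-of-calculus argument along a curve tangent to $\partial_{x_1},\ldots,\partial_{x_q},Y$ joining $\xi,\xi'\in B_r(\bar\xi)$ with weighted speeds $\lesssim r$ and $\lesssim r^2$ then gives $|\partial^2_{x_ix_j}w(\xi)-\partial^2_{x_ix_j}w(\xi')|\lesssim \mathcal{A}/k$; the weights $r,r^2$ combine with $(kr)^{-1},(kr)^{-2}$ into a universal $1/k$. Writing $\partial^2_{x_hx_l}w=\partial^2_{x_hx_l}u-\partial^2_{x_hx_l}v$, bounding the $u$-term by $\mathcal{M}(\partial^2_{x_hx_l}u)(\xi_0)$ (licit since $\xi_0\in B_{kr}(\bar\xi)$) and the $v$-term by the $L^p$-mean of $|\overline{\mathcal{L}}u|$ on $B_{kr}(\bar\xi)$ via the singular-integral bound, produces the first term of \eqref{Krylov} plus a residual piece harmlessly absorbed into the second.

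The main technical obstacle I foresee is the proof of the interior derivative estimates above for $\overline{\mathcal{L}}$ when the coefficients $a_{ij}(t)$ are only bounded measurable in $t$: classical parabolic bootstraps are unavailable, and the required pointwise decay bounds on the derivatives of $\Gamma$, uniform as $(a_{ij}(t))$ ranges in the ellipticity class fixed by $\nu$, have to be extracted from the explicit construction of $\Gamma$ in \cite{BP}. The parallel task of establishing the $L^p$-continuity of the non-translation-invariant singular integral $\partial^2_{x_ix_j}\Gamma$ is of similar difficulty. Once both of these ingredients are in place, the decomposition above assembles \eqref{Krylov} cleanly.
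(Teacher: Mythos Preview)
Your decomposition $u=v+w$ with $v=-\int\Gamma\,(\chi_{B_{kr}(\bar\xi)}\overline{\mathcal{L}}u)$ is exactly the splitting the paper uses (phrased as $\partial^2_{x_ix_j}u=T_{ij}(f_1)+T_{ij}(f_2)$ with $f_1=\chi_{B_{kr}}\overline{\mathcal{L}}u$), and your treatment of the local piece $v$ via the $L^p$-continuity of $T_{ij}$ matches the paper's argument.

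The divergence is in the far piece. You propose Krylov's original route: interior pointwise estimates for the $\overline{\mathcal{L}}$-caloric function $w$, in particular bounds on $\partial_{x_h}\partial^2_{x_ix_j}w$ and $Y\partial^2_{x_ix_j}w$, which you say ``follow from hypoellipticity of $\overline{\mathcal{L}}$.'' This is the gap. With coefficients $a_{ij}(t)\in L^\infty$ only, $\overline{\mathcal{L}}$ is \emph{not} hypoelliptic: solutions of $\overline{\mathcal{L}}w=0$ are smooth in $x$ but generally no better than Lipschitz in $t$, so $\partial_t\partial^2_{x_ix_j}w$ (hence $Y\partial^2_{x_ix_j}w$) need not exist pointwise, and the fundamental-theorem-of-calculus step along curves tangent to $Y$ is not justified. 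The paper's introduction explicitly flags this: ``The extension of \emph{this part} of Krylov' technique to degenerate operators of H\"ormander type seems very difficult.''

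The paper bypasses interior regularity entirely. It observes that for $\xi\in B_r(\bar\xi)$ one has the absolutely convergent representation
\[
\partial^2_{x_ix_j}w(\xi)=T_{ij}(f_2)(\xi)=-\int_{\mathbb{R}^{N+1}} \partial^2_{x_ix_j}\Gamma(\xi;\eta)\,f_2(\eta)\,d\eta,
\]
and bounds the oscillation of this quantity on $B_r(\bar\xi)$ directly from the \emph{kernel} mean-value inequality \eqref{eq:meanvalueGamma} (Theorem~\ref{thm:finepropGamma}\,(2)), together with the $L^p$-boundedness of $T_{ij}$, via an abstract real-analysis lemma in spaces of homogeneous type proved in \cite{BB~VMO}. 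No differentiability of $w$ in $t$ is ever invoked. If you tried to rescue your interior estimates by writing $w$ through $\Gamma$ and differentiating under the integral, you would arrive at precisely this argument --- so the paper's route is both the correct fix and a strict simplification of your plan.
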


\subsection{Fundamental solution and representation formulas for
$\overline{\mathcal{L}}$}

We begin by reviewing some known results concerning the fundamental
so\-lu\-tion of $\overline{\mathcal{L}}$ which we will need in the sequel;
such results are proved in \cite{BB Schauder}, \cite{BP}, to which we refer
for the proofs and for further details. \medskip

To begin with, we state the following \emph{existence} result.

\begin{theorem}
[{See {\cite[Thm.\,3.11]{BB Schauder} and \cite[Thm.\,1.4]{BP}}}%
]\label{Thm fund sol coeff t dip} Under the above assumptions
\emph{\textbf{(H1)}-\textbf{(H2)}}, let $C(t,s)$ be the $N\times N$ matrix
defined as
\begin{equation}
C(t,s)=\int_{s}^{t}E(t-\sigma)\cdot%
\begin{pmatrix}
A_{0}(\sigma) & 0\\
0 & 0
\end{pmatrix}
\cdot E(t-\sigma)^{T}\,d\sigma\quad(\text{with $t>s$}) \label{eq-EC}%
\end{equation}
\emph{(}we recall that $E(\sigma)=\exp(-\sigma B)$, see \eqref{B}\emph{)}.
Then, the matrix $C(t,s)$ is \emph{sym\-me\-tric and positive definite} for
every $t>s$. Moreover, if we define
\begin{equation}%
\begin{split}
&  \Gamma\left(  x,t;y,s\right) \\
&  \quad=\frac{1}{(4\pi)^{N/2}\sqrt{\det C(t,s)}}e^{-\frac{1}{4}\langle
C(t,s)^{-1}(x-E(t-s)y),\,x-E(t-s)y\rangle}\cdot\mathbf{1}_{\{t>s\}}%
\end{split}
\label{eq.exprGammapernoi}%
\end{equation}
\emph{(}where $\mathbf{1}_{A}$ denotes the indicator function of a set
$A$\emph{)}, then for every $u\in C_{0}^{\infty}(\mathbb{R}^{N+1})$ we have
the following representation formula
\begin{equation}
u(x,t)=-\int_{\mathbb{R}^{N+1}}\Gamma(x,t;y,s)\overline{\mathcal{L}%
}u(y,s)\,dy\,ds\quad\text{for every $(x,t)\in\mathbb{R}^{N+1}$},
\label{eq:repru}%
\end{equation}
so that $\Gamma$ is the \emph{fundamental solution} for $\overline
{\mathcal{L}}$ with pole at $(y,s)$. \vspace*{0.1cm}

Moreover, $\Gamma$ satisfies the following properties:

\begin{enumerate}
\item For every fixed $\eta=(y,s)\in\mathbb{R}^{N+1}$, we have
\begin{equation}
(\overline{\mathcal{L}}\Gamma(\cdot;\eta))(x,t)=0\quad\text{for every
$x\in\mathbb{R}^{N}$ and a.e.\thinspace$t\in\mathbb{R}$}.
\label{eq:LLGammaZero}%
\end{equation}

\item For every fixed $x\in\mathbb{R}^{N}$ and every $t>s$, we have
\begin{equation}
\label{eq:integralGamma1}\int_{\mathbb{R}^{N}}\Gamma(x,t;y,s)\,dy=1.
\end{equation}

\end{enumerate}
\end{theorem}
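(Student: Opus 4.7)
The plan is to verify the listed properties of $\Gamma$ directly from its explicit Gaussian form, reducing everything to linear algebra and a standard approximate-identity/adjoint argument. I would not try to rederive $\Gamma$ from scratch (e.g.\ via Hörmander theory) but take the ansatz \eqref{eq.exprGammapernoi} as given and check that it works.

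First I would address the matrix $C(t,s)$. Symmetry is immediate from \eqref{eq-EC} because the integrand is symmetric almost everywhere. For positive definiteness on $\{t>s\}$ I would rewrite
\[
\langle C(t,s)v,v\rangle = \int_s^t \bigl\langle A_0(\sigma)\,\Pi_q E(t-\sigma)^T v,\,\Pi_q E(t-\sigma)^T v\bigr\rangle\,d\sigma,
\]
where $\Pi_q$ projects onto the first $q$ coordinates. By (H1), vanishing of this integral forces $\Pi_q E(t-\sigma)^T v \equiv 0$ on $(s,t)$; analyticity in $\sigma$ together with the Taylor expansion of $E(t-\sigma)^T = \exp(-(t-\sigma)B^T)$ then yields $\Pi_q (B^T)^j v = 0$ for every $j\geq 0$. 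The block-structure hypothesis (H2), with the rank conditions on $B_1,\ldots,B_k$, is exactly the Kalman-type property ensuring that $\{(B^T)^j e_i : 1\le i\le q,\,0\le j\le k\}$ spans $\mathbb{R}^N$, whence $v=0$.

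Next I would verify $\overline{\mathcal{L}}\Gamma(\cdot;\eta)=0$ on $\{t>s\}$ by direct differentiation, using the Lyapunov-type identity
\[
\partial_t C(t,s) = -B\,C(t,s) - C(t,s)\,B^T + \begin{pmatrix} A_0(t) & 0 \\ 0 & 0\end{pmatrix},
\]
derived from \eqref{eq-EC}, and $\partial_t E(t-s) = -B E(t-s)$. A bookkeeping computation on $\log\Gamma$ shows that the $-\partial_t$ term together with the drift $Y$ exactly cancels $\sum_{i,j=1}^q a_{ij}(t)\partial_{x_ix_j}\Gamma$. For the representation formula I would define $v(x,t) = -\int \Gamma(x,t;y,s)\overline{\mathcal{L}} u(y,s)\,dy\,ds$ for $u\in C_0^\infty$. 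The change of variables $w = x - E(t-s)y$ has Jacobian of modulus $1$ since $B$ is nilpotent and $\det E(t-s)=1$; this reduces \eqref{eq:integralGamma1} to a standard Gaussian integral once $\det C(t,s)>0$ from Step~1 is in hand, and it also gives the approximate-identity property $\int\Gamma(x,t;y,s)\phi(y)\,dy \to \phi(x)$ as $s\to t^-$ (because the eigenvalues of $C(t,s)$ collapse at the rates dictated by \eqref{eq-EC} along each block of $B$). Finally, writing $v(x,t) = -\lim_{\varepsilon\to 0^+}\int_{-\infty}^{t-\varepsilon}\int \Gamma\,\overline{\mathcal{L}}u\,dy\,ds$, integrating by parts in $(y,s)$ against the formal adjoint, and using that $\Gamma(x,t;\cdot,\cdot)$ solves that adjoint equation on $\{s<t\}$ (again by a direct computation analogous to the one for $\overline{\mathcal{L}}\Gamma$), the interior contribution vanishes and the boundary term at $s=t-\varepsilon$ converges to $u(x,t)$.

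The main obstacle is the positive definiteness of $C(t,s)$: this is where assumption (H2) genuinely enters, via the translation of the block rank conditions on the $B_j$ into a Kalman-type spanning property, and it is the reason why one may write down an honest Gaussian fundamental solution at all. Everything else is a Gaussian calculation or a standard Duhamel/adjoint argument, made slightly heavier by the non-autonomy of $C(t,s)$ and the non-convolution structure of $\Gamma$; the verification that $\overline{\mathcal{L}}\Gamma=0$ classically only makes sense in the $t$-a.e.\ statement of \eqref{eq:LLGammaZero} because $A_0$ is merely $L^\infty$ in $t$, which poses no difficulty for the argument above.
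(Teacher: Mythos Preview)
The paper does not prove this theorem; it is quoted verbatim from \cite[Thm.\,3.11]{BB Schauder} and \cite[Thm.\,1.4]{BP} as a known result, so there is no in-paper proof to compare against. Your sketch is correct and is essentially the approach taken in those cited references: the Kalman-type spanning argument is precisely how the block-rank hypothesis \textbf{(H2)} yields positive definiteness of $C(t,s)$ in \cite{BP}, the Lyapunov identity $\partial_t C(t,s)=-BC(t,s)-C(t,s)B^T+\widetilde{A}_0(t)$ is the computational core of the verification $\overline{\mathcal{L}}\Gamma(\cdot;\eta)=0$, and the representation formula is obtained there via the adjoint equation for $\Gamma(x,t;\cdot,\cdot)$ combined with the approximate-identity behaviour as $s\to t^-$, exactly as you outline.
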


\begin{remark}
\label{rem:MorePropGamma} It is worth mentioning that the existence and the
explicit expression of a fundamental solution $\Gamma$ for the operator
$\overline{\mathcal{L}}$ is proved in \cite[Thm.\,1.4]{BP} under a
\emph{weaker ver\-sion} of assumption \textbf{(H2)}; moreover, several other
properties of $\Gamma$ (which we will not exploit in this paper) are established.
\end{remark}

In the particular case when the coefficients $a_{ij}$ of $\overline
{\mathcal{L}}$ are \emph{constant}, the associated fundamental solution
$\Gamma$ constructed in Theorem \ref{Thm fund sol coeff t dip} takes a simpler
form; due to its relevance in our arguments (see Theorem
\ref{thm:finepropGamma} and the proof of Lemma \ref{lem:PropKe}), we
explicitly state this expression in the next theorem.

\begin{theorem}
[See \cite{BP}]\label{Thm fund sol cost coeff} Let $\alpha>0$ be fixed, and
let
\begin{equation}
\mathcal{L}_{\alpha}u=\alpha\sum_{i=1}^{q}\partial_{x_{i}x_{i}}^{2}u+Yu.
\label{L-alpha}%
\end{equation}
Moreover, let $\Gamma_{\alpha}$ be the fundamental solution of $\mathcal{L}%
_{\alpha}$, whose existence is guaranteed by Theorem
\ref{Thm fund sol coeff t dip}. Then, the following facts hold true:

\begin{enumerate}
\item $\Gamma_{\alpha}$ is a \emph{kernel of convolution type}, that is,
\begin{equation}
\label{eq:Gammaalfaconvolution}%
\begin{split}
\Gamma_{\alpha}(x,t;y,s)  &  =\Gamma_{\alpha}\big(x-E(t-s)y,t-s;0,0\big)\\
&  = \Gamma_{\alpha}\big((y,s)^{-1}\circ(x,t);0,0\big);
\end{split}
\end{equation}

\item the matrix $C(t,s)$ in \eqref{eq-EC} takes the simpler form
\begin{equation}
C(t,s)=C_{0}(t-s), \label{C_0}%
\end{equation}
where $C_{0}(\tau)$ is the $N\times N$ matrix defined as
\[
C_{0}(\tau)=\alpha\int_{0}^{\tau}E(t-\sigma)\cdot%
\begin{pmatrix}
I_{q} & 0\\
0 & 0
\end{pmatrix}
\cdot E(t-\sigma)^{T}d\sigma\qquad\forall\,\,\tau> 0.
\]
Furthermore, one has the `homogeneity property'
\begin{equation}
C_{0}(\tau)=D_{0}(\sqrt{\tau})C_{0}(1)D_{0}(\sqrt{\tau})\qquad\forall
\,\,\tau>0. \label{C omogenea}%
\end{equation}

\end{enumerate}

In particular, by combining \eqref{eq.exprGammapernoi} with
\eqref{C_0}-\eqref{C omogenea}, we can write%
\begin{equation}%
\begin{split}
&  \Gamma_{\alpha}(x,t;0,0)=\frac{1}{(4\pi\alpha)^{N/2}\sqrt{\det C_{0}(t)}%
}e^{-\frac{1}{4\alpha}\left\langle C_{0}(t)^{-1}x,x\right\rangle }\\
&  \qquad=\frac{1}{(4\pi\alpha)^{N/2}t^{Q/2}\sqrt{\det C_{0}(1)}}e^{-\frac
{1}{4\alpha}\langle C_{0}(1)^{-1}\big(D_{0}\big(\frac{1}{\sqrt{t}%
}\big)x\big),\,D_{0}\big(\frac{1}{\sqrt{t}}\big)x\rangle}.
\end{split}
\label{eq.exprGammaalfa}%
\end{equation}

\end{theorem}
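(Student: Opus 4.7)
The plan is to reduce everything to the single structural identity $D_{0}(\lambda)\,B\,D_{0}(\lambda)^{-1}=\lambda^{2}B$, which encodes the $2$-homogeneity of the drift $Y$ under the dilations $D(\lambda)$. Once this is in hand, both parts of the theorem follow by direct substitution into \eqref{eq.exprGammapernoi} and \eqref{eq-EC}.

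First, to establish \eqref{C_0}, I would take $A_{0}(\sigma)=\alpha I_{q}$ (constant) in the definition \eqref{eq-EC} and perform the change of variable $\tau'=t-\sigma$. The integrand becomes a function of $\tau'$ only, and the range of integration becomes $(0,t-s)$, so $C(t,s)$ depends on its arguments only through $t-s$, giving $C(t,s)=C_{0}(t-s)$ with $C_{0}(\tau)=\alpha\int_{0}^{\tau}E(\tau')\,\mathrm{diag}(I_{q},0)\,E(\tau')^{T}\,d\tau'$. Next, for the homogeneity \eqref{C omogenea}, I would first check that the block structure of $B$ in \eqref{B} and the choice of the weights $q_{i}$ imply $(D_{0}(\lambda)B D_{0}(\lambda)^{-1})_{jk}=\lambda^{q_{j}-q_{k}}b_{jk}$, which equals $\lambda^{2}b_{jk}$ whenever $b_{jk}\neq 0$ (since nonzero entries only occur in the sub-diagonal blocks, where $q_{j}=q_{k}+2$). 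Exponentiating this identity gives
\[
E(\lambda^{2}\tau')=D_{0}(\lambda)\,E(\tau')\,D_{0}(\lambda)^{-1}\qquad(\lambda>0,\ \tau'\in\mathbb{R}).
\]
Combining this with the observation that $D_{0}(\sqrt{\tau})^{-1}\,\mathrm{diag}(I_{q},0)\,D_{0}(\sqrt{\tau})^{-1}=\tau^{-1}\,\mathrm{diag}(I_{q},0)$ (because $q_{i}=1$ for $i\leq q$), the change of variable $\tau'=\tau u$ in the integral defining $C_{0}(\tau)$ factors a $D_{0}(\sqrt{\tau})$ out on each side and yields $C_{0}(\tau)=D_{0}(\sqrt{\tau})\,C_{0}(1)\,D_{0}(\sqrt{\tau})$.

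For the convolution form \eqref{eq:Gammaalfaconvolution}, I would substitute $C(t,s)=C_{0}(t-s)$ into the explicit expression \eqref{eq.exprGammapernoi}. Recalling from \eqref{eq:convolutionG} that $(y,s)^{-1}\circ(x,t)=(x-E(t-s)y,\,t-s)$, both the Gaussian exponent and the normalising determinant in $\Gamma_{\alpha}(x,t;y,s)$ depend on $(x,t)$ and $(y,s)$ only through the group element $(y,s)^{-1}\circ(x,t)$, and the indicator $\mathbf{1}_{\{t>s\}}$ becomes $\mathbf{1}_{\{t-s>0\}}$, giving \eqref{eq:Gammaalfaconvolution} at once.

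Finally, to obtain \eqref{eq.exprGammaalfa}, I would specialise to $(y,s)=(0,0)$ and use the homogeneity property just established. From $C_{0}(t)=D_{0}(\sqrt{t})\,C_{0}(1)\,D_{0}(\sqrt{t})$ we get $\det C_{0}(t)=t^{Q}\det C_{0}(1)$ (since $\det D_{0}(\sqrt{t})=t^{Q/2}$ by \eqref{eq:defQhomdim}), and $C_{0}(t)^{-1}=D_{0}(1/\sqrt{t})\,C_{0}(1)^{-1}\,D_{0}(1/\sqrt{t})$, so $\langle C_{0}(t)^{-1}x,x\rangle=\langle C_{0}(1)^{-1}D_{0}(1/\sqrt{t})x,\,D_{0}(1/\sqrt{t})x\rangle$. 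Plugging into \eqref{eq.exprGammapernoi} produces the second equality in \eqref{eq.exprGammaalfa} (with the $\alpha$-dependent normalisation coming from the $\alpha$ factor in $C_{0}$). The main technical point is really bookkeeping: ensuring the algebraic interaction between $E(\cdot)$, $B$ and the anisotropic dilations $D_{0}(\lambda)$ is used in the correct direction; no analytic obstacle arises beyond this, since the explicit form of $\Gamma$ is already provided by Theorem~\ref{Thm fund sol coeff t dip}.
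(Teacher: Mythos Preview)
Your argument is correct and is precisely the standard derivation. The paper itself does not supply a proof of this theorem: it is stated with a reference to \cite{BP} and used as input for the subsequent analysis, so there is no ``paper's own proof'' to compare against. Your reduction to the conjugation identity $D_{0}(\lambda)BD_{0}(\lambda)^{-1}=\lambda^{2}B$ (hence $E(\lambda^{2}\tau)=D_{0}(\lambda)E(\tau)D_{0}(\lambda)^{-1}$), followed by the change of variable in the defining integral for $C_{0}$, is exactly how the homogeneity \eqref{C omogenea} is obtained in the literature, and the convolution structure \eqref{eq:Gammaalfaconvolution} is, as you note, immediate once $C(t,s)$ depends only on $t-s$.

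One small remark on bookkeeping, which you allude to at the end: in the displayed formula \eqref{eq.exprGammaalfa} the $\alpha$ has been extracted from $C_{0}$ (appearing as $(4\pi\alpha)^{N/2}$ and $1/(4\alpha)$), whereas in the definition of $C_{0}(\tau)$ given just above in part~(2) the factor $\alpha$ is included inside the integral. This is a harmless notational slip in the statement itself, not a flaw in your argument; your computation $\det C_{0}(t)=t^{Q}\det C_{0}(1)$ and $C_{0}(t)^{-1}=D_{0}(1/\sqrt{t})C_{0}(1)^{-1}D_{0}(1/\sqrt{t})$ is correct for either convention, and the two normalisations are reconciled by writing $C_{0}=\alpha\widetilde{C}_{0}$ and using $\det(\alpha\widetilde{C}_{0})=\alpha^{N}\det\widetilde{C}_{0}$.
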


With Theorems \ref{Thm fund sol coeff t dip}-\ref{Thm fund sol cost coeff} at
hand, we then proceed by recalling some \emph{fine properties} of $\Gamma$ and
of its (spatial) derivatives, which will play a key r\^{o}le in our argument.
To clearly state such properties (see Theorem \ref{thm:finepropGamma} below),
we first introduce a notation: if $\boldsymbol{\ell}=(\ell_{1},\ldots,\ell
_{N})\in(\N\cup\{0\})^{N}$ is a given multi-index, we set
\[
D_{x}^{\boldsymbol{\ell}}=\partial_{x_{1}}^{\ell_{1}}\cdots\partial_{x_{N}%
}^{\ell_{N}},\qquad\omega(\boldsymbol{\ell})=\textstyle\sum_{i=1}^{N}\ell
_{i}q_{i},
\]
where $q_{1},\ldots,q_{N}$ are the exponents appearing in the dilation
$D_{0}(\lambda)$, see \eqref{dilations}.

\begin{theorem}
[Fine properties of $\Gamma$]\label{thm:finepropGamma} (See
\cite[Thm.\thinspace3.5, Thm.\thinspace3.9, Prop.\,3.13 and Thm.\thinspace
3.16]{BB Schauder}) Let $\Gamma$ be as in Theorem
\ref{Thm fund sol coeff t dip}, and let $\nu>0$ be as in \eqref{nu}. Then, the
following assertions hold.

\begin{enumerate}
\item There exists $c_{1}>0$ and, for every pair of multi-in\-dices
$\bd\ell_{1},\bd\ell_{2}\in(\N\cup\{0\})^{N}$, there exists $c=c(\nu
,\bd\ell_{1},\bd\ell_{2})>0$, such that, for every $(x,t),(y,s)\in
\mathbb{R}^{N+1}$ with $t\neq s$, we have
\begin{equation}%
\begin{split}
|D_{x}^{\bd\ell_{1}}D_{y}^{\bd\ell_{2}}\Gamma(x,t;y,s)|  &  \leq\frac{{c}%
}{(t-s)^{\omega(\bd\ell_{1}+\bd\ell_{2})/2}}\,\Gamma_{c_{1}\nu^{-1}%
}(x,t;y,s)\\
&  \leq\frac{c}{d((x,t),(y,s))^{Q+\omega(\bd\ell_{1}+\bd\ell_{2})}},
\end{split}
\label{eq:mainestimGaussian}%
\end{equation}
In particular, we have
\begin{equation}
|D_{x}^{\bd\ell_{1}}D_{y}^{\bd\ell_{2}}\Gamma(\xi,\eta)|\leq\frac{c}%
{d(\xi,\eta)^{Q+\omega(\bd\ell_{1}+\bd\ell_{2})}}\quad\forall\,\,\xi\neq\eta.
\label{eq:estimDerGammaStandard}%
\end{equation}

\item For every multi-index $\bd\ell\in(\N\cup\{0\})^{N}$ there exists
$c=c(\bd\ell,\nu)>0$ such that
\begin{align}
&  |D_{x}^{\bd{\ell}}\Gamma(\xi_{1},\eta)-D_{x}^{\bd{\ell}}\Gamma(\xi_{2}%
,\eta)|\leq c\frac{d(\xi_{1},\xi_{2})}{d(\xi_{1},\eta)^{Q+\omega(\bd\alpha
)+1}}\label{eq:meanvalueGamma}\\
&  |D_{x}^{\bd{\ell}}\Gamma(\eta,\xi_{1})-D_{x}^{\bd{\ell}}\Gamma(\eta,\xi
_{2})|\leq c\frac{d(\xi_{1},\xi_{2})}{d(\xi_{1},\eta)^{Q+\omega(\bd\alpha)+1}}
\label{eq:meanvalueGammaScambiate}%
\end{align}
for every $\xi_{1},\xi_{2},\eta\in\mathbb{R}^{N+1}$ such that
\[
d(\xi_{1},\eta)\geq4\bd{\kappa}d(\xi_{1},\xi_{2})>0.
\]

\item Let $\alpha\in(0,1)$ be fixed, and let $1\leq i,j\leq q$. Then, there
exists a constant $c = c(\alpha) > 0$ such that, for every $x\in\mathbb{R}%
^{N}$ and every $\tau< t$, one has
\begin{equation}
\label{eq:ExProp313dacitare}\int_{\mathbb{R}^{N}\times(\tau,t)}|\partial
^{2}_{x_{i}x_{j}}\Gamma(x,t;y,s)|\cdot\|E(s-t)x-y\|^{\alpha}\,dy\,ds \leq
c(t-\tau)^{\alpha/2}.
\end{equation}

\item There exists a con\-stant $c>0$ such that, for every fixed $1\leq
i,j\leq q$, one has the estimate
\begin{align}
I_{r,\tau}(x,t)  &  :=\int_{\tau}^{t}\left\vert \int_{\{y\in\mathbb{R}%
^{N}:\,d((x,t),(y,s))\,\geq\,r\}}\partial_{x_{i}x_{j}}^{2}\Gamma
(x,t;y,s)\,dy\,\right\vert \,ds\leq c,\label{eq:cancelprop}\\
J_{r,\tau}(y,s)  &  :=\int_{s}^{\tau}\left\vert \int_{\{x\in\mathbb{R}%
^{N}:\,d((y,s),(x,t))\,\geq\,r\}}\partial_{x_{i}x_{j}}^{2}\Gamma
(x,t;y,s)\,dx\,\right\vert \,dt\leq c, \label{eq:cancelpropScambiate}%
\end{align}
for every $(x,t),(y,s)\in\mathbb{R}^{N+1},s<\tau<t,r>0$.
\end{enumerate}
\end{theorem}

\begin{remark}
\label{rem:MeanValueScambiate} As a matter of fact, the mean-value inequality
\eqref{eq:meanvalueGammaScambiate} is not explicitly proved in \cite{BB
Schauder}; however, the proof of this inequality is \emph{totally analogous}
to that of \eqref{eq:meanvalueGamma} (see, precisely, \cite[Thm.\,3.9]{BB
Schauder}), and it relies on the `subelliptic' mean value theorem for the
system of H\"{o}rmander vector fields
\[
\{\partial_{x_{1}},\ldots,\partial_{x_{q}},Y\}
\]
(see \cite[Thm.\,2.1]{BB Schauder}), together with the estimates
\eqref{eq:estimDerGammaStandard} (which apply to \emph{every spatial
derivative} of $\Gamma$, both with respect to $x$ and $y$). The same comment
applies to the cancellation property \eqref{eq:cancelpropScambiate} (see the
proof of \cite[Thm.\,3.16]{BB Schauder}).
\end{remark}

Before proceeding we highlight, for a future reference, an easy consequence of
Theorem \ref{thm:finepropGamma} and of i\-dentity \eqref{eq:integralGamma1},
which will be repeatedly used in the sequel.

\begin{lemma}
\label{lem:integralvanishinggamma} Let $\Gamma$ be as in Theorem
\ref{Thm fund sol coeff t dip}, and let $\boldsymbol{\ell}$ be a fixed
non-zero multi-index. Then, for every $x\in\mathbb{R}^{N}$ and every $s<t$ we
have
\begin{equation}
\int_{\mathbb{R}^{N}}D_{x}^{\boldsymbol{\ell}}\Gamma(x,t;y,s)\,dy=0.
\label{eq:intDxGammazero}%
\end{equation}

\end{lemma}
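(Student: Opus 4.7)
The plan is to derive \eqref{eq:intDxGammazero} directly from the normalization identity \eqref{eq:integralGamma1}: since for each fixed $s<t$ the function $x\mapsto\int_{\mathbb{R}^N}\Gamma(x,t;y,s)\,dy$ is identically equal to $1$, every non-trivial classical $x$-derivative of it vanishes. The entire task therefore reduces to justifying the interchange
\[
0 \;=\; D_x^{\boldsymbol{\ell}}\!\int_{\mathbb{R}^N}\Gamma(x,t;y,s)\,dy \;=\; \int_{\mathbb{R}^N} D_x^{\boldsymbol{\ell}}\Gamma(x,t;y,s)\,dy.
\]

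I would establish this interchange by induction on $|\boldsymbol{\ell}|$; at the inductive step it suffices to move a single partial $\partial_{x_i}$ inside an integral of the form $\int_{\mathbb{R}^N} D_x^{\boldsymbol{\ell}'}\Gamma(x,t;y,s)\,dy$. By the standard differentiation-under-the-integral-sign criterion, this amounts to exhibiting, for each compact neighborhood $K$ of any given $x_0\in\mathbb{R}^N$ and each fixed pair $s<t$, an $L^1_y$ function $g$ dominating $|D_x^{\boldsymbol{\ell}'+e_i}\Gamma(x,t;y,s)|$ uniformly for $x\in K$. The Gaussian bound \eqref{eq:mainestimGaussian} of Theorem \ref{thm:finepropGamma} yields precisely such a dominant up to a harmless $(t-s)$-prefactor,
\[
|D_x^{\boldsymbol{\ell}'+e_i}\Gamma(x,t;y,s)|\;\leq\;\frac{c}{(t-s)^{(\omega(\boldsymbol{\ell}')+q_i)/2}}\,\Gamma_{c_1\nu^{-1}}(x,t;y,s),
\]
so everything is reduced to majorizing $\Gamma_{c_1\nu^{-1}}(x,t;y,s)$ uniformly for $x\in K$ by an integrable function of $y$.

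This last step is routine from the explicit formula \eqref{eq.exprGammaalfa}: writing the exponent as a positive-definite quadratic form $\alpha\langle M(x-E(t-s)y),x-E(t-s)y\rangle$ with $M=C_0(t-s)^{-1}$ and $\alpha>0$, and applying the elementary inequality $\langle M(u-v),u-v\rangle\geq\tfrac12\langle Mv,v\rangle-\langle Mu,u\rangle$, one obtains, uniformly for $x\in K$,
\[
\Gamma_{c_1\nu^{-1}}(x,t;y,s)\;\leq\;C(K,s,t)\exp\!\bigl(-\tfrac{\alpha}{2}\langle M\,E(t-s)y,\,E(t-s)y\rangle\bigr),
\]
whose right-hand side is integrable in $y\in\mathbb{R}^N$ since both $M$ and $E(t-s)$ are invertible. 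This produces the required dominating function $g$, legitimizes the differentiation under the integral, and closes the induction. There is no substantial obstacle in the argument; the only technical item is the uniform-on-compacta Gaussian majorization just sketched, which is entirely routine given the explicit expression of $\Gamma_{c_1\nu^{-1}}$ and the positive-definiteness of $C(t,s)$ from Theorem \ref{Thm fund sol coeff t dip}.
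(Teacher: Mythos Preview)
Your proposal is correct and follows precisely the route the paper indicates: the lemma is stated there without a detailed proof, simply as ``an easy consequence of Theorem~\ref{thm:finepropGamma} and of identity~\eqref{eq:integralGamma1},'' which is exactly the combination you use (differentiate \eqref{eq:integralGamma1} in $x$ and justify the interchange via the Gaussian bound \eqref{eq:mainestimGaussian}). Your uniform-on-compacta majorization of $\Gamma_{c_1\nu^{-1}}$ is a standard way to make this rigorous, and the quadratic-form inequality you invoke is valid.
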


Starting from the representation formula \eqref{eq:repru}, and using the fine
properties of $\Gamma$ collected in Theorem \ref{thm:finepropGamma}, one can
prove the following representation formula for the
second-order derivatives of a function $u\in C_{0}^{\infty}(\mathbb{R}^{N+1}%
)$. Throughout what follows, we tacitly understand that $\alpha$ is a fixed
number in $(0,1)$.

\begin{theorem}
[{See \cite[Cor.\,3.12 and Thm.\,3.14]{BB Schauder}}]%
\label{Thm repr formula u} For every $u\in C_{0}^{\infty}(\mathbb{R}^{N+1})$
and for every $1\leq i,j\leq q$, we have the following representation
formulas
\begin{align}
\partial_{x_{i}}u(x,t)  &  =-\int_{\mathbb{R}^{N+1}}\partial_{x_{i}}%
\Gamma(x,t;\cdot)\,\overline{\mathcal{L}}u\,dy\,ds,\label{eq:reprudexi}\\
\partial_{x_{i}x_{j}}^{2}u(x,t)  &  =\int_{\mathbb{R}^{N+1}}\partial
_{x_{i}x_{j}}^{2}\Gamma(x,t;y,s)\left[  \overline{\mathcal{L}}%
u(E(s-t)x,s)-\overline{\mathcal{L}}u(y,s)\right]  \,dy\,ds,
\label{repr formula u_xx}%
\end{align}
holding true for every $(x,t)\in\mathbb{R}^{N+1}$. In particular, the two
integrals appearing in the above \eqref{eq:reprudexi}\thinspace-\thinspace
\eqref{repr formula u_xx} are absolutely convergent, and the operator
\begin{equation}
T_{ij}f(x,t)=\int_{\mathbb{R}^{N+1}}\partial_{x_{i}x_{j}}^{2}\Gamma
(x,t;y,s)\big[f(E(s-t)x,s)-f(y,s)\big]\,dy\,ds \label{T}%
\end{equation}
is well defined for every $f\in\mathcal{D}_{x}^{\alpha}(\mathbb{R}^{N+1})$
(this space has been defined in (\ref{defDalfax})).
\end{theorem}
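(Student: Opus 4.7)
The plan is to differentiate the fundamental representation \eqref{eq:repru} in the spatial variables. The first formula \eqref{eq:reprudexi} is straightforward since $\partial_{x_i}\Gamma$ is sub-critically singular, but the second formula \eqref{repr formula u_xx} requires a cancellation trick, as the kernel $\partial_{x_i x_j}^2\Gamma$ has critical singularity of order $Q+2$ (matching the homogeneous dimension of $\mathbb{R}^{N+1}$).

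For \eqref{eq:reprudexi}, I use $|\partial_{x_i}\Gamma(\xi,\eta)|\le C/d(\xi,\eta)^{Q+1}$ from \eqref{eq:estimDerGammaStandard} and the fact that $\overline{\mathcal{L}}u\in L^\infty$ is compactly supported, so that the integral $\int \partial_{x_i}\Gamma(x,t;y,s)\overline{\mathcal{L}}u(y,s)\,dy\,ds$ converges absolutely (the singularity $d^{-(Q+1)}$ is integrable in a space of homogeneous dimension $Q+2$). Difference quotients of $\Gamma$ in $x_i$ are dominated, via the mean-value inequality \eqref{eq:meanvalueGamma}, by a multiple of $d^{-(Q+1)}$ on the support of $\overline{\mathcal{L}}u$, so dominated convergence yields \eqref{eq:reprudexi}.

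For \eqref{repr formula u_xx}, I would start from \eqref{eq:reprudexi} and formally differentiate once more in $x_j$. Since the resulting kernel is only critically integrable, I regularize by cutting off the singularity: integrate over $\{s\le t-\varepsilon\}$, where $\Gamma(x,t;\cdot,\cdot)$ is smooth, and differentiate under the integral sign. In the resulting expression, I invoke the crucial cancellation
\[
\int_{\mathbb{R}^N}\partial_{x_i x_j}^2\Gamma(x,t;y,s)\,dy=0
\]
from Lemma \ref{lem:integralvanishinggamma}, which allows inserting a vanishing term $\overline{\mathcal{L}}u(E(s-t)x,s)\cdot\int\partial_{x_i x_j}^2\Gamma\,dy$ into the $y$-integral. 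The point $y=E(s-t)x$ is precisely the ``center'' of the Gaussian $\Gamma(x,t;\cdot,s)$, since there $x - E(t-s)y = 0$. This converts the divergent integrand into the convergent one $\partial_{x_i x_j}^2\Gamma(x,t;y,s)\bigl[\overline{\mathcal{L}}u(E(s-t)x,s)-\overline{\mathcal{L}}u(y,s)\bigr]$.

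The absolute convergence is then checked as follows: since $u\in C^\infty_c$ and $a_{ij}\in L^\infty$, $\overline{\mathcal{L}}u$ is Lipschitz in $x$ uniformly in $t$, whence $|\overline{\mathcal{L}}u(E(s-t)x,s)-\overline{\mathcal{L}}u(y,s)|\le C\,|E(s-t)x-y|$. Combining the identity $y-E(s-t)x=-E(s-t)(x-E(t-s)y)$, the explicit form \eqref{eq:explicitd} of $d$, and the bound $|\partial_{x_i x_j}^2\Gamma|\le C/d^{Q+2}$, the integrand is locally bounded by $C/d^{Q+1}$ on the support of $u$; this lets us pass to the limit $\varepsilon\to 0^+$ and obtain \eqref{repr formula u_xx}. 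For the well-definedness of $T_{ij}f$ with $f\in\mathcal{D}_x^\alpha$, the same scheme applies with $\alpha$-Hölder continuity in place of Lipschitz continuity: $|f(E(s-t)x,s)-f(y,s)|\le|f|_{C_x^\alpha}|E(s-t)x-y|^\alpha$, giving an integrable kernel. The main obstacle is the careful control of $|E(s-t)x-y|$ by the homogeneous norm appearing in $d$: since $E(\tau)=\exp(-\tau B)$ has polynomially-growing entries in $\tau$ (as $B$ is nilpotent), one exploits the compact support of $u$ (or of $f$) to confine the integration to a region where $|t-s|$ is bounded and this growth is tame. These arguments are essentially the content of \cite[Cor.\,3.12 and Thm.\,3.14]{BB Schauder}, to which I will appeal for the detailed computations.
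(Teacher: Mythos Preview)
Your proposal is correct and aligns with the paper's own treatment: this theorem is not proved in the paper but is cited from \cite[Cor.\,3.12 and Thm.\,3.14]{BB Schauder}, and your sketch accurately captures the argument given there (sub-critical integrability of $\partial_{x_i}\Gamma$ for \eqref{eq:reprudexi}, the cancellation via Lemma~\ref{lem:integralvanishinggamma} to insert the center $E(s-t)x$ for \eqref{repr formula u_xx}, and the H\"older/Lipschitz gain to make the integrand absolutely convergent). Your final remark appealing to \cite{BB Schauder} for the detailed computations is exactly what the paper does; note that the sharp form of the convergence estimate you need is \cite[Prop.\,3.13]{BB Schauder}, which the paper itself invokes just after stating this theorem.
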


\subsection{Global $W^{2,p}$ and mean-oscillation estimates for $\overline
{\mathcal{L}}$\label{subsec oscillation}}

Taking into account the results recalled so far and following the strategy
described at the beginning of the Section, we now turn to establish
\emph{global $W^{2,p}$ and mean-oscillation estimates for the solutions of
$\overline{\mathcal{L}}u=f$}. Our first result in this direction is the following.

\begin{theorem}
\label{thm:LpEstimate}Let assumptions \textbf{\emph{(H1)}}\thinspace
-\thinspace\textbf{\emph{(H2)}} be in force, and let $p\in\left(
1,\infty\right)  $ be fixed. Moreover, let $T_{ij}$ be the operator defined in
\eqref{T}, with $1\leq i,j\leq q$.

Then, $T_{ij}$ can be extended to a linear and continuous operator from
$L^{p}(\mathbb{R}^{N+1})$ into itself. In particular, there exists $c=c(p)>0$
such that
\begin{equation}
\sum_{i,j=1}^{q}\Vert\partial_{x_{i}x_{j}}^{2}u\Vert_{L^{p}(\mathbb{R}^{N+1}%
)}\leq c\Vert\overline{\mathcal{L}}u\Vert_{L^{p}(\mathbb{R}^{N+1})},
\label{Lp Lbar}%
\end{equation}
for every $u\in C_{0}^{\infty}(\mathbb{R}^{N+1})$.
\end{theorem}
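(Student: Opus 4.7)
By the representation formula \eqref{repr formula u_xx}, $\partial_{x_i x_j}^2 u = T_{ij}(\overline{\mathcal{L}}u)$ for every $u\in C_0^\infty(\mathbb{R}^{N+1})$, so the bound \eqref{Lp Lbar} follows immediately from the $L^p$-continuity of $T_{ij}$ applied to $f=\overline{\mathcal{L}}u$ (and the extension of $T_{ij}$ from $\mathcal{D}_x^\alpha$ to all of $L^p$ is then by density). The whole content of the theorem is thus the $L^p$-boundedness of $T_{ij}$. My plan is to view $T_{ij}$ as a Calder\'on-Zygmund singular integral on the space of homogeneous type $(\mathbb{R}^{N+1},d,dx\,dt)$, whose homogeneous dimension equals $Q+2$, with (non-convolution) kernel
$$K_{ij}(\xi,\eta) := \partial_{x_i x_j}^2\Gamma(\xi,\eta),\qquad \xi\neq\eta.$$
Lemma \ref{lem:integralvanishinggamma} gives $\int_{\mathbb{R}^N}K_{ij}(x,t;y,s)\,dy=0$ for every $t>s$, and this is precisely what makes the subtraction $f(E(s-t)x,s)$ in the definition \eqref{T} of $T_{ij}$ act as a built-in principal-value renormalization: formally, $T_{ij}f(\xi)=-\mathrm{p.v.}\int K_{ij}(\xi,\eta)f(\eta)\,d\eta$.

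The three kernel conditions that qualify $K_{ij}$ as a Calder\'on-Zygmund kernel on $(\mathbb{R}^{N+1},d,dx\,dt)$ are already packaged in Theorem \ref{thm:finepropGamma}. Choosing $\bd\ell_1 = e_i + e_j$ and $\bd\ell_2 = 0$, so that $\omega(\bd\ell_1) = q_i + q_j = 2$ (recall $i,j\le q$), the estimate \eqref{eq:estimDerGammaStandard} yields the standard size bound
$$|K_{ij}(\xi,\eta)|\le \frac{c}{d(\xi,\eta)^{Q+2}};$$
the mean-value inequalities \eqref{eq:meanvalueGamma}\thinspace-\thinspace\eqref{eq:meanvalueGammaScambiate} supply the H\"ormander-type smoothness of $K_{ij}$ both in the first and in the second variable; and the cancellation properties \eqref{eq:cancelprop}\thinspace-\thinspace\eqref{eq:cancelpropScambiate}, uniform in $r$ and $\tau$, provide the control of the truncated integrals near the diagonal.

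With these three properties at hand, the proof proceeds along classical CZ lines: first establish the $L^2$-continuity of $T_{ij}$, then invoke the weak-$(1,1)$ bound for CZ kernels in spaces of homogeneous type (Coifman-Weiss), and finally interpolate with duality to cover the full range $p\in(1,\infty)$. I expect the main obstacle to be the $L^2$-boundedness. Since the matrix $C(t,s)$ in \eqref{eq-EC} depends on $t,s$ separately and not merely on $t-s$, $K_{ij}$ is \emph{not} a convolution kernel on the Lie group $\mathbb{G}=(\mathbb{R}^{N+1},\circ)$, so the direct Plancherel argument is unavailable. The natural route is to compare with the constant-coefficient case of Theorem \ref{Thm fund sol cost coeff}, where $\Gamma_\alpha$ is a genuine convolution kernel on $\mathbb{G}$ and group Fourier analysis yields $L^2$-bounds uniform in $\alpha\in[\nu,\nu^{-1}]$ thanks to assumption \textbf{(H1)}; the $L^2$-continuity of $T_{ij}$ can then be reassembled from the frozen-time operators via a Schur- or Cotlar-type argument, using the Gaussian control of $C(t,s)^{-1}$ provided by \textbf{(H1)} together with Theorem \ref{thm:finepropGamma}(1). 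Once the $L^p$-continuity of $T_{ij}$ is secured, inequality \eqref{Lp Lbar} is immediate from \eqref{repr formula u_xx}.
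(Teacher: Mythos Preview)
Your identification of the Calder\'on--Zygmund structure is correct and matches the paper: the size bound, the two-sided H\"ormander smoothness, and the two-sided cancellation are exactly the properties (i)--(iii) of the kernel $K_{ij}=-\partial_{x_ix_j}^2\Gamma$ that the paper isolates, and they are indeed supplied by Theorem~\ref{thm:finepropGamma}. The divergence from the paper --- and the genuine gap in your proposal --- is the $L^2$ step.

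Your suggested route to $L^2$-boundedness (freeze time, compare with the constant-coefficient convolution kernel $\Gamma_\alpha$, reassemble via Schur/Cotlar) is not workable as stated. The kernel $\Gamma(x,t;y,s)$ is built from the matrix $C(t,s)=\int_s^t E(t-\sigma)A_0(\sigma)E(t-\sigma)^T\,d\sigma$, which depends on the \emph{entire path} $A_0|_{[s,t]}$, not on a single frozen value $A_0(t_0)$; there is no natural decomposition of $T_{ij}$ into pieces to which the constant-coefficient Plancherel bound applies, and no almost-orthogonality structure is exhibited. This is precisely the obstacle the paper's remark after Theorem~\ref{thm:LpEstimate} alludes to.

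The paper bypasses the $L^2$ problem entirely by invoking an abstract singular-integral theorem on spaces of homogeneous type (Theorem~\ref{Thm spazio omogeneo}, from \cite{BCsing}) in which the two-sided cancellation condition (iii) --- the very estimates \eqref{eq:cancelprop}--\eqref{eq:cancelpropScambiate} you cite only as ``control near the diagonal'' --- \emph{replaces} the $L^2$ input: size $+$ smoothness $+$ cancellation yield $L^p$-boundedness directly, with no separate $L^2$ argument. The second ingredient you are missing is the regularization: the paper introduces truncated operators $T_{ij}^\varepsilon$ with a smooth time-cutoff $\phi_\varepsilon(t-s)$, shows $T_{ij}f=\lim_{\varepsilon\to0^+}T_{ij}^\varepsilon f$ pointwise for $f\in\mathcal{D}_x^\alpha$, verifies in Lemma~\ref{lem:PropKe} that the regularized kernels $K_{ij}^\varepsilon$ inherit (i)--(iii) with $\varepsilon$-independent constants, applies Theorem~\ref{Thm spazio omogeneo} to get $\|T_{ij}^\varepsilon f\|_{L^p}\le c\|f\|_{L^p}$ uniformly in $\varepsilon$, and concludes for $T_{ij}$ by Fatou's lemma. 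Your principal-value interpretation is morally correct but the paper's $\phi_\varepsilon$-regularization is what makes the limiting argument rigorous.
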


\begin{remark}
We wish to stress that, in view of our general strategy, the relevant point of
the previous theorem is the $L^{p}$ continutiy of the operator $T_{ij}$ and
not the estimate (\ref{Lp Lbar}) \emph{per se}. Also, we recall that, in the
paper \cite{M}, $L^{p}$ estimates for $\partial_{x_{i}x_{j}}^{2}u$ have been
proved for KFP operators with coefficients $a_{ij}\left(  x,t\right)  $
uniformly continuous in $x$ and $L^{\infty}$ in $t$. Since our operator
$\overline{\mathcal{L}}$ with coefficients only depending on $t$ can be seen
as a special case of the operators studied in \cite{M}, the estimate
(\ref{Lp Lbar}) should be contained in the results proved in \cite{M}.
However, we could not find in that paper neither a statement nor a proof of an
explicit representation formula of the kind $\partial_{x_{i}x_{j}}^{2}%
u=T_{ij}\left(  \overline{\mathcal{L}}u\right)  $, linking the derivatives
$\partial_{x_{i}x_{j}}^{2}u$ to a specific singular integral operator. The
arguments in this Section make our proof logically independent from the
results in \cite{M}, which we cannot apply directly to our context.
\end{remark}

In order to prove Theorem \ref{thm:LpEstimate}, we first observe that, if
$u\in C_{0}^{\infty}(\mathbb{R}^{N+1})$, then $\overline{\mathcal{L}}%
u\in\mathcal{D}_{x}^{\alpha}(\mathbb{R}^{N+1})$. Hence, if $1\leq i,j\leq q$
are fixed, by the representation formula \eqref{repr formula u_xx} we can
write%
\begin{align*}
\partial_{x_{i}x_{j}}^{2}u(x,t)  &  =\int_{\mathbb{R}^{N+1}}\partial
_{x_{i}x_{j}}^{2}\Gamma(x,t;y,s)\left[  \overline{\mathcal{L}}%
u(E(s-t)x,s)-\overline{\mathcal{L}}u(y,s)\right]  dyds\\
&  =T_{ij}(\overline{\mathcal{L}}u),
\end{align*}
where $T_{ij}$ is the operator defined in \eqref{T}. On the other hand, since
by Theorem \ref{Thm repr formula u} the above integral \emph{converges
absolutely}, we have
\begin{align}
T_{ij}(\overline{\mathcal{L}}u)  &  =\lim_{\varepsilon\rightarrow0^{+}}%
\int_{-\infty}^{t-\varepsilon}\int_{\mathbb{R}^{N}}\partial_{x_{i}x_{j}}%
^{2}\Gamma(x,t;y,s)\left[  \overline{\mathcal{L}}u(E(s-t)x,s)-\overline
{\mathcal{L}}u(y,s)\right]  dyds\nonumber\\
&  =\lim_{\varepsilon\rightarrow0^{+}}\Big(-\int_{-\infty}^{t-\varepsilon}%
\int_{\mathbb{R}^{N}}\partial_{x_{i}x_{j}}^{2}\Gamma(\xi;\eta)\overline
{\mathcal{L}}u(\eta)\,d\eta\Big), \label{limit sing int1}%
\end{align}
where we have used the fact that
\begin{align*}
&  \int_{-\infty}^{t-\varepsilon}\int_{\mathbb{R}^{N}}\Gamma(x,t;y,s)\overline
{\mathcal{L}}u(E(s-t)x,s)\,dyds\\
&  \qquad=\int_{-\infty}^{t-\varepsilon}\overline{\mathcal{L}}%
u(E(s-t)x,s)\Big(\int_{\mathbb{R}^{N}}\partial_{x_{i}x_{j}}^{2}\Gamma
(x,t;y,s)\,dy\Big)ds=0,
\end{align*}
see Lemma \ref{lem:integralvanishinggamma} (and take into account the
regularity of $\Gamma$ out of the pole).

In order to apply $L^{p}$ continuity results of singular integrals, we need to
rewrite the limit (\ref{limit sing int1}) in a different way. To this aim, let
us introduce a cut-off function $\phi_{\varepsilon}\in C_{0}^{\infty
}(\mathbb{R})$ such that

\begin{itemize}
\item[i)] $0\leq\phi_{\varepsilon}\leq1$ on $\mathbb{R}$;

\item[ii)] $\phi_{\varepsilon}=0$ on $(-\infty,\varepsilon]$ and
$\phi_{\varepsilon}=1$ on $[2\varepsilon,+\infty)$;

\item[iii)] $\left\vert \phi_{\varepsilon}^{\prime}\right\vert \leq
c/\varepsilon$ (for some $c>0$),
\end{itemize}

\noindent and define the \emph{truncated operator}
\begin{equation}
T_{ij}^{\varepsilon}(f)(x,t)=-\int_{\mathbb{R}^{N+1}}\phi_{\varepsilon
}(t-s)\partial_{x_{i}x_{j}}^{2}\Gamma(x,t;y,s)f(y,s)\,dy\,ds\text{,}
\label{eq:opTepsilon}%
\end{equation}
which is well defined for every compactly supported $f\in L^{\infty
}(\mathbb{R}^{N+1})$ since for every $\left(  x,t\right)  $, the function%
\[
\left(  y,s\right)  \mapsto\phi_{\varepsilon}(t-s)\partial_{x_{i}x_{j}}%
^{2}\Gamma(x,t;y,s)
\]
is locally integrable (see Lemma \ref{lem:PropKe} below). We then claim that,
for every fixed $f\in\mathcal{D}_{x}^{\alpha}(\mathbb{R}^{N+1})$ and for every
$\xi\in\mathbb{R}^{N+1}$, we have
\begin{equation}
T_{ij}(f)(\xi)=\lim_{\varepsilon\rightarrow0^{+}}T_{ij}^{\varepsilon}(f)(\xi).
\label{eq:TelimitT}%
\end{equation}
Indeed, by the above computation we can write, for $f\in\mathcal{D}%
_{x}^{\alpha}(\mathbb{R}^{N+1})$,%
\begin{align*}
\mathrm{a)}\,\,  &  T_{ij}\left(  f\right)  \left(  x,t\right)  =\int%
_{\mathbb{R}^{N+1}}\partial_{x_{i}x_{j}}^{2}\Gamma(x,t;y,s)\left[
f(E(s-t)x,s)-f(y,s)\right]  dyds;\\
\mathrm{b)}\,\,  &  T_{ij}^{\varepsilon}\left(  f\right)  (x,t) =-\int%
_{\mathbb{R}^{N+1}}\phi_{\varepsilon}(t-s)\partial_{x_{i}x_{j}}^{2}%
\Gamma(x,t;y,s)f(y,s)\,dy\,ds\\
&  \qquad=\int_{\mathbb{R}^{N+1}}\phi_{\varepsilon}(t-s)\partial_{x_{i}x_{j}%
}^{2}\Gamma(x,t;y,s)\left[  f(E(s-t)x,s)-f(y,s)\right]  \,dy\,ds;
\end{align*}
hence, by using \eqref{eq:ExProp313dacitare} in Theorem
\ref{thm:finepropGamma} we have
\begin{align*}
&  \left\vert \left(  T_{ij}\left(  f\right)  -T_{ij}^{\varepsilon}\left(
f\right)  \right)  (x,t)\right\vert \\
&  =\left\vert \int_{t-2\varepsilon}^{t}\left[  1-\phi_{\varepsilon
}(t-s)\right]  \left(  \int_{\mathbb{R}^{N}}\partial_{x_{i}x_{j}}^{2}%
\Gamma(x,t;y,s)\left[  f(E(s-t)x,s)-f(y,s)\right]  \,dy\right)
\,ds\right\vert \\
&  \leq\left\vert f\right\vert _{C_{x}^{\alpha}(\mathbb{R}^{N+1})}%
\int_{t-2\varepsilon}^{t}\left(  \int_{\mathbb{R}^{N}}\left\vert
\partial_{x_{i}x_{j}}^{2}\Gamma(x,t;y,s)\left\Vert E(s-t)x-y\right\Vert
^{\alpha}\,\right\vert dy\,\right)  ds\\
&  \leq c\left\vert f\right\vert _{C_{x}^{\alpha}(\mathbb{R}^{N+1}%
)}\varepsilon^{\alpha/2}.
\end{align*}
This proves \eqref{eq:TelimitT}, and allows us to rewrite
\eqref{repr formula u_xx} as follows:
\begin{equation}%
\begin{split}
\partial_{x_{i}x_{j}}^{2}u(x,t)  &  =\lim_{\varepsilon\rightarrow0^{+}%
}\Big(-\int_{\mathbb{R}^{N+1}}\phi_{\varepsilon}(t-s)\partial_{x_{i}x_{j}}%
^{2}\Gamma(x,t;y,s)\overline{\mathcal{L}}u(y,s)\,dy\,ds\Big)\\
&  \equiv\lim_{\varepsilon\rightarrow0^{+}}T_{ij}^{\varepsilon}(\overline
{\mathcal{L}}u),
\end{split}
\label{eq:reprLimite}%
\end{equation}

\bigskip

In view of the above facts, in order to prove Theorem \ref{thm:LpEstimate} it
then suffices to show the following result.

\begin{proposition}
\label{prop:ContTepsilon} Let $p\in\left(  1,\infty\right)  $ be fixed, and
let $T_{ij}^{\varepsilon}$ be the operator defined in \eqref{eq:opTepsilon}.
Then, $T_{ij}^{\varepsilon}$ can be extended to a linear and continuous
operator from $L^{p}(\mathbb{R}^{N+1})$ into itself: there exists
$c=c(\nu,p)>0$ such that
\begin{equation}
\left\Vert T_{ij}^{\varepsilon}f\right\Vert _{L^{p}\left(  \mathbb{R}%
^{N+1}\right)  }\leq c\left\Vert f\right\Vert _{L^{p}\left(  \mathbb{R}%
^{N+1}\right)  }\text{ \ \ for every }f\in L^{p}\left(  \mathbb{R}%
^{N+1}\right)  . \label{eq:LpEstimTe}%
\end{equation}
We stress that the constant $c$ appearing in \eqref{eq:LpEstimTe} is
independent of $\varepsilon$.
\end{proposition}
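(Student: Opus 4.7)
The plan is to recognize $T_{ij}^{\varepsilon}$ as a truncated variable-coefficient singular integral operator on the space of homogeneous type $(\mathbb{R}^{N+1},d,dxdt)$, whose kernel
\[
K_\varepsilon(\xi,\eta) := -\phi_\varepsilon(t-s)\,\partial^2_{x_ix_j}\Gamma(\xi;\eta),\qquad \xi=(x,t),\ \eta=(y,s),
\]
satisfies---with constants \emph{independent of $\varepsilon$}---the full set of hypotheses (size, H\"{o}rmander-type regularity, cancellation) required by the abstract Calder\'{o}n-Zygmund type theorem for singular integrals in spaces of homogeneous type established in \cite{BB VMO}. Granted this verification, the bound \eqref{eq:LpEstimTe} is a direct application of that theorem, and the $\varepsilon$-independence of the constant is automatic because every hypothesis is checked uniformly in $\varepsilon$.

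First, the size estimate $|K_\varepsilon(\xi,\eta)|\le c/d(\xi,\eta)^{Q+2}$ is immediate from $0\le\phi_\varepsilon\le 1$ and \eqref{eq:estimDerGammaStandard} applied with $\omega(\bd{\ell}_1+\bd{\ell}_2)=2$. Next, the H\"{o}rmander-type regularity conditions in both entries are obtained via the splitting
\[
K_\varepsilon(\xi_1,\eta)-K_\varepsilon(\xi_2,\eta)=-\phi_\varepsilon(t_1-s)\bigl[\partial^2_{x_ix_j}\Gamma(\xi_1;\eta)-\partial^2_{x_ix_j}\Gamma(\xi_2;\eta)\bigr]-[\phi_\varepsilon(t_1-s)-\phi_\varepsilon(t_2-s)]\,\partial^2_{x_ix_j}\Gamma(\xi_2;\eta),
\]
and its analog in the second slot: the first summand is controlled by the mean value estimates \eqref{eq:meanvalueGamma}--\eqref{eq:meanvalueGammaScambiate}, while the second is handled by noting that $\phi_\varepsilon(t_1-s)-\phi_\varepsilon(t_2-s)$ is supported in an $s$-slab of measure $O(\varepsilon+|t_1-t_2|)$, on which the pointwise size bound for $\partial^2_{x_ix_j}\Gamma$ can be integrated to absorb the $1/\varepsilon$ coming from $|\phi'_\varepsilon|$. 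Finally, the cancellation hypothesis is exactly the content of \eqref{eq:cancelprop}--\eqref{eq:cancelpropScambiate}: the truncation $\phi_\varepsilon(t-s)$ forces the outer $s$-integration onto the set $\{s<t-\varepsilon\}$, so that for every $\xi=(x,t)$ and every $r>0$,
\[
\Bigl|\int_{d(\xi,\eta)\ge r}K_\varepsilon(\xi,\eta)\,d\eta\Bigr|\le \int_{-\infty}^{t-\varepsilon}\Bigl|\int_{\{y:\,d((x,t),(y,s))\ge r\}}\partial^2_{x_ix_j}\Gamma(x,t;y,s)\,dy\Bigr|\,ds\le c
\]
uniformly in $r$ and $\varepsilon$ by \eqref{eq:cancelprop} with $\tau=t-\varepsilon$, and symmetrically using \eqref{eq:cancelpropScambiate} for the swapped expression.

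The expected main obstacle is precisely the H\"{o}rmander verification in the presence of the cutoff: the naive bound $|\phi'_\varepsilon|\le c/\varepsilon$ is useless taken alone, and one must carefully balance the blow-up as $\varepsilon\to 0^+$ against the $O(\varepsilon)$ measure of the slab where $\phi_\varepsilon(t_1-s)$ and $\phi_\varepsilon(t_2-s)$ disagree, exploiting the parabolic-type scaling $d((x,t),(x,s))\asymp\sqrt{|t-s|}$ encoded in the quasi-norm $\rho$ of \eqref{eq:defrhonorm}. Once the kernel hypotheses are in force, the $L^p$-continuity \eqref{eq:LpEstimTe} with constant independent of $\varepsilon$ follows from the general machinery of singular integrals in spaces of homogeneous type exactly as in \cite{BB VMO}, and the extension from the dense subclass $\mathcal{D}_x^\alpha(\mathbb{R}^{N+1})$ (on which $T_{ij}^\varepsilon$ was originally defined) to all of $L^p(\mathbb{R}^{N+1})$ is standard.
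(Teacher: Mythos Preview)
Your overall strategy coincides with the paper's: verify that the regularized kernel $K_\varepsilon(\xi,\eta)=-\phi_\varepsilon(t-s)\partial^2_{x_ix_j}\Gamma(\xi;\eta)$ satisfies, uniformly in $\varepsilon$, the size, H\"{o}rmander, and cancellation hypotheses of the abstract singular\nobreakdash-integral theorem (the paper quotes it as Theorem~\ref{Thm spazio omogeneo}, from \cite{BCsing}), and then read off \eqref{eq:LpEstimTe}. Your treatment of the size estimate and of the cancellation condition via \eqref{eq:cancelprop}\,--\,\eqref{eq:cancelpropScambiate} is correct and matches the paper.

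The gap is in your handling of the H\"{o}rmander condition for the cutoff term. The abstract theorem you are invoking requires the \emph{pointwise} standard estimate (ii), not an integrated one, so the plan of ``integrating the pointwise size bound over an $s$-slab of measure $O(\varepsilon+|t_1-t_2|)$ to absorb the $1/\varepsilon$'' does not verify the hypothesis actually needed. Moreover, the bare size bound $|\partial^2_{x_ix_j}\Gamma|\le c\,d(\xi,\eta)^{-(Q+2)}$ is not sharp enough here: the paper's argument (Lemma~\ref{lem:PropKe}) uses instead the \emph{first} inequality in \eqref{eq:mainestimGaussian}, namely $|\partial^2_{x_ix_j}\Gamma(\xi,\eta)|\le c\,(t-s)^{-1}\Gamma_{c_1\nu^{-1}}(\xi,\eta)$. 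The mechanism is pointwise: when $\phi_\varepsilon(t-s)\ne\phi_\varepsilon(t_0-s)$, the mean value theorem forces one of $t-s$, $t_0-s$ to lie in $(\varepsilon,2\varepsilon)$, so that $|\phi_\varepsilon(t-s)-\phi_\varepsilon(t_0-s)|\le c|t-t_0|/|t_j-s|$ for the appropriate $j$; combining this with the Gaussian bound gives
\[
\big|\phi_\varepsilon(t-s)-\phi_\varepsilon(t_0-s)\big|\,\big|\partial^2_{x_ix_j}\Gamma\big|
\le \frac{c\,|t-t_0|}{(t_j-s)^{2}}\,\Gamma_{c_1\nu^{-1}}
\le \frac{c\,|t-t_0|}{d(\xi_j,\eta)^{Q+4}}
\le \frac{c\,d(\xi_0,\xi)}{d(\xi_0,\eta)^{Q+3}},
\]
using $|t-t_0|\le d(\xi_0,\xi)^2$ and the equivalence $d(\xi_j,\eta)\asymp d(\xi_0,\eta)$ under the separation condition. (Incidentally, your heuristic $d((x,t),(x,s))\asymp\sqrt{|t-s|}$ is false in this geometry---see Remark~\ref{rem:propd}(2)---though the one-sided inequality $\sqrt{|t-s|}\le d(\xi,\eta)$, which is what is actually needed, does hold.) Once you replace your slab-integration sketch by this pointwise two-case argument, the rest of your proposal goes through exactly as in the paper.
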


Before embarking on the (quite technical) proof of Proposition
\ref{prop:ContTepsilon}, let us show how this proposition allows us to easily
prove Theorem \ref{thm:LpEstimate}.

\begin{proof}
[Proof (of Theorem \ref{thm:LpEstimate})]We first observe that, given any
$f\in\mathcal{D}_{x}^{\alpha}(\mathbb{R}^{N+1})$, by combi\-ning
\eqref{eq:LpEstimTe} with Fatou's lemma, we get
\begin{align*}
\int_{\mathbb{R}^{N+1}}|T_{ij}(f)|^{p}\,d\xi &  =\int_{\mathbb{R}^{N+1}}%
\lim_{\varepsilon\rightarrow0^{+}}|T_{ij}^{\varepsilon}(f)|^{p}\,d\xi\\
&  \leq\liminf_{\varepsilon\rightarrow0^{+}}\int_{\mathbb{R}^{N+1}}%
|T_{ij}^{\varepsilon}(f)|^{p}\,d\xi\leq c^{p}\Vert f\Vert_{L^{p}%
(\mathbb{R}^{N+1})}^{p}.
\end{align*}
From this, since $\mathcal{D}_{x}^{\alpha}(\mathbb{R}^{N+1})$ is dense in
$L^{p}(\mathbb{R}^{N+1})$, we conclude that $T_{ij}$ can be e\-xtended to a
linear and continuous operator from $L^{p}(\mathbb{R}^{N+1})$ into itself.

In particular, given any $u\in C_{0}^{\infty}(\mathbb{R}^{N+1})$, since
$f=\overline{\mathcal{L}}u\in\mathcal{D}_{x}^{\alpha}(\mathbb{R}^{N+1})$, from
the representation formula \eqref{repr formula u_xx} we get
\[
\Vert\partial_{x_{i}x_{j}}^{2}u\Vert_{L^{p}(\mathbb{R}^{N+1})}=\Vert
T_{ij}(\overline{\mathcal{L}}u)\Vert_{L^{p}(\mathbb{R}^{N+1})}\leq
c\Vert\overline{\mathcal{L}}u\Vert_{L^{p}(\mathbb{R}^{N+1})}.
\]
This ends the proof.
\end{proof}

We now turn to prove Proposition \ref{prop:ContTepsilon}. We rely on the
following abstract result from the theory of singular integrals.

\begin{theorem}
[{See \cite[Thm.\,4.1]{BCsing}}]\label{Thm spazio omogeneo} Let $K:\mathbb{R}%
^{N+1}\times\mathbb{R}^{N+1}\setminus\left\{  \xi\neq\eta\right\}
\rightarrow\mathbb{R}$ be a kernel satisfying the following conditions: there
exist $\boldsymbol{A},\boldsymbol{B},\boldsymbol{C}>0$ such that:

\begin{enumerate}
\item[(i)] for every $\xi\neq\eta\in\mathbb{R}^{N+1}$ one has
\[
\left\vert K(\xi,\eta)\right\vert +\left\vert K(\eta,\xi)\right\vert \leq
\frac{\boldsymbol{A}}{\left\vert B_{d(\xi,\eta)}(\xi)\right\vert };
\]

\item[(ii)] there exists a constant $\beta>0$ such that for every $\xi,\xi
_{0},\eta\in\mathbb{R}^{N+1}$ satisfying the condition $d(\xi_{0},\eta
)\geq4\bd\kappa\,d(\xi_{0},\xi)>0$, we have
\begin{align*}
&  \left\vert K(\xi,\eta)-K(\xi_{0},\eta)\right\vert +\left\vert K(\eta
,\xi)-K(\eta,\xi_{0})\right\vert \\
&  \qquad\leq\boldsymbol{B}\,\left(  \frac{d(\xi_{0},\xi)}{d(\xi_{0},\eta
)}\right)  ^{\beta}\cdot\frac{1}{\left\vert B_{d(\xi_{0},\eta)}(\xi
_{0})\right\vert };
\end{align*}

\item[(iii)] for every $\zeta\in\mathbb{R}^{N+1}$ and every $0<r<R<\infty$,
one has
\[
\left\vert \int_{\{r<d(\zeta,\eta)<R\}}K(\zeta,\eta)\,d\eta\right\vert
+\left\vert \int_{\{r<d(\zeta,\xi)<R\}}K(\xi,\zeta)\,d\xi\right\vert
\leq\boldsymbol{C}.
\]

\end{enumerate}

Suppose, in addition, that $K_{\varepsilon}$ \emph{(}for $\varepsilon
>0$\emph{)} is a `regularized kernel' defined in such a way that the following
properties holds:

\begin{itemize}
\item[(a)] $K_{\varepsilon}(\xi,\cdot)$ and $K_{\varepsilon}(\cdot,\xi)$ are
locally integrable for every $\xi\in\mathbb{R}^{N+1}$;

\item[(b)] $K_{\varepsilon}$ satisfies the standard estimates (i), (ii), with
constant bounded by $c^{\prime}\left(  \boldsymbol{A}+\boldsymbol{B}\right)  $
and $c^{\prime}$ absolute constant (independent of $\varepsilon$);

\item[(c)] there exists an absolute constant $c^{\prime}$ such that for every
$\zeta\in\mathbb{R}^{N+1}$%
\[
\left\vert \int_{\{r<d(\zeta,\eta)<R\}}K_{\varepsilon}(\zeta,\eta
)\,d\eta\right\vert +\left\vert \int_{\{r<d(\zeta,\xi)<R\}}K_{\varepsilon}%
(\xi,\zeta)\,d\xi\right\vert \leq c^{\prime}\boldsymbol{C}.
\]

\end{itemize}

For every $f\in C_{0}^{\alpha}(\mathbb{R}^{N+1})$ (see (\ref{defCalfa0})) set%
\[
T_{\varepsilon}f(\xi)=\int_{\mathbb{R}^{N+1}}K_{\varepsilon}(\xi,\eta
)f(\eta)\,d\eta.
\]
Then, for every $p\in(1,\infty)$, the operator $T_{\varepsilon}$ can be
extended to a linear continuous operator on $L^{p}(\mathbb{R}^{N+1})$, and%
\[
\left\Vert T_{\varepsilon}f\right\Vert _{L^{p}(\mathbb{R}^{N+1})}\leq
c\left\Vert f\right\Vert _{L^{p}(\mathbb{R}^{N+1})}%
\]
for every $f\in L^{p}(\mathbb{R}^{N+1})$, with $c>0$ independent of
$\varepsilon.$ Moreover, the constant $c$ in the last estimate depends on the
quantities involved in the assumptions as follows:%
\[
c\leq c^{\prime}\left(  \boldsymbol{A}+\boldsymbol{B}+\boldsymbol{C}\right)
,
\]
with $c^{\prime}$ `absolute' constant.
\end{theorem}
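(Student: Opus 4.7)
The plan is to obtain the uniform $L^p$ bound via the classical Calder\'on--Zygmund paradigm, transported to the space of homogeneous type $(\mathbb{R}^{N+1},d,|\cdot|)$. Since by (\ref{doubling}) the underlying measure is doubling, the Coifman--Weiss machinery is available: Calder\'on--Zygmund decompositions at any height $\lambda>0$, covering lemmas, and Marcinkiewicz interpolation all carry over to this quasimetric setting, with the quasi-triangle constant $\boldsymbol{\kappa}$ absorbed into universal constants. The strategy is: (Step 1) an $L^2$ bound; (Step 2) a weak-$(1,1)$ bound; (Step 3) Marcinkiewicz interpolation plus duality.

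For Step 1, since $K_\varepsilon$ is not of convolution type we cannot use Plancherel. The cleanest route is to invoke a $T(1)$-type theorem in spaces of homogeneous type: the size bound and the H\"ormander-type regularity expressed by (a)--(b) are exactly the standard kernel hypotheses, while the cancellation condition (c) provides uniform control of the truncated integrals $\int_{r<d(\zeta,\eta)<R}K_\varepsilon(\zeta,\eta)\,d\eta$ and its transpose analogue, which is the information needed to place $T_\varepsilon(1)$ and $T_\varepsilon^*(1)$ in $BMO$ uniformly in $\varepsilon$, with $BMO$-norm controlled by $\boldsymbol{C}$. Alternatively one verifies the Cotlar--Knapp--Stein almost-orthogonality estimates by hand on the annular decomposition $\{2^k<d(\xi,\eta)\le 2^{k+1}\}$, which again uses (i),(ii),(iii) (or their $\varepsilon$-regularized versions) in a standard way.

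For Step 2, fix $f\in L^1$ and $\lambda>0$, and apply the Calder\'on--Zygmund decomposition in $(\mathbb{R}^{N+1},d,|\cdot|)$ to produce $f=g+b$ with $\|g\|_\infty\le c\lambda$, $\|g\|_1\le\|f\|_1$, and $b=\sum_i b_i$ supported on pairwise disjoint balls $B_i=B_{r_i}(\xi_i)$ with $\int b_i=0$, $\|b_i\|_1\le c\lambda|B_i|$, and $\sum_i|B_i|\le c\lambda^{-1}\|f\|_1$. Control the good part by Chebyshev plus the $L^2$ bound from Step 1. For the bad part, outside the dilated balls $B_i^*:=B_{4\boldsymbol{\kappa} r_i}(\xi_i)$ use the zero-mean property of $b_i$ together with the regularity estimate in (b)/(ii) to write
\[
|T_\varepsilon b_i(\xi)|\le\int_{B_i}|K_\varepsilon(\xi,\eta)-K_\varepsilon(\xi,\xi_i)|\,|b_i(\eta)|\,d\eta,
\]
which when integrated over $\xi\notin B_i^*$ yields $\int_{(B_i^*)^c}|T_\varepsilon b_i|\le c(\boldsymbol{A}+\boldsymbol{B})\|b_i\|_1$ after summing the geometric series in $d(\xi,\xi_i)/r_i$. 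Summing over $i$ and adding the trivial bound $|\bigcup_i B_i^*|\le c\lambda^{-1}\|f\|_1$ from doubling gives the weak-$(1,1)$ estimate with constant $\le c'(\boldsymbol{A}+\boldsymbol{B}+\boldsymbol{C})$, uniformly in $\varepsilon$.

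Step 3 is routine: Marcinkiewicz interpolation between the $L^2$ bound of Step 1 and the weak-$(1,1)$ bound of Step 2 yields $L^p$ continuity for $1<p\le 2$; for $2<p<\infty$ one observes that hypotheses (i)--(iii) and (a)--(c) are symmetric in the two entries of the kernel, so the transposed kernel $K_\varepsilon^*(\xi,\eta)=K_\varepsilon(\eta,\xi)$ satisfies the same assumptions with the same constants and the previous two steps apply to $T_\varepsilon^*$, giving its $L^q$ boundedness for $q\in(1,2]$; by duality this transfers to $T_\varepsilon$ on $L^p$. All constants are $\varepsilon$-independent because (a)--(c) are, and tracking them gives $c\le c'(\boldsymbol{A}+\boldsymbol{B}+\boldsymbol{C})$. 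The main obstacle is Step 1: in the absence of Fourier methods and of any group structure on the kernel, producing the $L^2$ estimate in a quantitative way requires either a careful $T(1)$ argument --- where the nontrivial content is verifying the weak boundedness property and extracting $BMO$ information of $T_\varepsilon(\mathbf 1)$ from the cancellation hypothesis (c) --- or a by-hand almost-orthogonality decomposition whose overlap estimates must be driven exclusively by (i)--(ii).
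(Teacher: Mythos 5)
The paper does not actually prove this theorem: it is imported verbatim from \cite[Thm.\,4.1]{BCsing}, so there is no internal argument to measure yours against. Your sketch follows the canonical Calder\'on--Zygmund route in a space of homogeneous type, which is essentially the scheme of the cited reference: an $L^{2}$ bound extracted from the cancellation condition, a weak-$(1,1)$ estimate via the Calder\'on--Zygmund decomposition and the H\"ormander condition (ii)/(b), Marcinkiewicz interpolation for $1<p\le 2$, and duality for $p>2$ using the symmetry of hypotheses (i)--(iii) and (a)--(c) in the two kernel entries. This architecture is sound, works uniformly in $\varepsilon$, and does yield the linear dependence $c\le c^{\prime}(\boldsymbol{A}+\boldsymbol{B}+\boldsymbol{C})$.

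Two caveats. First, essentially all of the content of the theorem sits in your Step 1, which you only gesture at: deducing the uniform $L^{2}$ bound from (i)--(iii) requires either verifying the weak boundedness property and showing that the uniform control of the truncated integrals in (iii)/(c) really places $T_{\varepsilon}(1)$ and $T_{\varepsilon}^{*}(1)$ in $BMO$ with norm $\lesssim\boldsymbol{C}$ (so that a $T(1)$ theorem valid on quasimetric spaces of homogeneous type can be invoked), or carrying out the Cotlar--Knapp--Stein overlap estimates on the annular pieces, where the cancellation (iii) is indispensable to make $\Vert T_{j}T_{k}^{*}\Vert$ decay in $|j-k|$; as written, this step is a pointer to a theorem rather than a proof. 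Second, in a space of homogeneous type the Calder\'on--Zygmund decomposition does not produce pairwise disjoint balls: the covering has only bounded overlap, and the mean-zero pieces $b_{i}$ must be built with a subordinate partition of unity (or with a dyadic-cube construction), which slightly complicates, but does not obstruct, your Step 2. Neither point is a fatal gap, but both would have to be filled in for a self-contained argument; as it stands your proposal is a correct reconstruction of the standard proof that the paper chose to cite rather than reproduce.
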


\begin{remark}
\label{rem:TheoremMoreGeneral} It is worth mentioning that Theorem
\ref{Thm spazio omogeneo} can be applied in the present context since
$(\mathbb{R}^{N+1},d,\left\vert \cdot\right\vert )$ is a \emph{space of
homogeneous type}, in the sense of Coifman-Weiss (see Remark \ref{rem:propd}).
In fact, such a result actually holds if we replace the space $(\mathbb{R}%
^{N+1},d,\left\vert \cdot\right\vert )$ with any \emph{space of homogeneous
type} $(X,d,\mu)$. More precisely, the standard definition of space of
homogeneous type requires $d\left(  x,y\right)  $ to be symmetric, while our
$d$ is actually only quasisymmetric (see (\ref{quasisymmetric})). However,
this problem can be overcome in a standard way: given a quasisymmetric
quasidistance $d$, the function
\[
d^{\prime}\left(  x,y\right)  =d\left(  x,y\right)  +d\left(  y,x\right)
\]
is a (symmetric) quasidistance, equivalent to $d$. Now, properties (i)-(ii) in
Theorem \ref{Thm spazio omogeneo} are clearly stable under replacement of $d$
with an equivalent $d^{\prime}$. This is also true for the cancellation
property (iii), for a less obvious reason, which is discussed for instance in
\cite[Remark 4.6]{BCsing}.
\end{remark}

In order to deduce Proposition \ref{prop:ContTepsilon} from Theorem
\ref{Thm spazio omogeneo} we first note that, if $T_{ij}^{\varepsilon}$ is the
operator defined in \eqref{eq:opTepsilon}, we can clearly write
\[
T_{ij}^{\varepsilon}(f)=\int_{\mathbb{R}^{N+1}}K_{ij}^{\varepsilon}(\xi
;\eta)f(\eta)\,d\eta,
\]
where the kernel $K_{ij}^{\varepsilon}$ is given by
\begin{equation}
K_{ij}^{\varepsilon}(\xi;\eta)=-\phi_{\varepsilon}(t-s)\partial_{x_{i}x_{j}%
}^{2}\Gamma(\xi;\eta). \label{eq:defKe}%
\end{equation}
Hence, if we set $K_{ij}(\xi;\eta)=-\partial_{x_{i}x_{j}}^{2}\Gamma(\xi;\eta
)$, we can directly derive Proposition \ref{prop:ContTepsilon} from Theorem
\ref{Thm spazio omogeneo} as soon as we are able to prove the following facts:

\begin{itemize}
\item[1)] $K_{ij}$ satisfies properties (i)-\thinspace(iii) in the statement
of Theorem \ref{Thm spazio omogeneo};

\item[2)] $K_{ij}^{\varepsilon}$ satisfies properties (a)-(c) in the statement
of Theorem \ref{Thm spazio omogeneo}.
\end{itemize}

Since the validity of assertion 1) follows directly from Theorem
\ref{thm:finepropGamma} (jointly with the fact that $|B_{r}(\xi)| =
c\,r^{Q+2}$, see \eqref{measure ball}), we turn to prove assertion 2).

\begin{lemma}
\label{lem:PropKe} The `regularized kernel' $K_{ij}^{\varepsilon}$ defined in
\eqref{eq:defKe} satisfies the proper\-ties \emph{(a)}\thinspace
-\thinspace\emph{(c)} in the statement of Theorem \ref{Thm spazio omogeneo}.
\end{lemma}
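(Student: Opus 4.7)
The plan is to verify properties (a), (b), (c) for $K_{ij}^\varepsilon(\xi,\eta)=\phi_\varepsilon(t-s)\,K_{ij}(\xi,\eta)$, where $K_{ij}=-\partial_{x_i x_j}^2\Gamma$, by combining the fine estimates collected in Theorem \ref{thm:finepropGamma} with the defining properties of $\phi_\varepsilon$, tracking carefully that every constant is independent of $\varepsilon$. Property (a) is immediate: since $\phi_\varepsilon(t-s)=0$ for $t-s\le\varepsilon$, the Gaussian estimate in \eqref{eq:mainestimGaussian} yields
\[
|K_{ij}^\varepsilon(\xi,\eta)|\le \frac{c}{\varepsilon}\,\Gamma_{c_1\nu^{-1}}(\xi,\eta)\quad\text{on the support of }\phi_\varepsilon(t-s),
\]
and the Gaussian factor on the right is integrable on any compact set in either variable separately.

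For (b), the size bound (i) reduces at once to the corresponding bound for $K_{ij}$ (which holds by \eqref{eq:estimDerGammaStandard} together with \eqref{measure ball}), since $0\le\phi_\varepsilon\le 1$. For the regularity bound (ii) I would decompose
\[
K_{ij}^\varepsilon(\xi,\eta)-K_{ij}^\varepsilon(\xi_0,\eta)=\phi_\varepsilon(t-s)\bigl[K_{ij}(\xi,\eta)-K_{ij}(\xi_0,\eta)\bigr]+\bigl[\phi_\varepsilon(t-s)-\phi_\varepsilon(t_0-s)\bigr]K_{ij}(\xi_0,\eta).
\]
The first summand is controlled directly by \eqref{eq:meanvalueGamma}. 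The second summand is the delicate piece, and is where I expect the main obstacle to lie: the Lipschitz bound $|\phi_\varepsilon(t-s)-\phi_\varepsilon(t_0-s)|\le (c/\varepsilon)|t-t_0|$ carries a $1/\varepsilon$ blow-up, but this difference is supported where $\min(t-s,t_0-s)\in(\varepsilon,2\varepsilon)$; on this support $\sqrt{\varepsilon}$ is comparable to $\sqrt{|t_0-s|}$ and hence is dominated by $d(\xi_0,\eta)$, while the bound $|t-t_0|\le d(\xi,\xi_0)^2$ follows from \eqref{eq:explicitd}. Combining these observations with the pointwise estimate on $K_{ij}(\xi_0,\eta)$ from \eqref{eq:estimDerGammaStandard} produces a H\"older-type bound of the form required by (ii), with constants uniform in $\varepsilon$. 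The swapped-variable version of (ii) is handled identically using \eqref{eq:meanvalueGammaScambiate}.

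It remains to verify the cancellation condition (c). Writing $\phi_\varepsilon=1-(1-\phi_\varepsilon)$, one splits
\[
\int_{\{r<d(\zeta,\eta)<R\}}K_{ij}^\varepsilon(\zeta,\eta)\,d\eta=\int_{\{r<d<R\}}K_{ij}(\zeta,\eta)\,d\eta-\int_{\{r<d<R\}}[1-\phi_\varepsilon(t-s)]\,K_{ij}(\zeta,\eta)\,d\eta.
\]
The first integral satisfies property (iii) for $K_{ij}$, which reduces to \eqref{eq:cancelprop} by writing the $d$-annulus as the difference of two exterior-of-ball regions and pulling the absolute value inside the $s$-integral. For the second integral, the integrand is supported in $\{t-2\varepsilon<s<t\}$ with $|1-\phi_\varepsilon|\le 1$, so the same reduction yields a bound via \eqref{eq:cancelprop} that is independent of $\varepsilon$. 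The dual cancellation integral (integration in the first variable) is treated symmetrically using \eqref{eq:cancelpropScambiate} in place of \eqref{eq:cancelprop}, thereby completing the verification.
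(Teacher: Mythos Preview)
Your treatment of properties (a) and (c) is correct and close to the paper's own arguments (the paper handles (c) slightly more directly, bounding $|\phi_\varepsilon|\le 1$ inside the $s$\,-\,integral rather than splitting $\phi_\varepsilon=1-(1-\phi_\varepsilon)$, but the effect is the same). The size part of (b) is also fine.

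The gap is in the regularity estimate (b)(ii). Your single decomposition places $K_{ij}(\xi_0,\eta)$ in the second summand, and you then assert that on the support of $\phi_\varepsilon(t-s)-\phi_\varepsilon(t_0-s)$ one has $\sqrt{\varepsilon}$ comparable to $\sqrt{|t_0-s|}$. Both the support description and this comparability can fail. For a concrete configuration, take $s=0$, $t=\tfrac{3}{2}\varepsilon$, $t_0=M\varepsilon$ with $M$ large, and choose the spatial components so that $d(\xi_0,\eta)\ge 4\boldsymbol{\kappa}\,d(\xi_0,\xi)$ (this is possible by placing $y$ far from $x_0$). Then $\phi_\varepsilon(t-s)\in(0,1)$ while $\phi_\varepsilon(t_0-s)=1$, so the difference is nonzero; yet $t_0-s=M\varepsilon$ is \emph{not} comparable to $\varepsilon$. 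Feeding the Lipschitz bound $|\phi_\varepsilon(t-s)-\phi_\varepsilon(t_0-s)|\le c\varepsilon^{-1}|t-t_0|$ and the pointwise estimate on $K_{ij}(\xi_0,\eta)$ into your second summand then produces a bound that is off by the factor $M$, and no $\varepsilon$\,-\,uniform constant results. (Your support claim ``$\min(t-s,t_0-s)\in(\varepsilon,2\varepsilon)$'' is also inaccurate: with $t-s=\varepsilon/2$ and $t_0-s=3\varepsilon$ the difference is $-1\neq 0$ while the minimum is $\varepsilon/2$.)

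The paper fixes this by using \emph{two} decompositions rather than one: when $\varepsilon<t-s<2\varepsilon$ it writes the difference so that the kernel factor in the ``$\phi_\varepsilon$\,-\,increment'' term is $K_{ij}(\xi,\eta)$, and when $\varepsilon<t_0-s<2\varepsilon$ it uses your decomposition with $K_{ij}(\xi_0,\eta)$. In each case the relevant time increment ($t-s$ or $t_0-s$) genuinely lies in $(\varepsilon,2\varepsilon)$, so one can replace $\varepsilon^{-1}$ by $(t-s)^{-1}$ (resp.\ $(t_0-s)^{-1}$) and then invoke the Gaussian bound \eqref{eq:mainestimGaussian} to gain the extra factor $d(\xi_0,\eta)^{-2}$ needed for (ii). Your argument is easily repaired along these lines, but as written the single decomposition does not suffice.
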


\begin{proof}
We prove the validity of the three properties separately. \medskip

\noindent-\thinspace\thinspace\emph{Proof of property} (a). On account of
Theorem \ref{thm:finepropGamma}\thinspace-\thinspace1), and recalling all the
pro\-perties satisfied by $\phi_{\varepsilon}$, for every fixed $\xi
=(x,t)\in\mathbb{R}^{N+1}$ we have
\begin{align*}
|K_{ij}^{\varepsilon}(\xi,\cdot)|  &  =|\phi_{\varepsilon}(t-s)\partial
_{x_{i}x_{j}}^{2}\Gamma(\xi;\eta)|\leq c\,\mathbf{1}_{\{t-s\geq\varepsilon
\}}(s)\,\frac{1}{d(\xi,\eta)^{Q}}\\
&  (\text{by the explicit expression of $d$, see \eqref{d}})\\
&  \leq c\,\mathbf{1}_{\{t-s\geq\varepsilon\}}(s)\,\frac{1}{|t-s|^{Q/2}}%
\leq\frac{c}{\varepsilon^{Q/2}},
\end{align*}
and this proves that $K_{ij}^{\varepsilon}(\xi,\cdot)\in L_{\mathrm{loc}}%
^{1}(\mathbb{R}^{N+1})$. Analogously one can show that $K_{ij}^{\varepsilon
}(\cdot,\xi)$ is bounded, and thus locally integrable. \medskip

\noindent-\thinspace\thinspace\emph{Proof of property} (b). We begin by
proving that $K_{ij}^{\varepsilon}$ satisfies the standard estima\-te (i)
(with a constant $\leq c^{\prime}(\mathbf{A}+\mathbf{B})$ and $c^{\prime}$
independent of $\varepsilon$). To this end it suff\-ices to observe that,
since $0\leq\phi_{\varepsilon}\leq1$, for every $\xi,\eta\in\mathbb{R}^{N+1}$
we have
\begin{align*}
|K_{ij}^{\varepsilon}(\xi,\eta)|+|K_{ij}^{\varepsilon}(\eta,\xi)|  &
\leq|\partial_{x_{i}x_{j}}^{2}\Gamma(\xi,\eta)|+|\partial_{x_{i}x_{j}}%
^{2}\Gamma(\eta,\xi)|\\
&  \leq\frac{\boldsymbol{A}}{d(\xi,\eta)^{Q+2}} = \frac{\boldsymbol{A}%
}{|B_{d(\xi,\eta)}(\xi)|},
\end{align*}
where we have used the fact that $K_{ij}=-\partial_{x_{i}x_{j}}^{2}\Gamma$
satisfies (i), see \eqref{eq:estimDerGammaStandard}.

We then turn to prove the validity of the standard estimate (b), namely
\begin{equation}%
\begin{split}
&  |K_{ij}^{\varepsilon}(\xi,\eta)-K_{ij}^{\varepsilon}(\xi_{0},\eta
)|+|K_{ij}^{\varepsilon}(\eta,\xi)-K_{ij}^{\varepsilon}(\eta,\xi_{0})|\\
&  \qquad\leq\boldsymbol{B}\,\frac{d(\xi_{0},\xi)}{d(\xi_{0},\eta)}\cdot
\frac{1}{|B_{d(\xi_{0},\eta)}(\xi_{0})|}%
\end{split}
\label{eq:standardiibToprove}%
\end{equation}
for every $\xi_{0},\xi,\eta\in\mathbb{R}^{N+1}$, provided that $d(\xi_{0}%
,\eta)>4\boldsymbol{\kappa}d(\xi_{0},\xi)$ (the choice $\beta=1$ follows from
the fact that $K_{ij}$ satisfies (ii) with $\beta=1$, see Theorem
\ref{thm:finepropGamma}). We limit ourselves to prove the above estimate for
the term
\[
|K_{ij}^{\varepsilon}(\xi,\eta)-K_{ij}^{\varepsilon}(\xi_{0},\eta)|
\]
since the other one is totally analogous.

To begin with, we observe that, by definition of $K_{ij}^{\varepsilon}$, we
have
\begin{align*}
&  |K_{ij}^{\varepsilon}(\xi,\eta)-K_{ij}^{\varepsilon}(\xi_{0},\eta)|\\
&  \qquad=|\phi_{\varepsilon}(t-s)\partial_{x_{i}x_{j}}^{2}\Gamma
(x,t;y,s)-\phi_{\varepsilon}(t_{0}-s)\partial_{x_{i}x_{j}}^{2}\Gamma
(x_{0},t_{0};y,s)|\\
&  \qquad\leq|\partial_{x_{i}x_{j}}^{2}\Gamma(\xi;\eta)-\partial_{x_{i}x_{j}%
}^{2}\Gamma(\xi_{0},\eta)|\,\phi_{\varepsilon}(t_{0}-s)\\
&  \qquad\qquad+\partial_{x_{i}x_{j}}^{2}\Gamma(\xi;\eta)|\phi_{\varepsilon
}(t_{0}-s)-\phi_{\varepsilon}(t-s)|\\
&  \qquad\equiv A_{1}+A_{2};
\end{align*}
moreover, since $0\leq\phi_{\varepsilon}\leq1$, from \eqref{eq:meanvalueGamma}
and \eqref{measure ball} we get%
\begin{equation}
A_{1}\leq\boldsymbol{B}\frac{d(\xi_{0},\xi)}{d(\xi_{0},\eta)^{Q+3}%
}=\boldsymbol{B}\frac{d(\xi_{0},\xi)}{d(\xi_{0},\eta)}\frac{1}{|B_{d(\xi
_{0},\eta)}(\xi_{0})|}. \label{eq:estimA1Gen}%
\end{equation}
Hence, we only need to estimate the term $A_{2}$. To this end we first notice
that, since $\phi_{\varepsilon}\in C_{0}^{\infty}(\mathbb{R})$, by the mean
value theorem we can write
\[
|\phi_{\varepsilon}(t_{0}-s)-\phi_{\varepsilon}(t-s)|=|\phi_{\varepsilon
}^{\prime}(\tau)|\,|t_{0}-t|;
\]
from this, since $\phi^{\prime}\not \equiv 0$ only on $(\varepsilon
,2\varepsilon)$ and since $|\phi_{\varepsilon}^{\prime}|\leq c/\varepsilon$
(for some absolute constant $c>0$), we obtain the following estimates
\begin{equation}%
\begin{split}
\mathrm{1)}\,\,  &  |\phi_{\varepsilon}(t_{0}-s)-\phi_{\varepsilon}(t-s)|\leq
c\,\frac{|t_{0}-t|}{\varepsilon}\leq c\,\frac{|t_{0}-t|}{|t-s|}\quad(\text{if
$\varepsilon<t-s<2\varepsilon$})\\[0.1cm]
\mathrm{2)}\,\,  &  |\phi_{\varepsilon}(t_{0}-s)-\phi_{\varepsilon}(t-s)|\leq
c\,\frac{|t_{0}-t|}{\varepsilon}\leq c\,\frac{|t_{0}-t|}{|t_{0}-s|}%
\quad(\text{if $\varepsilon<t_{0}-s<2\varepsilon$}).
\end{split}
\label{eq:twocasesEstimphie}%
\end{equation}
We then consider the two cases 1)\thinspace-\thinspace2) separately. (Note
that if none of these cases occurs, then $A_{2}=0$ and there is nothing to
prove).\medskip

\noindent-\thinspace\thinspace\textbf{Case 1):\thinspace\thinspace
$\varepsilon<t-s<2\varepsilon$}. In this first case, owing to
(\ref{eq:mainestimGaussian}) we get
\[
A_{2}\leq c\frac{|t_{0}-t|}{|t-s|}|\partial_{x_{i}x_{j}}^{2}\Gamma(\xi
,\eta)|\leq c\frac{|t_{0}-t|}{|t-s|^{2}}\Gamma_{c_{1}\nu^{-1}}(x,t;y,s)\leq
\frac{c|t_{0}-t|}{d(\xi,\eta)^{Q+4}};
\]
from this, using \eqref{eq:explicitd} (and since the condition $d(\xi_{0}%
,\eta)\geq4\bd\kappa\,d(\xi_{0},\xi)>0$ ensures that $d(\xi_{0},\eta)$ and
$d(\xi,\eta)$ are equivalent), we obtain
\[
A_{2}\leq c\frac{d\left(  \xi_{0},\xi\right)  ^{2}}{d\left(  \xi,\eta\right)
^{Q+4}}\leq c\frac{d\left(  \xi_{0},\xi\right)  }{d\left(  \xi_{0}%
,\eta\right)  ^{Q+3}}.
\]
By combining this last estimate with \eqref{eq:estimA1Gen}, we obtain
\eqref{eq:standardiibToprove}. \medskip

\noindent-\thinspace\thinspace\textbf{Case 2):\thinspace\thinspace
$\varepsilon<t_{0}-s<2\varepsilon$}. In this second case, we first write
\begin{align*}
&  |K_{ij}^{\varepsilon}(\xi,\eta)-K_{ij}^{\varepsilon}(\xi_{0},\eta)|\\
&  \qquad=|\phi_{\varepsilon}(t-s)\partial_{x_{i}x_{j}}^{2}\Gamma
(x,t;y,s)-\phi_{\varepsilon}(t_{0}-s)\partial_{x_{i}x_{j}}^{2}\Gamma
(x_{0},t_{0};y,s)|\\
&  \qquad\leq|\partial_{x_{i}x_{j}}^{2}\Gamma(\xi;\eta)-\partial_{x_{i}x_{j}%
}^{2}\Gamma(\xi_{0},\eta)|\,\phi_{\varepsilon}(t-s)\\
&  \qquad\qquad+\partial_{x_{i}x_{j}}^{2}\Gamma(\xi_{0};\eta)|\phi
_{\varepsilon}(t_{0}-s)-\phi_{\varepsilon}(t-s)|\\
&  \qquad\equiv B_{1}+B_{2}.
\end{align*}
Now, since $0\leq\phi_{\varepsilon}\leq1$, from \eqref{eq:meanvalueGamma} and
\eqref{measure ball} we get%
\begin{equation}
B_{1}\leq\boldsymbol{B}\frac{d(\xi_{0},\xi)}{d(\xi_{0},\eta)^{Q+3}%
}=\boldsymbol{B}\frac{d(\xi_{0},\xi)}{d(\xi_{0},\eta)}\frac{1}{|B_{d(\xi
_{0},\eta)}(\xi_{0})|}. \label{eq:estimB1Gen}%
\end{equation}
Moreover, by arguing exactly as in the previous case, we have
\begin{align*}
B_{2}  &  \leq c\frac{\left\vert t_{0}-t\right\vert }{|t_{0}-s|}%
|\partial_{x_{i}x_{j}}^{2}\Gamma(\xi_{0};\eta)|\leq c\frac{|t_{0}-t|}%
{|t_{0}-s|^{2}}\Gamma_{c_{1}\nu^{-1}}(x_{0},t_{0};y,s)\\
&  \leq\frac{c|t_{0}-t|}{d(\xi_{0},\eta)^{Q+4}}\leq c\frac{d(\xi_{0},\xi)^{2}%
}{d(\xi_{0},\eta)^{Q+4}}\leq c\frac{d(\xi_{0},\xi)}{d(\xi_{0},\eta)^{Q+3}}.
\end{align*}
By combining this last estimate with \eqref{eq:estimB1Gen}, we obtain
\eqref{eq:standardiibToprove} also in this case. \medskip

\noindent-\thinspace\thinspace\emph{Proof of property} (c). Owing to the
\emph{cancellation property} of $K_{ij}=\partial_{x_{i}x_{j}}^{2}\Gamma$
contain\-ed in Theorem \ref{thm:finepropGamma}\thinspace-\thinspace3), for
every $\zeta=(z,u)\in\mathbb{R}^{N+1}$ we have
\begin{align*}
&  \Big\vert\int_{\{r<d(\zeta,\xi)<R\}}K_{ij}^{\varepsilon}(\xi,\zeta
)d\xi\Big\vert+\Big\vert\int_{\{r<d(\zeta,\eta)<R\}}K_{ij}^{\varepsilon}%
(\zeta,\eta)d\eta\Big\vert\\
&  \qquad(\text{setting $\xi=(x,t),\,\eta=(y,s)$})\phantom{\int_a^b}\\
&  \qquad\leq\int_{u}^{u+R^{2}}\Big\vert\phi_{\varepsilon}(t-u)\Big(\int%
_{\{x\in\mathbb{R}^{N}:r<d((z,u),(x,t))<R\}}\partial_{x_{i}x_{j}}^{2}%
\Gamma(x,t;z,u)dx\Big)\Big\vert\,dt\\
&  \qquad\qquad+\int_{u-R^{2}}^{u}\Big\vert\phi_{\varepsilon}(u-s)\Big(\int%
_{\{y\in\mathbb{R}^{N}:r<d((z,u),(y,s))<R\}}\partial_{x_{i}x_{j}}^{2}%
\Gamma(z,u;y,s)dy\Big)\Big\vert\,ds\\
&  \qquad\leq\int_{u}^{u+R^{2}}\Big|\int_{\{x\in\mathbb{R}^{N}%
:r<d((z,u),(x,t))<R\}}\partial_{x_{i}x_{j}}^{2}\Gamma(x,t;z,u)dx\Big|\,dt\\
&  \qquad\qquad+\int_{u-R^{2}}^{u}\Big|\int_{\{y\in\mathbb{R}^{N}%
:r<d((z,u),(y,s))<R\}}\partial_{x_{i}x_{j}}^{2}\Gamma
(z,u;y,s)dy\Big\vert\,ds\\
&  \qquad\leq c,
\end{align*}
for some constant $c>0$ independent of $\zeta$.\medskip

This completes the proof of Lemma \ref{lem:PropKe} and therefore of Theorem
\ref{thm:LpEstimate}.
\end{proof}

We are now ready to prove the main result of this Section:

\begin{proof}
[Proof of Theorem \ref{Thm Krylov main step}]Given any $p\in(1,\infty)$, we
know from Theorem \ref{thm:LpEstimate} that the operator ${T}_{ij}$ defined in
\eqref{T} can be extended to a li\-near and continuous operator form
$L^{p}(\mathbb{R}^{N+1})$ into itself; moreover, by \eqref{repr formula u_xx}
we have
\[
\partial_{x_{i}x_{j}}^{2}u=T_{ij}(\overline{\mathcal{L}}u)
\]
for every $u\in C_{0}^{\infty}(\mathbb{R}^{N+1})$ and $1\leq i,j\leq q$.

Given any $f\in C_{0}^{\infty}(\mathbb{R}^{N+1})$, reasoning as in the proof
of Theorem \ref{thm:LpEstimate} we can write%
\[
T_{ij}(f)\left(  \xi\right)  =\lim_{\varepsilon\rightarrow0^{+}}\left(
-\int_{-\infty}^{t-\varepsilon}\int_{\mathbb{R}^{N}}\partial_{x_{i}x_{j}}%
^{2}\Gamma(\xi;\eta)f(\eta)\,d\eta\right)  .
\]
On the other hand, if $\xi\notin\mathrm{supp}(f)$, the last limit equals the
integral%
\[
-\int_{-\infty}^{t}\int_{\mathbb{R}^{N}}\partial_{x_{i}x_{j}}^{2}\Gamma
(\xi;\eta)f(\eta)\,d\eta,
\]
which is absolutely convergent. Therefore in this case%
\[
T_{ij}(f)\left(  \xi\right)  =\int_{\mathbb{R}^{N+1}}K_{ij}(\xi;\eta
)f(\eta)\,d\eta.
\]
Moreover, the kernel $K_{ij}$ satisfies the mean value inequality
(\ref{eq:meanvalueGamma}).

Finally, owing to \eqref{growth balls}, we have $\left\vert B_{kr}%
(\xi)\right\vert \leq c\,k^{Q+2}|B_{r}(\xi)|$, which is
(\ref{growth condition}) with $\beta=Q+2$.

Hence we can apply Theorem \ref{Thm abstract sing int} and conclude
(\ref{Krylov}). So the theorem is proved.
\end{proof}

\section{Operators with coefficients depending on $(x,t)$%
\label{Sec operators a(x,t)}}

With Theorem \ref{Thm Krylov main step} at hand, we can now prove Theorem
\ref{Thm main a priori estimates}. Henceforth we assume that $\mathcal{L}$ is
a KFP operator \eqref{L}, with coefficients $a_{ij}(x,t)$ \emph{depending on
both space and time} and fulfilling assumptions \textbf{(H1)}, \textbf{(H2)},
\textbf{(H3)} stated in Section \ref{sec intro}.

\subsection{Estimates on the mean oscillation of $\partial_{x_{i}x_{j}}^{2}u$
in terms of $\mathcal{L}u$\label{subsec mean oscillation}}

According to \cite{BB VMO}, the first step for the proof of Theorem
\ref{Thm main a priori estimates} consists in establishing a control on the
mean oscillation of $\partial_{x_{i}x_{j}}^{2}u$ for functions $u\in
C_{0}^{\infty}(\mathbb{R}^{N+1})$ \emph{with small support}, in terms of
$\mathcal{L}u$. To prove this result we combine Theorem
\ref{Thm Krylov main step} with the $VMO_{x}$ assumption on the $a_{ij}$'s,
following as far as possible Krylov' technique \cite{K}.

\begin{theorem}
\label{Thm 2} Let $p,\alpha,\beta\in(1,\infty)$ with $\alpha^{-1}+\beta
^{-1}=1$. Then, there exists a constant $c>0$, depending on $p,B,\nu$, such
that for every $R,r>0$, $\xi^{\ast},\xi_{0},\overline{\xi}\in\mathbb{R}^{N+1}$
with $\xi_{0}\in B_{r}(\overline{\xi})$ and $u\in C_{0}^{\infty}(B_{R}%
(\xi^{\ast}))$, and every $k\geq4\boldsymbol{\kappa}$, we have
\begin{equation}%
\begin{split}
&  \frac{1}{|B_{r}(\overline{\xi})|}\int_{B_{r}(\overline{\xi})}%
\big\vert\partial_{x_{i}x_{j}}^{2}u(\xi)-(\partial_{x_{i}x_{j}}^{2}%
u)_{B_{r}(\overline{\xi})}\big\vert\,d\xi\\
&  \qquad\leq\frac{c}{k}\sum_{h,l=1}^{q}\mathcal{M}(\partial_{x_{i}x_{j}}%
^{2}u)(\xi_{0})+ck^{\frac{Q+2}{p}}(\mathcal{M}(|\mathcal{L}u|^{p})(\xi
_{0}))^{1/p}\\
&  \qquad\qquad+ck^{\frac{Q+2}{p}}a^{\sharp}\left(  R\right)  ^{1/p\beta}%
\sum_{h,l=1}^{q}(\mathcal{M}(|\partial_{x_{h}x_{l}}^{2}u|^{p\alpha})(\xi
_{0}))^{1/p\alpha}%
\end{split}
\label{eq:meanEstimVMO}%
\end{equation}
for $i,j=1,2,...,q$. Recall that $a^{\sharp}\left(  R\right)  $ has been
defined in \eqref{mod VMO coeff}.
\end{theorem}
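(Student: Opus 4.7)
The plan is to combine Theorem \ref{Thm Krylov main step} with the classical Krylov freezing trick in the space variable, transferring the bound from $\overline{\mathcal{L}}u$ to $\mathcal{L}u$ via the $VMO_x$ assumption on the coefficients. For $\xi^\ast,\overline{\xi},\xi_0,r,R$ and $k\geq 4\boldsymbol{\kappa}$ as in the statement, I would freeze the coefficients by setting
\[
\bar a_{ij}(t):=\bigl(a_{ij}(\cdot,t)\bigr)_{B_{kr}(\overline{\xi})},
\]
the partial mean in the sense of \eqref{partial mean}, and denote by $\overline{\mathcal{L}}$ the model operator \eqref{L model t} built from these $\bar a_{ij}(t)$. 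The crucial identity is
\[
\overline{\mathcal{L}}u \;=\; \mathcal{L}u \;+\; \sum_{i,j=1}^{q}\bigl(\bar a_{ij}(t)-a_{ij}(x,t)\bigr)\,\partial_{x_i x_j}^{2}u.
\]

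Applying Theorem \ref{Thm Krylov main step} to $u$ with this choice of $\overline{\mathcal{L}}$, the first summand on the right-hand side of \eqref{Krylov} already yields the $(1/k)$-maximal function piece in \eqref{eq:meanEstimVMO}. In the second summand of \eqref{Krylov} I would substitute $\overline{\mathcal{L}}u$ via the identity above and use the triangle inequality: the $\mathcal{L}u$ contribution produces the term $k^{(Q+2)/p}\bigl(\mathcal{M}(|\mathcal{L}u|^p)(\xi_0)\bigr)^{1/p}$ because $\xi_0\in B_{kr}(\overline{\xi})$ and the uncentered Hardy--Littlewood maximal operator dominates any ball-average through $\xi_0$. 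The remaining cross terms
\[
\Bigl(\tfrac{1}{|B_{kr}(\overline{\xi})|}\int_{B_{kr}(\overline{\xi})} |a_{ij}-\bar a_{ij}|^{p}\,|\partial_{x_i x_j}^{2} u|^{p}\,d\xi\Bigr)^{1/p}
\]
I would treat with H\"older's inequality of exponents $\beta$ and $\alpha$. The $|\partial^{2}u|^{p\alpha}$-factor is absorbed into $\bigl(\mathcal{M}(|\partial_{x_i x_j}^{2}u|^{p\alpha})(\xi_0)\bigr)^{1/(p\alpha)}$, which is precisely the derivative factor appearing in \eqref{eq:meanEstimVMO}.

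The remaining coefficient factor $\bigl(\tfrac{1}{|B_{kr}|}\int_{B_{kr}}|a_{ij}-\bar a_{ij}|^{p\beta}\bigr)^{1/\beta}$ is where the $VMO_x$ modulus enters. Using the $L^\infty$ bound $|a_{ij}-\bar a_{ij}|\leq 2/\nu$ from \eqref{nu} to interpolate $L^{p\beta}$ between $L^1$ and $L^\infty$, I would estimate
\[
\frac{1}{|B_{kr}|}\int_{B_{kr}}|a_{ij}-\bar a_{ij}|^{p\beta}\;\leq\;(2/\nu)^{p\beta-1}\,\frac{1}{|B_{kr}|}\int_{B_{kr}}|a_{ij}-\bar a_{ij}|\;\leq\;c\,a_{R}^{\sharp},
\]
which produces the claimed factor $(a_R^{\sharp})^{1/(p\beta)}$. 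The subtle point lies precisely in this final step: $a_R^{\sharp}$ controls oscillations on balls of radius at most $R$, while the averaging ball has radius $kr$. When $kr\leq R$ the inequality is immediate by monotonicity of $\eta_{a_{ij}}(\cdot)$; in the complementary regime $kr>R$ one instead exploits that $u$, and hence the whole cross term, is supported in $B_R(\xi^\ast)$, so the integration reduces to $B_{kr}\cap B_R(\xi^\ast)$ and the VMO control can be applied on $B_R(\xi^\ast)$ directly, with the doubling property \eqref{doubling} absorbing the ratio of volumes into a dimensional constant. Assembling the three contributions yields \eqref{eq:meanEstimVMO}; the remaining bookkeeping follows \cite[Sec.\,4]{BB VMO} essentially verbatim, the whole point being that Theorem \ref{Thm Krylov main step} is what reduces the present situation to the one treated there.
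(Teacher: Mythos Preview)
Your overall approach coincides with the paper's, and the structure---apply Theorem~\ref{Thm Krylov main step} to a frozen operator, split $\overline{\mathcal{L}}u=\mathcal{L}u+(\bar a-a)\partial^2u$, handle the coefficient factor by H\"older with exponents $\alpha,\beta$ and interpolation against the $L^\infty$ bound---is exactly right. There is, however, a genuine imprecision in your treatment of the regime $kr>R$.

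With your single freezing $\bar a_{ij}(t)=(a_{ij}(\cdot,t))_{B_{kr}(\overline{\xi})}$, restricting the integration to $B_{kr}(\overline{\xi})\cap B_R(\xi^\ast)$ does \emph{not} let you ``apply the VMO control on $B_R(\xi^\ast)$ directly'': the modulus $a_R^\sharp$ measures the oscillation of $a_{ij}(\cdot,t)$ about its partial mean over a ball of radius at most $R$, whereas your $\bar a_{ij}(t)$ is the mean over $B_{kr}(\overline{\xi})$, a ball of radius $kr>R$. If you try to insert $(a_{ij}(\cdot,t))_{B_R(\xi^\ast)}$ by the triangle inequality, you are left with the term $|(a_{ij}(\cdot,t))_{B_{kr}(\overline{\xi})}-(a_{ij}(\cdot,t))_{B_R(\xi^\ast)}|$, and there is no reason this difference of means over two non-nested balls (the larger of radius exceeding $R$) should be controlled by $a_R^\sharp$.

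The paper's remedy is precisely to make the freezing \emph{case-dependent}: take
\[
\gamma_{hl}(t)=\begin{cases}(a_{hl}(\cdot,t))_{B_{kr}(\overline{\xi})} & \text{if }kr\leq R,\\ (a_{hl}(\cdot,t))_{B_R(\xi^\ast)} & \text{if }kr\geq R.\end{cases}
\]
In the second case the coefficient integral over $B_{kr}(\overline{\xi})\cap B_R(\xi^\ast)$ is then bounded by $|B_R(\xi^\ast)|\,a_R^\sharp$ by definition, and doubling (together with the harmless assumption $B_r(\overline{\xi})\cap B_R(\xi^\ast)\neq\varnothing$, since otherwise the left-hand side of \eqref{eq:meanEstimVMO} vanishes) gives $|B_R(\xi^\ast)|\leq c\,|B_{kr}(\overline{\xi})|$. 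One should also check that the frozen matrix $(\gamma_{hl}(t))$ still satisfies \textbf{(H1)}; this follows from convexity of the ellipticity condition \eqref{nu}, and is needed to invoke Theorem~\ref{Thm Krylov main step}.
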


\begin{proof}
We can assume that $B_{r}(\overline{\xi})\cap B_{R}(\xi^{\ast})\neq
\varnothing$, because otherwise%
\[
\int_{B_{r}(\overline{\xi})}\big\vert\partial_{x_{i}x_{j}}^{2}u(\xi
)-(\partial_{x_{i}x_{j}}^{2}u)_{B_{r}(\overline{\xi})}\big\vert\,d\xi=0,
\]
and thus there is nothing to prove.

We then observe that, if $\overline{A}=(\gamma_{hl}(t))_{hl}$ is any fixed
matrix satisfying assumption \textbf{(H1)} (that is, $\gamma_{hl}\in
L^{\infty}(\mathbb{R}^{N+1})$ and the ellipticity condition \eqref{nu} holds),
by Theo\-rem \ref{Thm Krylov main step} we can write (provided that $k$ is
large enough)
\begin{equation}%
\begin{split}
&  \frac{1}{|B_{r}(\overline{\xi})|}\int_{B_{r}(\overline{\xi})}%
\big\vert\partial_{x_{i}x_{j}}^{2}u(\xi)-(\partial_{x_{i}x_{j}}^{2}%
u)_{B_{r}(\overline{\xi})}\big\vert\,d\xi\\
&  \qquad\leq c\Big\{\frac{1}{k}\mathcal{M}(\overline{\mathcal{L}}u)(\xi
_{0})\\
&  \qquad\qquad+k^{\frac{Q+2}{p}}\Big(\frac{1}{|B_{kr}(\overline{\xi})|}%
\int_{B_{kr}(\overline{\xi})}|\overline{\mathcal{L}}u(x,t)|^{p}%
\,dxdt\Big)^{1/p}\Big\},
\end{split}
\label{proof 2}%
\end{equation}
where $\overline{\mathcal{L}}$ is the KFP operator with coefficient matrix
$\overline{A}$, that is,
\begin{equation}
\textstyle\overline{\mathcal{L}}=\sum_{i,j=1}^{q}\gamma_{ij}(t)\partial
_{x_{i}x_{j}}^{2}+Y.\label{L bar}%
\end{equation}
We now turn to bound the right hand-side of \eqref{proof 2}. First, to handle
the term $\mathcal{M}\left(  \overline{\mathcal{L}}u\right)  $, let us write,
by (\ref{L bar}),%
\[
\overline{\mathcal{L}}u=\sum_{i,j=1}^{q}\gamma_{ij}(t)\partial_{x_{i}x_{j}%
}^{2}u+\mathcal{L}u-\sum_{i,j=1}^{q}a_{ij}(x,t)\partial_{x_{i}x_{j}}^{2}u
\]
so that%
\begin{equation}
\mathcal{M}\left(  \overline{\mathcal{L}}u\right)  \leq\mathcal{M}\left(
\mathcal{L}u\right)  +c\sum_{i,j=1}^{q}\mathcal{M}\left(  \partial_{x_{i}%
x_{j}}^{2}u\right)  \label{proof 2b}%
\end{equation}
with $c$ only depending on $\nu,q.$

To handle the second term at the right hand side of (\ref{proof 2}), we write%
\[
\Vert\overline{\mathcal{L}}u\Vert_{L^{p}(B_{kr}(\overline{\xi}))}\leq
\Vert\mathcal{L}u\Vert_{L^{p}(B_{kr}(\overline{\xi}))}+\Vert\overline
{\mathcal{L}}u-\mathcal{L}u\Vert_{L^{p}(B_{kr}(\overline{\xi}))}%
\]
and we exploit the fact that, since $\xi_{0}\in B_{r}(\overline{\xi})$, we
have
\[
\Big(\frac{1}{|B_{kr}(\overline{\xi})|}\int_{B_{kr}(\overline{\xi}%
)}|\mathcal{L}u(\xi)|^{p}\,d\xi\Big)^{1/p}\leq\big(\mathcal{M}(|\mathcal{L}%
u|^{p})(\xi_{0}))^{1/p};
\]
as a consequence of these facts, we get
\begin{equation}%
\begin{split}
&  \frac{1}{|B_{r}(\overline{\xi})|}\int_{B_{r}(\overline{\xi})}%
\big|\partial_{x_{i}x_{j}}^{2}u(\xi)-(\partial_{x_{i}x_{j}}^{2}u)_{B_{r}%
(\overline{\xi})}\big|\,d\xi\\
&  \qquad\leq\frac{c}{k}\left\{  \sum_{h,l=1}^{q}\mathcal{M}(\partial
_{x_{h}x_{l}}^{2}u)(\xi_{0})+\mathcal{M}\left(  \mathcal{L}u\right)  \left(
\xi_{0}\right)  \right\}  +ck^{\frac{Q+2}{p}}\big(\mathcal{M}(|\mathcal{L}%
u|^{p})(\xi_{0})\big)^{1/p}\\
&  \qquad\qquad+ck^{\frac{Q+2}{p}}\frac{1}{|B_{kr}(\overline{\xi})|^{1/p}%
}\Vert\overline{\mathcal{L}}u-\mathcal{L}u\Vert_{L^{p}(B_{kr}(\overline{\xi
}))}.
\end{split}
\label{proof 3}%
\end{equation}
Furthermore, we have (setting, as usual, $\xi=(x,t)$)
\begin{equation}%
\begin{split}
&  \int_{B_{kr}(\overline{\xi})}|\overline{\mathcal{L}}u(\xi)-\mathcal{L}%
u(\xi)|^{p}\,d\xi\\
&  \qquad\leq c\sum_{h,l=1}^{q}\Big(\int_{B_{kr}(\overline{\xi})\cap B_{R}%
(\xi^{\ast})}|\gamma_{hl}(t)-a_{hl}(x,t)|^{p\beta}\,dxdt\Big)^{1/\beta}%
\times\\
&  \qquad\qquad\times\Big(\int_{B_{kr}(\overline{\xi})\cap B_{R}(\xi^{\ast}%
)}|\partial_{x_{h}x_{l}}^{2}u(\xi)|^{p\alpha}\,d\xi\Big)^{1/\alpha}\\
&  \qquad(\text{since the coefficients $\gamma_{hl},\,a_{hl}$ are bounded by
$\nu^{-1}$})\\
&  \qquad\leq c(2\nu^{-1})^{p-1/\beta}\sum_{h,l=1}^{q}\Big(\int_{B_{kr}%
(\overline{\xi})\cap B_{R}(\xi^{\ast})}|{\gamma}_{hl}(t)-a_{hl}%
(x,t)|\,dxdt\Big)^{1/\beta}\times\\
&  \qquad\qquad\times\Big(\int_{B_{kr}(\overline{\xi})\cap B_{R}(\xi^{\ast}%
)}|\partial_{x_{h}x_{l}}^{2}u(\xi)|^{p\alpha}\,d\xi\Big)^{1/\alpha}.
\end{split}
\label{proof 4}%
\end{equation}
Now, since the above estimates hold \emph{for every fixed matrix}
$\overline{A}=(\gamma_{hl}(t))_{hl}$ satisfying assumption \textbf{(H1)}, we
choose the particular mat\-rix $(\gamma_{hl}(t))_{hl}$, depending on the
fix\-ed quantities $r,k,R,\xi^{\ast},\overline{\xi}$, defined as follows
\[
\gamma_{hl}(t)=%
\begin{cases}
(a_{hl}(\cdot,t))_{B_{R}(\xi^{\ast})} & \text{if }kr\geq R\\[0.1cm]%
(a_{hl}(\cdot,t))_{B_{kr}(\overline{\xi})} & \text{if }kr\leq R.
\end{cases}
\]
We recall that, according to Definition \ref{Def VMO}, we have
\[
(a_{hl}(\cdot,t))_{B}=\frac{1}{|B|}\int_{B}a_{hl}(x,t)\,dx\,ds\quad\text{for
every $d$-ball $B\subseteq\mathbb{R}^{N+1}$}.
\]
We then observe that, since the functions $a_{hl}\in L^{\infty}\left(
\mathbb{R}^{N+1}\right)  $, by Fubini's theorem we see that $\gamma_{hl}$ is
measurable for every $1\leq h,l\leq q$; on account of this fact, and since
$A_{0}=(a_{hl})_{hl}$ fulfills assumption \textbf{(H1)}, we easily conclude
that also the chosen matrix $\overline{A}=(\gamma_{hl})_{hl}$ satisfies
\textbf{(H1)}, and we can exploit the above estimates
\eqref{proof 3}\thinspace-\thinspace\eqref{proof 4} with this choice of $A$.
\vspace*{0.1cm}

As to estimate \eqref{proof 4}, we notice that by Definition \ref{Def VMO} we
have
\[
\int_{B_{kr}(\overline{\xi})\cap B_{R}(\xi^{\ast})}\big|a_{hl}(x,t)-\gamma
_{hl}(t)\big\vert\,dxdt\leq%
\begin{cases}
|B_{R}(\xi^{\ast})|a^{\sharp}\left(  R\right)   & \text{if $kr\geq R$%
}\\[0.15cm]%
|B_{kr}(\overline{\xi})|a^{\sharp}\left(  R\right)   & \text{if $kr\leqslant
R.$}%
\end{cases}
\]
On the other hand, since we are assuming that $B_{r}(\overline{\xi})$ and
$B_{R}(\xi^{\ast})$ intersect, in the case $kr\geq R$ we can write, by the
doubling condition,%
\[
\left\vert B_{R}(\xi^{\ast})\right\vert \leq\left\vert B_{kr}(\xi^{\ast
})\right\vert \leq c\left\vert B_{kr}\left(  \overline{\xi}\right)
\right\vert ;
\]
as a consequence, we obtain
\[
\int_{B_{kr}(\overline{\xi})\cap B_{R}(\xi^{\ast})}\big|a_{hl}(x,t)-\gamma
_{hl}(t)\big\vert\,dxdt\leq c|B_{kr}(\overline{\xi})|a^{\sharp}\left(
R\right)  .
\]
Summing up, estimate \eqref{proof 4} with this choice of $\overline{\emph{A}}$
boils down to%
\begin{equation}%
\begin{split}
&  \int_{B_{kr}(\overline{\xi})}|\overline{\mathcal{L}}u(\xi)-\mathcal{L}%
u(\xi)|^{p}\,d\xi\\
&  \qquad\leq c\,|B_{kr}(\overline{\xi})|^{1/\beta}a^{\#}\left(  R\right)
^{1/\beta}\sum_{h,l=1}^{q}\left(  \int_{B_{kr}(\overline{\xi})\cap B_{R}%
(\xi^{\ast})}|\partial_{x_{h}x_{l}}^{2}u(\xi)|^{p\alpha}\,d\xi\right)
^{1/\alpha}.
\end{split}
\label{eq:stimaprecisaconA}%
\end{equation}
With estimate \eqref{eq:stimaprecisaconA} at hand, we are now ready to
conclude the proof of the theorem: indeed, by combining (\ref{proof 2b}),
(\ref{proof 3}), \thinspace(\ref{eq:stimaprecisaconA}), we obtain%
\begin{equation}%
\begin{split}
&  \frac{1}{|B_{r}(\overline{\xi})|}\int_{B_{r}(\overline{\xi})}\left\vert
\partial_{x_{i}x_{j}}^{2}u(\xi)-(\partial_{x_{i}x_{j}}^{2}u)_{B_{r}%
(\overline{\xi})}\right\vert \,d\xi\\
&  \qquad\leq\frac{c}{k}\left\{  \sum_{i,j=1}^{q}\mathcal{M}(\partial
_{x_{i}x_{j}}^{2}u)(\xi_{0})+\mathcal{M}\left(  \mathcal{L}u\right)  \left(
\xi_{0}\right)  \right\}  +ck^{\frac{Q+2}{p}}\big(\mathcal{M}(|\mathcal{L}%
u|^{p})(\xi_{0})\big)^{1/p}\\
&  \qquad\qquad+ck^{\frac{Q+2}{p}}|B_{kr}(\overline{\xi})|^{\frac{1}{p\beta
}-\frac{1}{p}}a^{\sharp}\left(  R\right)  ^{1/p\beta}\times\\
&  \qquad\qquad\qquad\times\sum_{h,l=1}^{q}\Big(\int_{B_{kr}(\overline{\xi
})\cap B_{R}(\xi^{\ast})}|\partial_{x_{h}x_{l}}^{2}u(\xi)|^{p\alpha}%
\,d\xi\Big)^{1/p\alpha}%
\end{split}
\label{eq:toboundlastline}%
\end{equation}
furthermore, recalling that $\alpha^{-1}+\beta^{-1}=1$ and using
\eqref{maximal HL} (note that $\xi_{0}\in B_{kr}(\overline{\xi})$), we can
bound the the last line of \eqref{eq:toboundlastline} as
\begin{equation}%
\begin{split}
&  |B_{kr}(\overline{\xi})|^{\frac{1}{p\beta}-\frac{1}{p}}a^{\sharp}\left(
R\right)  ^{1/p\beta}\sum_{h,l=1}^{q}\Big(\int_{B_{kr}(\overline{\xi})\cap
B_{R}(\xi^{\ast})}|\partial_{x_{h}x_{l}}^{2}u(\xi)|^{p\alpha}\,d\xi
\Big)^{1/p\alpha}\\
&  \qquad=a^{\sharp}\left(  R\right)  ^{1/p\beta}\sum_{h,l=1}^{q}\Big(\frac
{1}{|B_{kr}(\overline{\xi})|}\int_{B_{kr}(\overline{\xi})\cap B_{R}(\xi^{\ast
})}|\partial_{x_{h}x_{l}}^{2}u(\xi)|^{p\alpha}\,d\xi\Big)^{1/p\alpha}\\
&  \qquad\leq a^{\sharp}\left(  R\right)  ^{1/p\beta}\sum_{h,l=1}%
^{q}\big(\mathcal{M}(|\partial_{x_{i}x_{j}}^{2}u|^{p\alpha})(\xi
_{0})\big)^{1/p\alpha},
\end{split}
\label{eq:boundlastline}%
\end{equation}
for some $c$ also depending on $\nu.$ Note also that in the right-hand side of
(\ref{eq:toboundlastline}), the term%
\[
\frac{c}{k}\mathcal{M}\left(  \mathcal{L}u\right)  \left(  \xi_{0}\right)
\text{ can be absorbed in }ck^{\frac{Q+2}{p}}\left(  \left(  \mathcal{M}%
\left(  \left\vert \mathcal{L}u\right\vert ^{p}\right)  \right)  \left(
\xi_{0}\right)  \right)  ^{1/p}\text{ }%
\]
since $k$ is large and for every ball $B_{\rho}\ni\xi_{0}$ we can write%
\[
\frac{1}{\left\vert B_{\rho}\right\vert }\int_{B_{\rho}}\left\vert
\mathcal{L}u\left(  x\right)  \right\vert dx\leq\left(  \frac{1}{\left\vert
B_{\rho}\right\vert }\int_{B_{\rho}}\left\vert \mathcal{L}u\left(  x\right)
\right\vert ^{p}dx\right)  ^{1/p}\leq\left(  \left(  \mathcal{M}\left(
\left\vert \mathcal{L}u\right\vert ^{p}\right)  \right)  \left(  \xi
_{0}\right)  \right)  ^{1/p},
\]
hence%
\begin{equation}
\mathcal{M}\left(  \mathcal{L}u\right)  \left(  \xi_{0}\right)  \leq\left(
\left(  \mathcal{M}\left(  \left\vert \mathcal{L}u\right\vert ^{p}\right)
\right)  \left(  \xi_{0}\right)  \right)  ^{1/p}.\label{proof 5}%
\end{equation}

Gathering \eqref{eq:toboundlastline}\thinspace, \thinspace
\eqref{eq:boundlastline} and (\ref{proof 5}), we finally obtain the desired
\eqref{eq:meanEstimVMO}, and the proof is complete.
\end{proof}

\subsection{$L^{p}$ estimates\label{subsec Lp}}

With Theorem \ref{Thm 2} at hand, we can easily prove \emph{local $L^{p}$
esti\-ma\-tes} for functions $u\in C_{0}^{\infty}(\mathbb{R}^{N+1})$ with
\emph{small support}:

\begin{theorem}
\label{Thm local small Lp}For every $p\in(1,\infty)$ there exist constants
$R,c>0$ such that, for every ball $B_{R}(\xi^{\ast})$ in $\mathbb{R}^{N+1}$
and every $u\in C_{0}^{\infty}(B_{R}(\xi^{\ast}))$, we have
\begin{equation}
\sum_{h,l=1}^{q}\left\Vert u_{x_{h}x_{l}}\right\Vert _{L^{p}\left(
B_{R}\left(  \xi^{\ast}\right)  \right)  }\leq c\left\Vert \mathcal{L}%
u\right\Vert _{L^{p}\left(  B_{R}\left(  \xi^{\ast}\right)  \right)  }.
\label{final local}%
\end{equation}
The constants $c,R$ in \eqref{final local} depends on the numbers $p,\nu,B$
and on the function $a^{\sharp}$ in \eqref{mod VMO coeff}, but do not depend
on $\xi^{\ast}.$
\end{theorem}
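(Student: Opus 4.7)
The overall strategy is classical (Krylov-type): use Theorem \ref{Thm 2} to dominate the sharp maximal function of $\partial_{x_ix_j}^2 u$, apply Fefferman-Stein, and then use the $VMO_x$ assumption to absorb the troublesome term on the right. The mild twist is a standard ``lower exponent'' trick: I would apply Theorem \ref{Thm 2} not with the target exponent $p$ but with an auxiliary exponent $p_0 \in (1,p)$, so that the $\mathcal{M}(|\cdot|^{p_0\alpha})^{1/(p_0\alpha)}$ term can be controlled by the Hardy--Littlewood maximal inequality in $L^p$.

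More precisely, fix $p \in (1,\infty)$ and choose $p_0,\alpha,\beta \in (1,\infty)$ with $\alpha^{-1}+\beta^{-1}=1$, such that $1 < p_0 < p_0\alpha < p$ (e.g.\ $p_0$ close to $1$ and $\alpha$ close to $1$). For fixed $\xi_0 \in \mathbb{R}^{N+1}$ and every ball $B_r(\bar\xi) \ni \xi_0$, Theorem \ref{Thm 2} (with exponent $p_0$) bounds the mean oscillation of $\partial_{x_ix_j}^2 u$ over $B_r(\bar\xi)$ by quantities depending only on $\xi_0$ (and the fixed $k$, $R$). Taking the supremum over all such $B_r(\bar\xi)$ yields, by Definition \ref{Def sharp}, the pointwise bound
\begin{align*}
(\partial_{x_ix_j}^2 u)^{\#}(\xi_0) &\leq \frac{c}{k}\sum_{h,l=1}^q \mathcal{M}(\partial_{x_hx_l}^2 u)(\xi_0) + ck^{\frac{Q+2}{p_0}}\bigl(\mathcal{M}(|\mathcal{L}u|^{p_0})(\xi_0)\bigr)^{1/p_0}\\
&\qquad + ck^{\frac{Q+2}{p_0}}(a_R^\sharp)^{1/(p_0\beta)}\sum_{h,l=1}^q \bigl(\mathcal{M}(|\partial_{x_hx_l}^2 u|^{p_0\alpha})(\xi_0)\bigr)^{1/(p_0\alpha)}.
\end{align*}

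Next I would take $L^p$-norms. Since $u \in C_0^\infty(B_R(\xi^*))$, the functions $\partial_{x_hx_l}^2 u$ are in $L^\infty$ with bounded support, so Theorem \ref{Thm Fefferman Stein} gives $\|\partial_{x_ix_j}^2 u\|_{L^p} \leq C_p\|(\partial_{x_ix_j}^2 u)^\#\|_{L^p}$. On the right-hand side, Theorem \ref{Thm Maximal} yields $\|\mathcal{M}(\partial_{x_hx_l}^2 u)\|_{L^p} \leq c\|\partial_{x_hx_l}^2 u\|_{L^p}$ (since $p>1$); for the second term, raising to the $p_0$-th power and using $p/p_0 > 1$, one gets $\|(\mathcal{M}(|\mathcal{L}u|^{p_0}))^{1/p_0}\|_{L^p} \leq c\|\mathcal{L}u\|_{L^p}$; and for the last term, the choice $p_0\alpha < p$ ensures $p/(p_0\alpha) > 1$, whence $\|(\mathcal{M}(|\partial_{x_hx_l}^2 u|^{p_0\alpha}))^{1/(p_0\alpha)}\|_{L^p} \leq c\|\partial_{x_hx_l}^2 u\|_{L^p}$. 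Summing over $i,j$ leads to
\[
\sum_{i,j=1}^q\|\partial_{x_ix_j}^2 u\|_{L^p} \leq \Bigl[\frac{c_1}{k} + c_2 k^{\frac{Q+2}{p_0}}(a_R^\sharp)^{1/(p_0\beta)}\Bigr]\sum_{h,l=1}^q\|\partial_{x_hx_l}^2 u\|_{L^p} + c_3 k^{\frac{Q+2}{p_0}}\|\mathcal{L}u\|_{L^p}.
\]

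Finally, I choose the free parameters to absorb the terms involving second derivatives from the right-hand side. First pick $k = k(p,\nu,B)$ large enough that $c_1/k \leq 1/4$. With this $k$ now fixed, the factor $c_2 k^{(Q+2)/p_0}$ is a constant depending only on $p$, $\nu$, $B$; then, using assumption \textbf{(H3)} (i.e.\ $a_R^\sharp \to 0$ as $R \to 0$), choose $R = R(p,\nu,B,a_\cdot^\sharp) > 0$ small enough that $c_2 k^{(Q+2)/p_0}(a_R^\sharp)^{1/(p_0\beta)} \leq 1/4$. Absorbing the resulting $\frac{1}{2}\sum\|\partial_{x_hx_l}^2 u\|_{L^p}$ into the left-hand side yields the desired estimate \eqref{final local} (noting that the integrals are supported in $B_R(\xi^*)$ since $u$ is). The only nontrivial point is the lower-exponent trick ensuring the HL maximal inequality applies to all three terms simultaneously; the absorption is then essentially bookkeeping.
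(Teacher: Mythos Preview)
Your proposal is correct and follows essentially the same approach as the paper, which defers to \cite[Thm.\,4.2]{BB VMO}: combine the mean-oscillation estimate of Theorem~\ref{Thm 2} with the Fefferman--Stein and Hardy--Littlewood maximal inequalities, then use \textbf{(H3)} to absorb the $a_R^\sharp$ term. The lower-exponent trick (applying Theorem~\ref{Thm 2} with $p_0$ and $\alpha$ chosen so that $p_0\alpha<p$) is precisely what makes the maximal inequality applicable to all three terms, and is the standard device here.
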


\begin{proof}
The present theorem can be established by arguing \emph{exactly} as in the
proof of \cite[Thm.\,4.2]{BB VMO}; the idea is to combine the mean-oscillation
estima\-tes in The\-orem \ref{Thm 2} (which is the analog of \cite[Thm.\,4.1]%
{BB VMO}) with the Hardy-Lit\-tle\-wood maximal inequality and the
Fefferman-Stein-like maximal inequality for spa\-ces of ho\-mo\-ge\-neo\-us
type (see Theorem \ref{Thm Maximal} and Theorem \ref{Def sharp},
respectively), and to exploit as\-sumption \textbf{(H3)} to take to the left
hand side the term involving $a^{\sharp}\left(  R\right)  $ in \eqref{eq:meanEstimVMO}.
\end{proof}

Starting from Theorem \ref{Thm local small Lp}, we can then establish the
following global a priori e\-sti\-mates, which are a `less refined' version of
Theorem \ref{Thm main a priori estimates}.

\begin{theorem}
\label{Thm stima globale ver1} For every $p\in(1,\infty)$ there exists $c>0$,
depending on $p,\nu,$ $B$ and on the function $a_{\cdot}^{\sharp}$, such that,
for every $u\in C_{0}^{\infty}(\mathbb{R}^{N+1})$, one has
\[
\left\Vert u\right\Vert _{W_{X}^{2,p}(\mathbb{R}^{N+1})}\leq c\big\{\left\Vert
\mathcal{L}u\right\Vert _{L^{p}(\mathbb{R}^{N+1})}+\left\Vert u\right\Vert
_{W_{X}^{1,p}(\mathbb{R}^{N+1})}\big\}.
\]
Moreover, for every $R>0$ and every $u\in C^{\infty}(\mathbb{R}^{N+1})$ we
have%
\[
\left\Vert u\right\Vert _{W_{X}^{2,p}(B_{R}(0))}\leq c_{1}\big\{\left\Vert
\mathcal{L}u\right\Vert _{L^{p}(B_{2R}(0))}+\left\Vert u\right\Vert
_{W_{X}^{1,p}(B_{2R}(0))}\big\},
\]
where $c_{1}>0$ is a constant independent of $R$.
\end{theorem}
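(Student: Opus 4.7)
The plan is to derive the global $W_X^{2,p}$ estimate from the local small-support estimate of Theorem \ref{Thm local small Lp} by a standard cutoff-plus-covering argument, in the spirit of \cite[Sec.\,4.3]{BB VMO}. Fix the radius $R>0$ supplied by Theorem \ref{Thm local small Lp}. The first step is to build, uniformly in $\xi^{*}\in\mathbb{R}^{N+1}$, an adapted cutoff $\phi_{\xi^{*}}\in C_{0}^{\infty}(B_{R}(\xi^{*}))$, equal to $1$ on $B_{R/2}(\xi^{*})$ and satisfying $|\partial_{x_{i}}\phi_{\xi^{*}}|\le c/R$, $|\partial_{x_{i}x_{j}}^{2}\phi_{\xi^{*}}|\le c/R^{2}$, $|Y\phi_{\xi^{*}}|\le c/R^{2}$. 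This is routine: start from a fixed $\phi_{0}\in C_{0}^{\infty}(B_{1}(0))$ with $\phi_{0}\equiv 1$ on $B_{1/2}(0)$, and set $\phi_{\xi^{*}}(\xi):=\phi_{0}\bigl(D(1/R)((\xi^{*})^{-1}\circ\xi)\bigr)$; the stated bounds follow from the left-invariance of $\circ$ and the fact that $\partial_{x_{i}}$ and $Y$ are $D(\lambda)$-homogeneous of degrees $1$ and $2$ respectively.

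Next, I would invoke the Vitali-type covering theorem in the space of homogeneous type $(\mathbb{R}^{N+1},d,|\cdot|)$ from \cite{BB VMO} to produce points $\{\xi_{k}^{*}\}$ such that $\{B_{R/2}(\xi_{k}^{*})\}_{k}$ covers $\mathbb{R}^{N+1}$ and the enlarged family $\{B_{R}(\xi_{k}^{*})\}_{k}$ has uniformly bounded overlap. Set $\phi_{k}:=\phi_{\xi_{k}^{*}}$. For $u\in C_{0}^{\infty}(\mathbb{R}^{N+1})$, apply Theorem \ref{Thm local small Lp} to $u\phi_{k}\in C_{0}^{\infty}(B_{R}(\xi_{k}^{*}))$, using the product-rule identity
\[
\mathcal{L}(u\phi_{k})=\phi_{k}\,\mathcal{L}u\;+\;u\,\mathcal{L}\phi_{k}\;+\;2\sum_{i,j=1}^{q}a_{ij}\,\partial_{x_{i}}\phi_{k}\,\partial_{x_{j}}u,
\]
together with $|a_{ij}|\le\nu^{-1}$ and the cutoff bounds above. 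Since $\phi_{k}\equiv 1$ on $B_{R/2}(\xi_{k}^{*})$, this bounds $\sum_{h,l}\|\partial_{x_{h}x_{l}}^{2}u\|_{L^{p}(B_{R/2}(\xi_{k}^{*}))}$ by $\|\mathcal{L}u\|_{L^{p}(B_{R}(\xi_{k}^{*}))}$ plus a constant (depending on $R$ and $\nu$) times $\|u\|_{W_{X}^{1,p}(B_{R}(\xi_{k}^{*}))}$. Raising to the $p$-th power, summing over $k$, and using the covering and the bounded overlap, delivers the desired global $L^{p}$ control of the $\partial_{x_{h}x_{l}}^{2}u$. The bound on $\|Yu\|_{L^{p}}$ is then immediate from $Yu=\mathcal{L}u-\sum_{i,j}a_{ij}\partial_{x_{i}x_{j}}^{2}u$ and $|a_{ij}|\le\nu^{-1}$.

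For the second, localized assertion, I would pick a single cutoff $\psi$ built as above with $\psi\equiv 1$ on $B_{R}(0)$ and $\mathrm{supp}\,\psi\subseteq B_{2R}(0)$; then $v:=u\psi\in C_{0}^{\infty}(\mathbb{R}^{N+1})$, and applying the first assertion to $v$ yields $\|v\|_{W_{X}^{2,p}}\le c\{\|\mathcal{L}v\|_{L^{p}}+\|v\|_{W_{X}^{1,p}}\}$. Since $v\equiv u$ on $B_{R}(0)$, the left-hand side dominates $\|u\|_{W_{X}^{2,p}(B_{R}(0))}$, while the same commutator identity above lets me trade $\mathcal{L}v$ for $\mathcal{L}u$ and lower-order terms on $B_{2R}(0)$; the $R$-dependent factors produced by the cutoff derivatives are harmlessly absorbed into the $W_{X}^{1,p}(B_{2R}(0))$-norm of $u$ on the right, giving a constant $c_{1}$ independent of $R$.

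The main (mild) obstacle is ensuring that the cutoffs have the correct \emph{anisotropic} derivative bounds: because $Y$ has $D(\lambda)$-homogeneous degree $2$, one needs $|Y\phi_{\xi^{*}}|\le c/R^{2}$ rather than the naive $c/R$, and this is exactly what the dilation structure of $\mathbb{G}$ provides. The other structural ingredient, a Vitali cover of $(\mathbb{R}^{N+1},d)$ with uniformly bounded overlap, is already established in \cite{BB VMO}. Beyond these two points, the argument is entirely soft bookkeeping.
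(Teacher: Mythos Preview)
Your proposal is correct and follows essentially the same route as the paper: the paper also derives Theorem \ref{Thm stima globale ver1} by applying Theorem \ref{Thm local small Lp} to $u\phi^{\xi}$, where the cutoffs $\phi^{\xi}$ are built exactly as you describe (via left-translation and the dilations $D(\lambda)$, see Lemma \ref{Lemma cutoff}), and then summing over the bounded-overlap covering of Proposition \ref{Prop overlapping} (which is precisely the covering result from \cite{BB VMO} you invoke). Your treatment of the second, localized assertion via a single cutoff and the first assertion is also the intended argument; the only point to be slightly more careful about is your claim that the $R$-dependent cutoff factors are ``harmlessly absorbed'' to give $c_1$ independent of $R$ --- this is immediate for $R\ge 1$ (since then $1/R,\,1/R^{2}\le 1$), which is the regime actually needed in the subsequent approximation argument, but for small $R$ the absorption is not automatic from what you wrote.
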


According to \cite{BB VMO}, in order to prove the above result we need to show
the existence of suitable cutoff functions and of a suitable covering of
$\mathbb{R}^{N+1}$. \medskip

We begin with the existence of an ad-hoc family of cutoff functions.

\begin{lemma}
\label{Lemma cutoff} For every fixed $R>0$, there exist a constant $c>0$ and a
fa\-mi\-ly $\{\phi^{\xi}(\cdot)\}_{\xi\in\mathbb{R}^{N+1}}$ of cutoff
functions in $\mathbb{R}^{N+1}$ such that%
\begin{align*}
\mathrm{i)}  &  \,\,\phi^{\xi}\in C_{0}^{\infty}(B_{2R}(\xi))\\
\mathrm{ii)}  &  \,\,\text{$\phi^{\xi}=1$ on $B_{R}(\xi)$ and $0\leq\phi^{\xi
}\leq1$ on $\mathbb{R}^{N+1}$};\\
\mathrm{iii)}  &  \,\,\sup_{\eta}|\phi^{\xi}(\eta)|+\sum_{i=1}^{q}\sup_{\eta
}|\partial_{y_{i}}\phi^{\xi}(\eta)|+\sum_{i,j=1}^{q}\sup_{\eta}|\partial
_{y_{i}y_{j}}^{2}\phi^{\xi}(\eta)|+\sup_{\eta}|Y_{y}\phi^{\xi}(\eta)|\leq c
\end{align*}
\emph{(}where $\eta=(y,s)$\emph{)}. Here, the relevant fact is that the
constant $c$ is independent of $\xi$ \emph{(}while it may depend on
$R$\emph{)}.
\end{lemma}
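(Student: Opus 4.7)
The plan is to build one cutoff at the origin and then transport it by left translations on $\mathbb{G}=(\mathbb{R}^{N+1},\circ)$, exploiting the identity $B_r(\xi)=\xi\circ B_r(0)$ from \eqref{eq:balltraslD} together with the left-invariance of the vector fields $\partial_{x_1},\ldots,\partial_{x_q},Y$.

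First I would construct a single smooth cutoff $\phi^0\in C_0^{\infty}(B_{2R}(0))$, $0\leq\phi^0\leq 1$, with $\phi^0\equiv 1$ on $B_R(0)$. This is a purely Euclidean construction: since $\rho(x,t)=\|x\|+\sqrt{|t|}$ is continuous with respect to the Euclidean topology, the metric balls $B_R(0)=\{\rho<R\}$ and $B_{2R}(0)=\{\rho<2R\}$ are Euclidean open sets, $\overline{B_R(0)}^{\,\mathrm{eucl}}\subseteq\{\rho\leq R\}$ is Euclidean compact, and it is disjoint from $\{\rho\geq 2R\}$. A standard mollification of the indicator of a slightly enlarged copy of $\overline{B_R(0)}$ then yields the desired $\phi^0$.

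Next, for every $\xi\in\mathbb{R}^{N+1}$ I would define
\[
\phi^{\xi}(\eta):=\phi^0(\xi^{-1}\circ\eta),\qquad \eta\in\mathbb{R}^{N+1}.
\]
Properties (i) and (ii) follow at once from \eqref{eq:balltraslD}: $\eta\in B_r(\xi)$ iff $\xi^{-1}\circ\eta\in B_r(0)$, so the support of $\phi^{\xi}$ is contained in $B_{2R}(\xi)$ and $\phi^{\xi}\equiv 1$ on $B_R(\xi)$, while $0\leq\phi^{\xi}\leq 1$ is inherited from $\phi^0$.

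The key point for (iii) is the uniformity of the estimates in $\xi$, and this is where one uses the explicit structure of the group. Writing $\xi=(x_0,t_0)$ and $\eta=(y,s)$, formula \eqref{eq:convolutionG} gives $\xi^{-1}\circ\eta=(y-E(s-t_0)x_0,\,s-t_0)$, so the $y$-components of $L_{\xi^{-1}}\eta$ depend on $y$ through the identity. By the chain rule this immediately yields
\[
\partial_{y_i}\phi^{\xi}(\eta)=(\partial_{y_i}\phi^0)(\xi^{-1}\circ\eta),\qquad
\partial^2_{y_iy_j}\phi^{\xi}(\eta)=(\partial^2_{y_iy_j}\phi^0)(\xi^{-1}\circ\eta),
\]
for $i,j=1,\ldots,q$. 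For the drift term I would invoke the left-invariance of $Y$ with respect to $\circ$ (property (a) of the model operator recalled in Section \ref{sec inizio}): $Y(\phi^0\circ L_{\xi^{-1}})=(Y\phi^0)\circ L_{\xi^{-1}}$. Taking suprema in $\eta$ gives
\[
\sup_{\eta}|\phi^{\xi}(\eta)|+\sum_{i=1}^{q}\sup_{\eta}|\partial_{y_i}\phi^{\xi}|+\sum_{i,j=1}^{q}\sup_{\eta}|\partial^2_{y_iy_j}\phi^{\xi}|+\sup_{\eta}|Y_y\phi^{\xi}|\leq c(\phi^0),
\]
with a constant that depends only on $\phi^0$, hence only on $R$. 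There is no real obstacle here; the only thing one has to be mildly careful about is that the differentials of $L_{\xi^{-1}}$ in the $y$-directions are simply the identity on the $y$-block (the nontrivial $s$-dependence $E(s-t_0)x_0$ plays no role when one differentiates in $y$), which is precisely why the bounds do not see the position $\xi$.
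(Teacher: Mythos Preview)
Your proof is correct and follows essentially the same approach as the paper: build a single cutoff at the origin and transport it by left translation, using the left-invariance of $\partial_{y_1},\ldots,\partial_{y_q},Y$ to obtain $\xi$-independent bounds. The only cosmetic difference is that the paper fixes a cutoff $\psi$ adapted to $B_1(0)\subset B_2(0)$ and then sets $\phi^{\xi}(\eta)=\psi\big(D(1/R)(\xi^{-1}\circ\eta)\big)$, using the homogeneity of the vector fields to make the $R$-dependence of $c$ explicit ($R^{-1}$ for first derivatives, $R^{-2}$ for second derivatives and $Y$); you instead build $\phi^0$ directly at scale $R$, which is equally valid since $R$ is fixed.
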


\begin{proof}
Let $\psi\in C_{0}^{\infty}(\mathbb{R}^{N+1})$ be a fixed cutoff function such that

\begin{itemize}
\item[a)] $0\leq\psi\leq1$ on $\mathbb{R}^{N+1}$;

\item[b)] $\psi= 1$ on $B_{1}(0)$ and $\psi= 0$ out of $B_{2}(0)$
\end{itemize}

Given any $\xi\in\mathbb{R}^{N+1}$, we define the function
\[
\phi^{\xi}(\eta)=\psi\big(D(1/R)(\xi^{-1}\circ\eta)\big),
\]
and we claim that it satisfies the required properties i)\thinspace-\thinspace iii).

Clearly, we have $\psi^{\xi}\in C_{0}^{\infty}(\mathbb{R}^{N+1})$ and
$0\leq\psi^{\xi}\leq1$; moreover, using \eqref{d}-\eqref{eq:balltraslD} (and
taking into account property b) of $\psi$), we get

\begin{itemize}
\item $\phi^{\xi}= 1$ on $\xi\circ D(R)(B_{1}(0)) = B_{R}(\xi)$;

\item $\psi^{\xi}= 0$ out of $\xi\circ D(R)(B_{2}(0)) = B_{2R}(\xi)$.
\end{itemize}

Finally, recalling that $\partial_{y_{i}}$ (with $1\leq i\leq q$) and $Y$ are
left-invariant and $D(\lambda)$-ho\-mo\-geneous of degree $1$ and $2$,
respectively, for every $\eta\in\mathbb{R}^{N+1}$ we obtain
\begin{align*}
&  |\phi^{\xi}(\eta)|+\sum_{i=1}^{q}|\partial_{y_{i}}\phi^{\xi}(\eta
)|+\sum_{i,j=1}^{q}|\partial_{y_{i}y_{j}}^{2}\phi^{\xi}(\eta)|+|Y_{y}\phi
^{\xi}(\eta)|\\
&  \qquad(\text{setting $\zeta=D(1/R)(\xi^{-1}\circ\eta)$})\\
&  \qquad=|\psi(\zeta)|+R^{-1}\sum_{i=1}^{q}|(\partial_{i}\psi)(\zeta
)|+R^{-2}\Big(\sum_{i,j=1}^{q}|(\partial_{ij}^{2}\psi)(\zeta)|+|(Y\psi
)(\zeta)|\Big)\\
&  \qquad\leq c,
\end{align*}
where $c>0$ depends on $\psi$ and $R$. This ends the proof.
\end{proof}

As for the existence of a suitable covering of $\mathbb{R}^{N+1}$, instead, we
recall the fol\-low\-ing \emph{co\-ve\-ring theorem in spaces of homogeneous
type}.

\begin{proposition}
[{See \cite[Prop.\,2.19]{BB VMO}}]\label{Prop overlapping} Let $(X,d,\mu)$ be
a space of homogeneous type. For every fixed $R>0$ there exists a family
\[
\mathcal{B}=\{B(x_{\alpha},R)\}_{\alpha\in A}%
\]
of $d$-balls in $\mathbb{R}^{N+1}$ satisfying the following properties:

\begin{itemize}
\item[i)] $\bigcup_{\alpha\in A}B(x_{\alpha},R) =X$; \vspace*{0.1cm}

\item[ii)] for every $H>1$, the family of dilated balls%
\[
\mathcal{B}^{H}=\{ B(x_{\alpha},HR)\} _{\alpha\in A}
\]
has the \emph{bounded overlapping property}, that is, there exists a constant
$N$ \emph{(}depending on $H$ and the constants of $X$, but independent of
$R$\emph{)} such that every point of $X$ belongs to at most $N$ balls
$B(x_{\alpha},HR) $.
\end{itemize}
\end{proposition}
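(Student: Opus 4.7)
The plan is to construct $\mathcal{B}$ via a Vitali-type maximal argument adapted to the quasisymmetric setting. Using Zorn's lemma on the poset of $R$-separated subsets of $X$ (i.e., subsets $S$ with $d(x,y)\geq R$ for every pair of distinct $x,y\in S$), ordered by inclusion, I would extract a \emph{maximal} such family $\{x_\alpha\}_{\alpha\in A}$; the chain condition is trivially verified since the union of any chain of $R$-separated subsets is again $R$-separated. Setting $\mathcal{B}:=\{B(x_\alpha,R)\}_{\alpha\in A}$, property i) then follows at once from maximality: if some $y\in X$ satisfied $d(y,x_\alpha)\geq R$ for every $\alpha$, then $\{x_\alpha\}_\alpha\cup\{y\}$ would still be $R$-separated, a contradiction; hence every $y\in X$ lies in some $B(x_\alpha,R)$.

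To prove property ii), my key intermediate observation is that the \emph{smaller} balls $B(x_\alpha,r)$ with $r:=R/[2\boldsymbol{\kappa}(\boldsymbol{\kappa}+1)]$ are pairwise disjoint. Indeed, if $z\in B(x_\alpha,r)\cap B(x_\beta,r)$, then by \eqref{quasitriangle} and \eqref{quasisymmetric},
\[
d(x_\alpha,x_\beta)\leq\boldsymbol{\kappa}\bigl(d(x_\alpha,z)+d(z,x_\beta)\bigr)\leq\boldsymbol{\kappa}\bigl(\boldsymbol{\kappa}\,d(z,x_\alpha)+d(z,x_\beta)\bigr)<\boldsymbol{\kappa}(\boldsymbol{\kappa}+1)r\leq R/2,
\]
contradicting the $R$-separation of $\{x_\alpha\}$.

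Now I would fix $H>1$ and $y\in X$, and study $A_y:=\{\alpha\in A:y\in B(x_\alpha,HR)\}$. Two routine applications of \eqref{quasitriangle}-\eqref{quasisymmetric} give, for every $\alpha\in A_y$, the chain of inclusions $B(x_\alpha,r)\subseteq B(y,c_1 R)\subseteq B(x_\alpha,c_2 R)$ with constants $c_1=c_1(H,\boldsymbol{\kappa})$ and $c_2=c_2(H,\boldsymbol{\kappa})$ \emph{independent of $R$} (concretely, $c_1=\boldsymbol{\kappa}(1+\boldsymbol{\kappa}H)$ and $c_2=\boldsymbol{\kappa}(c_1+H)$). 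By the global doubling condition, iterated a bounded number of times depending only on the ratio $c_2 R/r$, and hence only on $H$ and $\boldsymbol{\kappa}$, one obtains $\mu(B(x_\alpha,c_2R))\leq C(H,\boldsymbol{\kappa})\,\mu(B(x_\alpha,r))$, whence
\[
\mu(B(x_\alpha,r))\geq C(H,\boldsymbol{\kappa})^{-1}\mu(B(y,c_1 R))\qquad\text{for every } \alpha\in A_y.
\]
On the other hand, the disjointness of $\{B(x_\alpha,r)\}_{\alpha\in A_y}$ together with $B(x_\alpha,r)\subseteq B(y,c_1 R)$ yields $\sum_{\alpha\in A_y}\mu(B(x_\alpha,r))\leq\mu(B(y,c_1 R))$; combining this with the lower bound gives $|A_y|\leq C(H,\boldsymbol{\kappa})=:N$, as required.

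The only real point of attention is the careful bookkeeping needed to guarantee that the overlap constant $N$ depends only on $H$ and on the intrinsic structural constants of $(X,d,\mu)$ (namely $\boldsymbol{\kappa}$ and the doubling constant), but not on $R$. This $R$-independence is automatic since the argument is internally scale-invariant: every geometric condition above involves only ratios like $r/R$ or $c_j R/r$, which depend solely on $\boldsymbol{\kappa}$ and $H$. The quasisymmetry of $d$ (as opposed to genuine symmetry) is absorbed seamlessly by the extra factors of $\boldsymbol{\kappa}$ displayed in the inequalities, and does not create any essential obstacle.
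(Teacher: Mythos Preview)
The paper does not supply its own proof of this proposition; it merely quotes the result from \cite[Prop.\,2.19]{BB VMO}. Your argument is the standard Vitali-type construction (maximal $R$-separated net, disjointness of the shrunken balls, doubling to count overlaps), and it is correct for spaces of homogeneous type in the usual Coifman--Weiss sense where the quasidistance is symmetric.

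One small caveat on your handling of the genuinely quasisymmetric case. For property i) you argue: ``if $d(y,x_\alpha)\geq R$ for all $\alpha$, then $\{x_\alpha\}\cup\{y\}$ is still $R$-separated''. But with your own definition of $R$-separated (the inequality for \emph{every} ordered pair), you would also need $d(x_\alpha,y)\geq R$, which does not follow from $d(y,x_\alpha)\geq R$ alone. The contrapositive of maximality therefore only yields some $\alpha$ with $d(y,x_\alpha)<R$ \emph{or} $d(x_\alpha,y)<R$; the second alternative gives merely $y\in B(x_\alpha,\boldsymbol{\kappa}R)$. This is easily repaired---either symmetrize $d$ as the paper does in Remark~\ref{rem:TheoremMoreGeneral}, or run the maximal-net argument with separation parameter $R/\boldsymbol{\kappa}$---but as written your deduction of i) has a gap in the non-symmetric setting. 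Your treatment of ii) is fine; the precise value of the shrinking factor $r$ may shift by a power of $\boldsymbol{\kappa}$ depending on which form of the quasi-triangle inequality one adopts, but this is harmless.
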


With Lemma \ref{Lemma cutoff} and Proposition \ref{Prop overlapping} at hand,
we can provide the

\begin{proof}
[Proof of Theorem \ref{Thm stima globale ver1}]The present theorem can be
established by arguing exactly as in the proof of \cite[Thm.\,4.3]{BB VMO} and
\cite[Cor.\,4.4]{BB VMO}; the idea is to apply the local $L^{p}$ estimates in
Theorem \ref{Thm local small Lp} to the function $u\phi^{\xi}$ (where
$\phi^{\xi}$ is as in Lemma \ref{Lemma cutoff}), and then to exploit
Proposition \ref{Prop overlapping}.
\end{proof}

Thanks to Theorem \ref{Thm stima globale ver1}, we can finally come to the

\begin{proof}
[Proof of Theorem \ref{Thm main a priori estimates}]Taking into account all
the results established so far, the present theorem can be demonstrated by
arguing exactly as in the proof of \cite[Thm. 1.4]{BB VMO}: first of all, one
extends the estimate in Theorem \ref{Thm stima globale ver1} to functions
$u\in W_{X}^{2,p}(\mathbb{R}^{N+1})$ by using the \emph{local approximation
of} $W_{X}^{2,p}(\mathbb{R}^{N+1})$ \emph{by smooth functions} (see
\cite[Thm.\,2.9]{BBbook}); then, one gets rid of the term
\[
\Vert u\Vert_{W_{X}^{1,p}(\mathbb{R}^{N+1})}%
\]
by exploiting the following known (Euclidean!) interpolation inequality,
\begin{equation}
\label{eq:interpIneq}\Vert\partial_{x_{i}}u\Vert_{L^{p}(\mathbb{R}^{N+1})}%
\leq\varepsilon\Vert\partial_{x_{i}x_{i}}^{2}u\Vert_{L^{p}(\mathbb{R}^{N+1}%
)}+\frac{c_{p}}{\varepsilon}\Vert u\Vert_{L^{p}(\mathbb{R}^{N+1})},
\end{equation}
holding true for every $p\in(1,\infty)$, every $u\in W_{X}^{2,p}%
(\mathbb{R}^{N+1})$, every $1\leq i,j\leq q$ and every $\varepsilon>0$, with a
constant $c_{p}>0$ only depending on $p$.
\end{proof}

\section{Existence results\label{Sec existence}}

In this last section we exploit the global $W_{X}^{2,p}$\thinspace-\thinspace
estimates contained in Theorem \ref{Thm main a priori estimates} to prove some
existence results for the equation
\[
\text{$\mathcal{L}u-\lambda u=f$ in $S_{T}=\mathbb{R}^{N}\times(-\infty,T),$}%
\]
with $f\in L^{p}\left(  S_{T}\right)  $, for $\lambda>0$ large enough, and
every $T\in(-\infty,+\infty]$, and for the Cauchy problem%
\[
\mathrm{(CP)}\qquad\quad%
\begin{cases}
\mathcal{L}u=f & \text{in $\mathbb{R}^{N}\times(0,T)$}\\
u(\cdot,0)=g & \text{in $\mathbb{R}^{N}$}%
\end{cases}
\]
with $f\in L^{p}\left(  \text{$\mathbb{R}^{N}\times(0,T)$}\right)  $, $g\in
W_{X}^{2,p}\left(  \mathbb{R}^{N}\right)  $ for every $T\in\left(
0,+\infty\right)  $ (see Section \ref{Subsec Cauchy} for the precise
definition of solution). In both cases, the solutions we obtain are actually
\emph{strong solutions}, that is, they be\-long to some $W_{X}^{2,p}%
$\thinspace-\thinspace space, and the equation $\mathcal{L}u=f$ holds
pointwise (a.e.).

In order to prove our results we mainly follow the approach by Krylov
\cite{KrylovBook} for parabolic equations, which essentially consists in the
following steps.

\begin{enumerate}
\item First of all, we refine the $W_{X}^{2,p}$\thinspace-\thinspace estimates
in Theorem \ref{Thm main a priori estimates} in two different directions: we
localize such estimates on any strip $S_{T}=\mathbb{R}^{N}\times(-\infty,T)$,
and we get rid of the term $\Vert u\Vert_{L^{p}}$ on the right-hand side (by
paying the price of considering the `perturbed operator' $\mathcal{L}-\lambda$
for some $\lambda>0$ large enough). \vspace*{0.1cm}

\item Using these refined estimates and the method of continuity, we then
prove the existence of a unique $W_{X}^{2,p}(S_{T})$\thinspace-\thinspace
solution of the equation
\[
\mathcal{L}u-\lambda u=f\quad\text{in $S_{T}$},
\]
for an arbitrary $f\in L^{p}\left(  \mathbb{R}^{N+1}\right)  $ (and for every
$-\infty<T\leq+\infty$). \vspace*{0.1cm}

\item Finally, using the above solvability result (and the refined esti\-mates
in point (1)), we establish the existence of a unique solution of the Cauchy
problem (CP).
\end{enumerate}

Throughout this section, we adopt the notation:

\begin{itemize}
\item given any $T\in(-\infty,+\infty]$, we set $S_{T}=\mathbb{R}^{N}%
\times(-\infty,T)$; \vspace*{0.1cm}

\item given any $T\in\left(  0,+\infty\right)  $, we set $\Omega
_{T}=\mathbb{R}^{N}\times\left(  0,T\right)  $.
\end{itemize}

\subsection{\noindent Refined $W_{X}^{2,p}$\thinspace-\thinspace
estimates\label{subsec refined}}

We start with the following theorem, localizing the global estimates of
Theorem \ref{Thm main a priori estimates} on any strip $S_{T}$.

\begin{theorem}
\label{Thm global strip} Let $\mathcal{L}$ be an operator as in \eqref{L},
assume that \emph{\textbf{(H1)}, \textbf{(H2)}, \textbf{(H3)}} hold, and let
$p\in(1,\infty)$.

Then, there exists a constant $c>0$ \emph{(}depending on $p$, the matrix $B$
in \eqref{B}, the number $\nu$ in \eqref{nu}, and the function $a^{\#}$ in
\eqref{mod VMO coeff}\emph{)} such that, for every $T\in(-\infty,+\infty]$,%
\[
\left\Vert u\right\Vert _{W_{X}^{2,p}\left(  S_{T}\right)  }\leq c\left\{
\Vert\mathcal{L}u\Vert_{L^{p}\left(  S_{T}\right)  }+\Vert u\Vert
_{L^{p}\left(  S_{T}\right)  }\right\}
\]
{for every }$u\in W_{X}^{2,p}\left(  S_{T}\right)  $. Note that the constant
$c$ does not depend on $T$.
\end{theorem}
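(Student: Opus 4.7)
The plan is to derive the strip estimate from the global estimate on $\mathbb{R}^{N+1}$ (Theorem \ref{Thm main a priori estimates}) by a cutoff argument in the time variable. The case $T=+\infty$ coincides with Theorem \ref{Thm main a priori estimates}, so I would fix a finite $T$ throughout. By the same local-approximation result used in the proof of Theorem \ref{Thm main a priori estimates}, it is enough to establish the estimate for sufficiently smooth $u$.

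Given $u\in W_X^{2,p}(S_T)$, I would introduce a family of smooth time-cutoffs $\eta_\delta\in C^\infty(\mathbb{R},[0,1])$ (one for each small $\delta>0$) with $\eta_\delta\equiv 1$ on $(-\infty,T-2\delta]$, $\eta_\delta\equiv 0$ on $[T-\delta,+\infty)$, and $|\eta_\delta'|\leq C/\delta$. The product $v_\delta:=u\eta_\delta$, extended by zero past $t=T-\delta$, belongs to $W_X^{2,p}(\mathbb{R}^{N+1})$, so applying Theorem \ref{Thm main a priori estimates} to $v_\delta$ gives
\[
\|v_\delta\|_{W_X^{2,p}(\mathbb{R}^{N+1})}\leq c\,\bigl\{\|\mathcal{L}v_\delta\|_{L^p(\mathbb{R}^{N+1})}+\|v_\delta\|_{L^p(\mathbb{R}^{N+1})}\bigr\}.
\]
Since $\eta_\delta$ depends only on $t$ and $Y\eta_\delta=-\eta_\delta'$, one has $\mathcal{L}(u\eta_\delta)=\eta_\delta\mathcal{L}u-u\eta_\delta'$, and because $\eta_\delta\equiv 1$ on $S_{T-2\delta}$, the LHS dominates $\|u\|_{W_X^{2,p}(S_{T-2\delta})}$. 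Using $\|v_\delta\|_{L^p}\leq\|u\|_{L^p(S_T)}$ and $\|\eta_\delta\mathcal{L}u\|_{L^p}\leq\|\mathcal{L}u\|_{L^p(S_T)}$, this produces
\[
\|u\|_{W_X^{2,p}(S_{T-2\delta})}\leq c\,\bigl\{\|\mathcal{L}u\|_{L^p(S_T)}+\|u\|_{L^p(S_T)}+\|u\eta_\delta'\|_{L^p(\mathbb{R}^{N+1})}\bigr\}. \qquad(\star)
\]

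The main obstacle is to control the commutator term $\|u\eta_\delta'\|_{L^p}$ uniformly in $\delta$: the naïve bound $C\delta^{-1}\|u\|_{L^p((T-2\delta,T-\delta)\times\mathbb{R}^N)}$ is not uniform in $\delta$ for a general $u\in L^p$. The expected way out is the algebraic identity
\[
u\eta_\delta'=\eta_\delta Y u-Y(u\eta_\delta),
\]
valid since $\eta_\delta$ depends only on $t$. This replaces the commutator by $\|Yu\|_{L^p(S_T)}$ (which is part of the target norm on the LHS, and hence harmless) and $\|Yv_\delta\|_{L^p(\mathbb{R}^{N+1})}$ (which is a summand of $\|v_\delta\|_{W_X^{2,p}(\mathbb{R}^{N+1})}$ and must be absorbed). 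Closing this loop is the delicate point: it requires either a bootstrap/iteration using the precise structure of Theorem \ref{Thm main a priori estimates}, a Steklov-type regularisation in $t$ that produces an integration-by-parts estimate trading $|\eta_\delta'|^p$ against one power of $|\eta_\delta'|$ together with $\|\partial_t u\|$ taken against a spatial test function, or a more refined family of cutoffs adapted to the function $u$ itself.

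Once a bound of the form $(\star)$ with RHS independent of $\delta$ is available, I would let $\delta\to 0^+$: by monotone convergence applied separately to each $L^p$-summand in the $W_X^{2,p}$-norm, $\|u\|_{W_X^{2,p}(S_{T-2\delta})}\uparrow\|u\|_{W_X^{2,p}(S_T)}$, and the right-hand side of $(\star)$ is bounded by $c\{\|\mathcal{L}u\|_{L^p(S_T)}+\|u\|_{L^p(S_T)}\}$ with the same constant $c$ as in Theorem \ref{Thm main a priori estimates}, independently of $T$. This delivers the claimed uniform estimate.
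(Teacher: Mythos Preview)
Your cutoff-in-time approach has a genuine gap at the point you yourself flag as ``the delicate point,'' and none of the suggested workarounds closes it. The identity $u\eta_\delta'=\eta_\delta Yu-Y(u\eta_\delta)$ produces on the right-hand side both $\Vert Yu\Vert_{L^p(S_T)}$ and $\Vert Yv_\delta\Vert_{L^p(\mathbb{R}^{N+1})}$. The first is \emph{not} harmless: it is a summand of the \emph{left}-hand side you are trying to bound, and the constant $c$ coming from Theorem~\ref{Thm main a priori estimates} is not small, so you cannot absorb it. Rewriting $Yu=\mathcal{L}u-\sum a_{ij}\partial^2_{x_ix_j}u$ only moves the difficulty to the second-order derivatives, which is equally circular. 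A Steklov-in-$t$ argument would require control on $\partial_t u$, but from $Yu\in L^p$ you only get $\partial_t u=\sum_{j,k}b_{jk}x_k\partial_{x_j}u-Yu$, and this involves derivatives $\partial_{x_j}u$ for \emph{all} $j=1,\dots,N$, which are not part of the $W_X^{2,p}$ norm when $q<N$. In short, a pure time-cutoff cannot produce a $T$-independent estimate here, because the drift mixes time with the degenerate space directions.

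The paper proceeds quite differently: rather than deducing the strip estimate from the $\mathbb{R}^{N+1}$ case, it reruns the whole proof chain (Theorems~\ref{Thm Krylov main step}, \ref{Thm 2}, \ref{Thm local small Lp}, \ref{Thm stima globale ver1}) directly on $S_T$. Two structural observations make this work with $T$-independent constants. First, Proposition~\ref{prop:balltagliate} shows that the truncated balls $B_r^T(\xi)=B_r(\xi)\cap S_T$ satisfy $|B_r^T(\xi)|\geq\tfrac12|B_r(\xi)|$, so $(S_T,d,|\cdot|)$ is a space of homogeneous type with doubling constant independent of $T$; hence the Hardy--Littlewood and Fefferman--Stein maximal inequalities hold on $S_T$ uniformly. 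Second, the singular-integral kernel $\partial_{x_ix_j}^2\Gamma(\xi;\eta)$ is supported where $t>s$, so for $\xi\in S_T$ one has $T_{ij}(f)(\xi)=T_{ij}(f\cdot\mathbf{1}_{S_T})(\xi)$; this ``forward-in-time'' feature is what localises the operator to the strip without any cutoff and without commutator errors. Your argument never exploits either of these facts, and that is why it stalls.
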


The proof of Theorem \ref{Thm global strip} follows by carefully revising the
one of Theorem \ref{Thm main a priori estimates}, and by exploiting the
following proposition.

\begin{proposition}
\label{prop:balltagliate}There exists $c>0$ such that if, for every $\xi\in
S_{T}$ and $r>0$, we let%
\[
B_{r}^{T}(\xi_{0})=B_{r}(\xi_{0})\cap S_{T},
\]
then;
\begin{align*}
\mathrm{i)}  &  \,\,\left\vert B_{2r}^{T}\left(  \xi\right)  \right\vert \leq
c\left\vert B_{r}^{T}(\xi)\right\vert \\
\mathrm{ii)}  &  \,\,\left\vert B_{kr}^{T}(\xi)\right\vert \leq ck^{Q+2}%
\left\vert B_{r}^{T}(\xi)\right\vert \text{ for every }k\geq1\text{.}%
\end{align*}

\end{proposition}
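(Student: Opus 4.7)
The plan is to reduce the problem to a one-parameter estimate for the unit ball, by exploiting the group law, the anisotropic dilation, and the symmetry of the homogeneous norm $\rho$ in the time variable.

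Fix $\xi=(x_0,t_0)\in S_T$ and set $T_*:=T-t_0>0$. Since $B_r(\xi)=\xi\circ B_r(0)$ by \eqref{eq:balltraslD}, and since the left-translation $\eta\mapsto\xi\circ\eta=(x_0+E(t_0)y,\,t_0+s)$ is measure-preserving (Lebesgue measure is the Haar measure on $\mathbb{G}$) and shifts the time coordinate by $t_0$, I obtain
\[
|B_r^T(\xi)|=|B_r(0)\cap\{s<T_*\}|.
\]
Next, using $B_r(0)=D(r)(B_1(0))$ together with the facts that $D(r)$ has Jacobian $r^{Q+2}$ and pulls the half-space $\{s<T_*\}$ back to $\{s<T_*/r^2\}$, I get
\[
|B_r^T(\xi)|=r^{Q+2}\,\phi(T_*/r^2),\qquad\text{where }\phi(\tau):=|B_1(0)\cap\{s<\tau\}|.
\]

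The function $\phi$ is non-decreasing on $\mathbb{R}$ with values in $[0,\omega]$. The key observation is that the explicit description
\[
B_1(0)=\{(x,t)\in\mathbb{R}^{N+1}:\|x\|+\sqrt{|t|}<1\},
\]
which follows from $d(\cdot,0)=\rho$ and \eqref{eq:defrhonorm}, depends on $t$ only through $|t|$, and is therefore symmetric under $t\mapsto -t$; hence $\phi(0)=\omega/2$, and so $\phi(\tau)\geq\omega/2$ for every $\tau\geq 0$. Since $\xi\in S_T$ gives $T_*>0$, for every $k\geq 1$ and every $r>0$ we have $T_*/(kr)^2>0$ as well, and consequently
\[
|B_{kr}^T(\xi)|=(kr)^{Q+2}\,\phi\bigl(T_*/(kr)^2\bigr)\leq(kr)^{Q+2}\,\omega\leq 2k^{Q+2}r^{Q+2}\,\phi(T_*/r^2)=2k^{Q+2}|B_r^T(\xi)|,
\]
which is (ii), and (i) follows by taking $k=2$.

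The only point that requires attention is the bookkeeping for how the horizontal cutoff $\{s<T\}$ transforms under the group translation and the anisotropic dilation: the former shifts it to $\{s<T_*\}$, the latter rescales it to $\{s<T_*/r^2\}$, and Lebesgue measure is compatible with both operations. The assumption $\xi\in S_T$ is used precisely to ensure $T_*>0$, which is what activates the lower bound $\phi(\tau)\geq\omega/2$ coming from the time-symmetry of $B_1(0)$; this is the only place where the \emph{one-sided} nature of the strip $S_T$ enters in an essential way.
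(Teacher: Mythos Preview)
Your proof is correct and follows essentially the same approach as the paper: both arguments establish the key inequality $|B_r^T(\xi)|\geq\tfrac{1}{2}|B_r(\xi)|$ (in your notation, $\phi(\tau)\geq\omega/2$ for $\tau>0$) via the time-symmetry of $B_1(0)$, and then deduce (i) and (ii) from the known measure identities for the full balls. One minor slip that does not affect the argument: by \eqref{traslazioni} the space coordinate of $\xi\circ\eta$ is $y+E(s)x_0$, not $x_0+E(t_0)y$; only the time shift $s\mapsto s+t_0$ is used, so your conclusion stands.
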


\begin{proof}
We will show that, for every $\xi\in S_{T}$ and $r>0$,%
\begin{equation}
\left\vert B_{r}^{T}\left(  \xi\right)  \right\vert \geq\frac{1}{2}\left\vert
B_{r}\left(  \xi\right)  \right\vert . \label{sfere tagliate}%
\end{equation}
This implies
\[
\left\vert B_{2r}^{T}\left(  \xi\right)  \right\vert \leq\left\vert
B_{2r}\left(  \xi\right)  \right\vert \leq c\left\vert B_{r}\left(
\xi\right)  \right\vert \leq2c\left\vert B_{r}^{T}\left(  \xi\right)
\right\vert
\]
and also%
\[
\left\vert B_{kr}^{T}\left(  \xi\right)  \right\vert \leq\left\vert
B_{kr}\left(  \xi\right)  \right\vert \leq ck^{Q+2}\left\vert B_{r}\left(
\xi\right)  \right\vert \leq2ck^{Q+2}\left\vert B_{r}^{T}\left(  \xi\right)
\right\vert .
\]
To prove (\ref{sfere tagliate}), let us write, for $\xi=\left(  x,t\right)
,\eta=\left(  y,s\right)  $%
\[
\left\vert B_{r}^{T}\left(  \xi\right)  \right\vert =\int_{\rho\left(
\eta^{-1}\circ\xi\right)  <r,s<T}d\xi
\]
letting $\eta^{-1}\circ\xi=\zeta=\left(  z,\sigma\right)  ,$ with $\sigma=t-s$%
\[
=\int_{\rho\left(  \zeta\right)  <r,\sigma>t-T}d\zeta
\]
since $\xi\in S_{T}$, we have $t-T<0$, hence%
\begin{align*}
&  \geq\int_{\rho\left(  \zeta\right)  <r,\sigma>0}d\zeta=\int_{\left\Vert
z\right\Vert +\sqrt{\left\vert \sigma\right\vert }<r,\sigma>0}dzd\sigma\\
&  =\frac{1}{2}\int_{\left\Vert z\right\Vert +\sqrt{\left\vert \sigma
\right\vert }<r}dzd\sigma=\frac{1}{2}\left\vert B_{r}\left(  \xi\right)
\right\vert .
\end{align*}
This ends the proof.
\end{proof}

With Proposition \ref{prop:balltagliate} at hand, we can provide the

\begin{proof}
[Proof of Theorem \ref{Thm global strip}]First of all we observe that, as we
read from Proposition \ref{prop:balltagliate}, $\left(  S_{T},d,\left\vert
\cdot\right\vert \right)  $ is a space of homogeneous type; in particular, the
Hardy-Littlewood maximal operator $\mathcal{M}^{T}$ (defined on $S_{T}$
w.r.t.\thinspace the balls $B_{r}^{T}$) maps the space $L^{p}\left(
S_{T}\right)  $ into itself, and the same is true for the analogous sharp
maximal operator (since $S_{T}$ is still unbounded). Moreover, if $T_{ij}$ is
the singular\thinspace-\thinspace in\-tegral operator defined w.r.t.\thinspace
the differential operator $\overline{\mathcal{L}}$ (see \eqref{T}), we have:%
\[
T_{ij}\left(  f\right)  =T_{ij}\left(  f\cdot\mathbf{1}_{S_{T}}\right)  \text{
in }S_{T}\text{.}%
\]
As a consequence, we have
\[
\left\Vert T_{ij}\left(  f\right)  \right\Vert _{L^{p}\left(  S_{T}\right)
}\leq c\left\Vert f\right\Vert _{L^{p}\left(  S_{T}\right)  }\text{ for every
}f\in L^{p}\left(  S_{T}\right)  \text{ and }p\in\left(  1,\infty\right)
\text{.}%
\]
Gathering all these facts, and revising carefully the proofs of the results we
have established on the entire space $\mathbb{R}^{N+1}$, we can then infer the
following facts. \medskip

a)\,\,Theorem \ref{Thm Krylov main step} holds with
\[
B_{r}\mapsto B_{r}^{T};\qquad\mathcal{M\mapsto M}^{T}.
\]

b)\thinspace\thinspace In the proof of Theorem \ref{Thm 2}, we can replace
\[
B_{r}\mapsto B_{r}^{T};\qquad\mathcal{M\mapsto M}^{T}.
\]
Thus, we have
\begin{align*}
&  \int_{B_{kr}^{T}(\overline{\xi})\cap B_{R}^{T}(\xi^{\ast})}\left\vert
\gamma_{hl}\left(  t\right)  -a_{hl}\left(  x,t\right)  \right\vert dxdt\\
&  \qquad\leq\int_{B_{kr}(\overline{\xi})\cap B_{R}(\xi^{\ast})}\left\vert
\gamma_{hl}\left(  t\right)  -a_{hl}\left(  x,t\right)  \right\vert dxdt\\
&  \qquad(\text{reasoning as in the proof of Theorem \ref{Thm 2}})\\
&  \qquad\leq c\left\vert B_{kr}(\overline{\xi})\right\vert a^{\#}\left(
R\right)  \leq2c\left\vert B_{kr}^{T}(\overline{\xi})\right\vert a^{\#}\left(
R\right)  .
\end{align*}
and the proof can be concluded at the same way. \vspace*{0.1cm}

c)\,\,On account of a),\-\,b), Theorem \ref{Thm local small Lp} can be proved
with $B_{R}\mapsto B_{R}^{T}$, by arguing exactly in the same way and
exploiting the continuity of the maximal Hardy Littlewood and sharp maximal
operators over $S_{T}$. \vspace*{0.1cm}

\noindent In view of these facts, and since the interpolation inequalities
\eqref{eq:interpIneq} hold for every fixed time (hence, they also hold on the
strip $S_{T}$), we can conclude the proof of Theorem \ref{Thm global strip} by
proceeding as in the proof of Theorem \ref{Thm main a priori estimates}.
\end{proof}

Now that we have established Theorem \ref{Thm global strip}, we proceed by
proving the second impro\-vement of our estimates announced at the beginning
of the section.

\begin{theorem}
\label{thm:ImprovedLambda} Let $\mathcal{L}$ be an operator as in \eqref{L},
assume that \emph{\textbf{(H1)}, \textbf{(H2)}, \textbf{(H3)}} hold, and let
$p\in\left(  1,\infty\right)  $. Then, there exist positive constants
$c,\lambda_{0}$ \emph{(}depending on $p$, the matrix $B$ in \eqref{B}, the
number $\nu$ in \eqref{nu}, and the function $a^{\#}$ in
\eqref{mod VMO coeff}\emph{)} such that%
\begin{equation}
\left\Vert u\right\Vert _{W_{X}^{2,p}\left(  S_{T}\right)  }\leq
c\Vert\mathcal{L}u-\lambda u\Vert_{L^{p}\left(  S_{T}\right)  }
\label{eq:LpOmogenee}%
\end{equation}
for every $T\in(-\infty,+\infty]$, $u\in W_{X}^{2,p}(S_{T})$ and $\lambda
\geq\lambda_{0}$. Note that the constants $c,\lambda_{0}$ do not depend on $T$.
\end{theorem}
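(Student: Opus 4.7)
The plan is to apply Theorem \ref{Thm global strip} in an enlarged setting, obtained via an \emph{Agmon-type dimension extension}. One adds an auxiliary spatial variable $y \in \mathbb{R}$ as an additional elliptic direction at the top level of the drift hierarchy, thereby replacing $(q,N,m_0)$ by $(q+1, N+1, q+1)$ and $B_1$ by $(B_1, \mathbf{0})$ (all other blocks $B_2,\ldots,B_k$ unchanged). The rank conditions of \textbf{(H2)} and the hierarchy $\tilde m_0 = q+1 \geq m_1 \geq \ldots$ are preserved, so the enlarged drift matrix $\tilde B$ is structurally admissible. One then considers on $\mathbb{R}^{(N+1)+1}$ the enlarged operator
\[
\tilde{\mathcal{L}} v \;=\; \mathcal{L} v + \partial_{yy}^{2} v,
\]
with extended coefficients $\tilde a_{ij}(x,y,t) := a_{ij}(x,t)$ for $i,j\leq q$ and $\tilde a_{q+1,q+1} \equiv 1$ (off-diagonal zero). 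These fulfil \textbf{(H1)} with the same ellipticity constant $\nu$ and are $VMO_{x}$ in the enlarged space-of-homogeneous-type sense, with modulus controlled by $a_{\cdot}^{\#}$ (they are independent of $y$). Hence Theorem \ref{Thm global strip} applies to $\tilde{\mathcal{L}}$ on the enlarged strip $\tilde S_T := \mathbb{R}^{N+1}\times(-\infty,T)$ with a constant $\tilde c$ that depends only on $p,B,\nu,a_{\cdot}^{\#}$ and, crucially, is independent of $\lambda$.

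The core of the argument is to plug into the enlarged estimate the tensor product
\[
v(x,y,t) := u(x,t)\,\zeta_R(y), \qquad \zeta_R(y) := \chi(y/R)\,\cos(\sqrt{\lambda}\,y),
\]
for a fixed $\chi \in C_c^{\infty}(\mathbb{R})$ and $R$ large (depending on $\lambda$). Since $\zeta_R$ depends only on $y$ and the enlarged drift leaves $y$ untouched, one computes
\[
\tilde{\mathcal{L}} v \;=\; (\mathcal{L}u - \lambda u)\,\zeta_R \,+\, u\,\bigl(\zeta_R'' + \lambda \zeta_R\bigr),
\]
where $\zeta_R''+\lambda \zeta_R = R^{-2}\chi''(y/R)\cos(\sqrt{\lambda}\, y) - 2\sqrt{\lambda}\, R^{-1}\chi'(y/R)\sin(\sqrt{\lambda}\, y)$ has $L^p(\mathbb{R})$-norm $O\bigl(R^{1/p-2}+\sqrt{\lambda}\,R^{1/p-1}\bigr)$. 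For $R\sqrt{\lambda}$ large, an elementary oscillation argument gives $\|\zeta_R\|_{L^p(\mathbb{R})} \asymp R^{1/p}$ and $\|\zeta_R''\|_{L^p(\mathbb{R})} \asymp \lambda R^{1/p}$, so the $\partial_{yy}^{2}$ term on the left of the enlarged estimate produces $\|\partial_{yy}^{2} v\|_{L^p(\tilde S_T)} \asymp \lambda R^{1/p}\|u\|_{L^p(S_T)}$. Dividing the enlarged estimate through by $R^{1/p}$ and letting $R\to+\infty$ kills all error terms, leaving
\[
\lambda \|u\|_{L^p(S_T)} + \|u\|_{W_X^{2,p}(S_T)} \;\leq\; c\Bigl(\|\mathcal{L}u - \lambda u\|_{L^p(S_T)} + \|u\|_{L^p(S_T)}\Bigr).
\]
For $\lambda \geq \lambda_0 := 2c$, the residual $c\|u\|_{L^p}$ is absorbed into $(\lambda/2)\|u\|_{L^p}$ on the left, yielding the claim.

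The main delicate point will be the verification that the enlarged setting truly falls within the scope of Theorem \ref{Thm global strip}: specifically, that $\mathbb{R}^{N+2}$ endowed with the natural extended homogeneous quasidistance $\tilde d$ (in which $y$ has weight $1$) is a space of homogeneous type, and that the extended coefficients $\tilde a_{ij}$ are $VMO_{x}$ in this enlarged sense with an enlarged modulus $\tilde a_{\cdot}^{\#}$ quantitatively controlled by $a_{\cdot}^{\#}$. The latter should reduce, via Fubini, to a comparison between integral averages on $\tilde d$-balls of $\mathbb{R}^{N+2}$ and on $d$-balls of $\mathbb{R}^{N+1}$, exploiting the product structure of $\tilde d$ in $y$. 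A secondary technical point is the uniform lower bound $\|\zeta_R\|_{L^p}\gtrsim R^{1/p}$, which only holds for $R\sqrt{\lambda}$ sufficiently large and is the origin of the threshold $\lambda_0$ in the statement; all other ingredients, including the fact that $v$ directly lies in the enlarged $W^{2,p}$-space without any density argument and that $\tilde c$ is $\lambda$-independent, are automatic from the construction.
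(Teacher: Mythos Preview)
Your proposal is correct and follows essentially the same Agmon-type dimension-extension argument as the paper: both add an extra elliptic variable $y$, form $\tilde{\mathcal{L}}=\mathcal{L}+\partial_{yy}^2$, verify \textbf{(H1)}--\textbf{(H3)} for $\tilde{\mathcal{L}}$, apply the global $W_X^{2,p}$ estimate to a tensor product of $u$ with an oscillating function of $y$, and absorb the residual $\|u\|_{L^p}$ term for $\lambda$ large. The only cosmetic difference is that the paper uses a \emph{fixed} cutoff $\phi\in C_0^\infty(-1,1)$ with the complex exponential $e^{i\sqrt{\lambda}y}$ (producing an error $c(1+\sqrt{\lambda})\|u\|_{L^p}$, absorbed when $c(1+\sqrt{\lambda})\le\lambda/2$), whereas you use the real function $\chi(y/R)\cos(\sqrt{\lambda}y)$ and pass to the limit $R\to\infty$; both variants are standard and equally valid. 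One small remark: your threshold $\lambda_0=2c$ comes from the absorption step, not from the oscillation lower bound on $\|\zeta_R\|_{L^p}$ (which holds for any fixed $\lambda>0$ once $R$ is large enough), so your closing sentence slightly misidentifies its origin.
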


It should be noticed that the appearence of the $L^{p}$\thinspace-\thinspace
norm of $\mathcal{L}_{\lambda}u$ (instead of that of $\mathcal{L}$) in the
right-hand side of \eqref{eq:LpOmogenee} is somehow unavoidable: indeed, in
the special case when $\mathcal{L}$ has \emph{constant coefficients} and
$T=+\infty$, a simple homogeneity argument shows that an estimate of the form
\eqref{eq:LpOmogenee} \emph{cannot hold} for $\mathcal{L}$.

\begin{proof}
To ease the readability (and to simplify the notation), we only consider the
case $T=+\infty$ (the case $T<+\infty$ being completely analogous). To begin
with, let $\widetilde{\mathcal{L}}$ be the KFP operator defined on
$\mathbb{R}^{N+2}=\mathbb{R}_{y}\times\mathbb{R}_{(x,t)}^{N+1}$ as
\[
\widetilde{\mathcal{L}}=\mathcal{L+\partial}_{yy}^{2}=\partial_{yy}^{2}%
+\sum_{i,j=1}^{q}a_{ij}(x,t)\partial_{x_{i}x_{j}}^{2}+Y.
\]
Clearly, this operator $\widetilde{\mathcal{L}}$ satisfies assumptions
\textbf{(H1)}\thinspace-\thinspace\textbf{(H3)}, with
\[
\widetilde{X}=\{\partial_{y},\partial_{x_{1}},\ldots,\partial_{x_{n}%
},Y\},\qquad\widetilde{A}_{0}(x,t)=%
\begin{pmatrix}
1 & \mathbb{O}\\
\mathbb{O} & A_{0}(x,t)
\end{pmatrix}
;
\]
hence, we can apply Theorem \ref{Thm main a priori estimates} to
$\widetilde{\mathcal{L}}$, obtaining the estimate
\begin{equation}
\Vert u\Vert_{W_{\widetilde{X}}^{2,p}(\mathbb{R}^{N+2})}\leq c\big\{\Vert
\widetilde{\mathcal{L}}u\Vert_{L^{p}(\mathbb{R}^{N+2})}+\Vert u\Vert
_{L^{p}(\mathbb{R}^{N+2})}\big\}, \label{global u}%
\end{equation}
which holds for every function $u=u(y,x,t)\in W_{\widetilde{X}}^{2,p}%
(\mathbb{R}^{N+2})$.

Let now $\phi\left(  y\right)  \in C_{0}^{\infty}\left(  -1,1\right)
,\,\phi\not \equiv 0$, and let $u\left(  x,t\right)  \in W_{X}^{2,p}\left(
\mathbb{R}^{N+1}\right)  $. We set
\[
\widetilde{u}\left(  y,x,t\right)  =u\left(  x,t\right)  \phi\left(  y\right)
e^{i\sqrt{\lambda}y}\in W_{X}^{2,p}\left(  \mathbb{R}^{N+2}\right)  .
\]
We can apply the bound (\ref{global u}) to $\widetilde{u}$ and
$\widetilde{\mathcal{L}}$ on $\mathbb{R}^{N+2}$ (since $\widetilde{\mathcal{L}%
}$ is linear with real coefficients, the bound extends to complex valued
functions):%
\begin{equation}
\left\Vert \widetilde{u}\right\Vert _{W_{\widetilde{X}}^{2,p}\left(
\mathbb{R}^{N+2}\right)  }\leq c\left\{  \Vert\widetilde{\mathcal{L}%
}\widetilde{u}\Vert_{L^{p}\left(  \mathbb{R}^{N+2}\right)  }+\Vert
\widetilde{u}\Vert_{L^{p}\left(  \mathbb{R}^{N+2}\right)  }\right\}  .
\label{1}%
\end{equation}
Now, by definition of $\widetilde{u}$,%
\[
\left\Vert \widetilde{u}\right\Vert _{W_{{\widetilde{X}}}^{2,p}\left(
\mathbb{R}^{N+2}\right)  }\geq c\left\Vert u\right\Vert _{W_{X}^{2,p}\left(
\mathbb{R}^{N+1}\right)  }+c\left\Vert u\right\Vert _{L^{p}\left(
\mathbb{R}^{N+1}\right)  }\left\Vert \phi^{\prime\prime}+2i\sqrt{\lambda}%
\phi^{\prime}-\lambda\phi\right\Vert _{L^{p}\left(  -1,1\right)  }%
\]
and%
\[
\left\Vert \phi^{\prime\prime}+2i\sqrt{\lambda}\phi^{\prime}-\lambda
\phi\right\Vert _{L^{p}\left(  -1,1\right)  }\geq\left\Vert \phi^{\prime
\prime}-\lambda\phi\right\Vert _{L^{p}\left(  -1,1\right)  }\geq c\lambda
\]
for $\lambda$ large enough depending on $\phi$, and $c>0$ depending on $\phi$.
Hence%
\begin{equation}
\left\Vert \widetilde{u}\right\Vert _{W_{X}^{2,p}\left(  \mathbb{R}%
^{N+2}\right)  }\geq c\left\{  \left\Vert u\right\Vert _{W_{X}^{2,p}\left(
\mathbb{R}^{N+1}\right)  }+\lambda\left\Vert u\right\Vert _{L^{p}\left(
\mathbb{R}^{N+1}\right)  }\right\}  . \label{2}%
\end{equation}

Also,%
\[
\Vert\widetilde{u}\Vert_{L^{p}\left(  \mathbb{R}^{N+2}\right)  }\leq c\Vert
u\Vert_{L^{p}\left(  \mathbb{R}^{N+1}\right)  }%
\]
and%
\begin{align*}
\widetilde{\mathcal{L}}\widetilde{u}\left(  y,x,t\right)   &  =\mathcal{L}%
u\left(  x,t\right)  \cdot\phi\left(  y\right)  e^{i\sqrt{\lambda}y}+u\left(
x,t\right)  \partial_{yy}^{2}\left(  \phi\left(  y\right)  e^{i\sqrt{\lambda
}y}\right) \\
&  =e^{i\sqrt{\lambda}y}\left\{  \left(  \mathcal{L}u-\lambda u\right)
\left(  x,t\right)  \cdot\phi\left(  y\right)  +u\left(  x,t\right)  \left(
\phi^{\prime\prime}\left(  y\right)  +2i\sqrt{\lambda}\phi^{\prime}\left(
y\right)  \right)  \right\}
\end{align*}
so that%
\begin{equation}
\Vert\widetilde{\mathcal{L}}\widetilde{u}\Vert_{L^{p}\left(  \mathbb{R}%
^{N+2}\right)  }\leq c\left\{  \Vert\mathcal{L}u-\lambda u\Vert_{L^{p}\left(
\mathbb{R}^{N+1}\right)  }+\left(  1+\sqrt{\lambda}\right)  \Vert
u\Vert_{L^{p}\left(  \mathbb{R}^{N+1}\right)  }\right\}  . \label{3}%
\end{equation}
Then (\ref{1})-(\ref{2})-(\ref{3}) imply:%
\begin{equation}%
\begin{split}
&  \left\Vert u\right\Vert _{W_{X}^{2,p}\left(  \mathbb{R}^{N+1}\right)
}+\lambda\left\Vert u\right\Vert _{L^{p}\left(  \mathbb{R}^{N+1}\right)  }\\
&  \qquad\leq c\left\{  \Vert\mathcal{L}u-\lambda u\Vert_{L^{p}\left(
\mathbb{R}^{N+1}\right)  }+\left(  1+\sqrt{\lambda}\right)  \Vert
u\Vert_{L^{p}\left(  \mathbb{R}^{N+1}\right)  }\right\}  .\label{4}%
\end{split}
\end{equation}
Now, there exists $\lambda_{0}>0$ such that
\[
c\left(  1+\sqrt{\lambda}\right)  \leq\frac{\lambda}{2}\text{ for every
}\lambda\geq\lambda_{0}.
\]
For these $\lambda$ and $\lambda_{0}$, and the same $c$ as in (\ref{4}), we
get%
\[
\left\Vert u\right\Vert _{W_{X}^{2,p}\left(  \mathbb{R}^{N+1}\right)  }\leq
c\Vert\mathcal{L}u-\lambda u\Vert_{L^{p}\left(  \mathbb{R}^{N+1}\right)  },
\]
so we are done.
\end{proof}

\subsection{Solvability of $\mathcal{L}u-\lambda u=f$}

We now turn to study the solvability of the equation%
\begin{equation}
\mathcal{L}u-\lambda u=f\quad\text{in $S_{T}$}, \label{eq:Lulambdaf}%
\end{equation}
where $-\infty<T\leq+\infty,\,f\in L^{p}(S_{T})$ and $\lambda\geq\lambda_{0}$
(with $\lambda_{0}>0$ as in Theorem \ref{thm:ImprovedLambda}). As anticipated,
our approach relies on the method of continuity, which allows us to relate the
solvability of \eqref{eq:Lulambdaf} with the solvability of
\begin{equation}
\mathcal{K}u-\lambda u=f\quad\text{in $S_{T}$}, \label{eq:Kulambdaf}%
\end{equation}
where $\mathcal{K}$ is the \emph{constant-coefficient} Kolmogorov operator,
that is,
\begin{equation}
\mathcal{K}=\Delta_{\mathbb{R}^{q}}+Y. \label{eq:opK}%
\end{equation}
Thus, we begin by investigating the solvability of \eqref{eq:Kulambdaf}. To
this end, following the approach by Krylov \cite[Chap. 2]{KrylovBook}, we
first prove the subsequent results.

\begin{proposition}
[Liouville-type property]\label{thm:SMPLiouville}Let $\lambda>0$ be fixed, and
let $\mathcal{K}_{\lambda}=\mathcal{K}-\lambda$ \emph{(}with $\mathcal{K}$ as
in \eqref{eq:opK}\emph{)}. Moreover, let $\mathcal{K}_{\lambda}^{\ast}$ be the
formal adjoint of $\mathcal{K}_{\lambda}$, that is,
\begin{equation}
\label{eq:KlambdastarExplicit}\mathcal{K}_{\lambda}^{\ast}=\Delta
_{\mathbb{R}^{q}}-Y-\lambda.
\end{equation}
If $f\in C^{\infty}(\mathbb{R}^{N+1})\cap L^{\infty}(\mathbb{R}^{N+1})$ is
such that
\[
\mathcal{K}\text{$_{\lambda}^{\ast}f=0$ in $\mathbb{R}^{N+1}$},
\]
then we necessarily have $f\equiv0$ in $\mathbb{R}^{N+1}$.
\end{proposition}

The previous proposition is stated for the adjoint operator $\mathcal{K}%
_{\lambda}^{\ast}$ just because we will need it in this form; the same
property holds for $\mathcal{K}_{\lambda}$ as well. \medskip

In order to prove Proposition \ref{thm:SMPLiouville} we need the following lemma.

\begin{lemma}
\label{lem:funzionemodificata}There exists a function $F:\mathbb{R\rightarrow
R}$ such that $F\in C^{\infty}(\mathbb{R})$, $F\left(  z\right)  =1$ for
$z\in\left[  -1,1\right]  $ and, for some constant $c>0,$%
\[
\left\vert \frac{F^{\prime\prime}(z)}{F(z)}\right\vert \leq c\quad
\text{and}\quad\left\vert \frac{F^{\prime}(z)}{F(z)}\right\vert \leq
c\left\vert z\right\vert \qquad\text{for every $z\in\mathbb{R}$}.
\]

\end{lemma}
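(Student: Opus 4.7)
The plan is to construct $F$ explicitly in the form $F(z)=e^{\phi(z)}$ for a suitably chosen smooth, even function $\phi\geq 0$. With this ansatz $F$ is automatically $C^\infty$ and strictly positive, and direct computation gives
$F'/F=\phi'$ and $F''/F=\phi''+(\phi')^{2}$.
It therefore suffices to choose $\phi$ so that $\phi\equiv 0$ on $[-1,1]$ (which guarantees $F\equiv 1$ there), $\phi'$ is globally bounded, and $\phi''+(\phi')^{2}$ is globally bounded.

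The natural choice is to take $\phi\in C^\infty(\mathbb{R})$ even, with $\phi\equiv 0$ on $[-1,1]$ and $\phi(z)=|z|$ on $\{|z|\geq 2\}$ (the latter prescription is smooth on that set, since $|z|$ coincides with $z$ or $-z$ there), joined by a smooth, even interpolation on the two compact intervals $[-2,-1]$ and $[1,2]$. Such a $\phi$ is produced by a standard cutoff/partition-of-unity construction, and one can moreover arrange $\phi$ to be nondecreasing on $[0,\infty)$, so that $\phi'$ and $\phi''$ are under complete control on the two transition intervals.

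The required estimates then follow region by region. On $[-1,1]$ both quotients vanish identically. On the two compact transition intervals the continuous functions $\phi'$ and $\phi''+(\phi')^{2}$ are bounded by some constant $M$, and since $|z|\geq 1$ there we have $|\phi'(z)|\leq M\leq M|z|$. Finally, on $\{|z|\geq 2\}$ we have $\phi'(z)=\mathrm{sgn}(z)$ and $\phi''(z)=0$, so $|F'/F|=1\leq |z|/2$ and $|F''/F|=1$. Taking $c$ to be the maximum of the resulting bounds yields the two required inequalities simultaneously.

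The only subtlety worth flagging is the balance between the two conditions: one needs $\phi'$ genuinely bounded, not merely of lower order than $z$, otherwise $(\phi')^{2}$ spoils the bound on $F''/F$. This rules out tempting choices such as $F(z)=e^{z^{2}/2}$ (for which $(\phi')^{2}=z^{2}$). Linear growth of $\phi$ at infinity is the sharpest compatible regime, and makes both $|F'/F|\leq c|z|$ and $|F''/F|\leq c$ hold; beyond this observation, the argument is an elementary smooth-gluing construction with no real obstacle.
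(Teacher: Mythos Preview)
Your proof is correct. The construction $F=e^{\phi}$ with $\phi\in C^{\infty}(\mathbb{R})$ even, vanishing on $[-1,1]$ and equal to $|z|$ for $|z|\geq 2$, does the job exactly as you describe; the existence of such a $\phi$ is a routine smooth-gluing exercise, and the region-by-region verification of the two estimates is clean.

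The paper takes a closely related but formally different route: it sets $F(z)=\cosh(z\phi(z))$ where $\phi$ is a \emph{standard} cutoff (i.e.\ $\phi\equiv 0$ on $[-1,1]$, $\phi\equiv 1$ on $\{|z|\geq 2\}$). The two constructions agree in spirit --- for $|z|\geq 2$ the paper's $F$ is $\cosh z\sim\tfrac{1}{2}e^{|z|}$, the same exponential-with-linear-exponent regime you use --- but the bookkeeping differs. In the paper's version one computes
\[
\frac{F'}{F}=\tanh(z\phi)(\phi+z\phi'),\qquad
\frac{F''}{F}=(\phi+z\phi')^{2}+\tanh(z\phi)(2\phi'+z\phi''),
\]
and the estimate $|F'/F|\leq c|z|$ comes from the bound $|\tanh w|\leq|w|$, while your version reduces everything to the simpler identities $F'/F=\phi'$ and $F''/F=\phi''+(\phi')^{2}$. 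Your approach trades a slightly more bespoke $\phi$ (one with prescribed linear behavior at infinity rather than a plain cutoff) for a more transparent derivative computation; the paper's approach uses an off-the-shelf cutoff at the cost of handling the $\tanh$ factor. Both are equally valid and yield the same asymptotics for $F$.
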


\begin{proof}
Let $\phi\in C^{\infty}(\mathbb{R})$ be such that

\begin{itemize}
\item[i)] $0\leq\phi\leq1$ pointwise on $\mathbb{R}$;

\item[ii)] $\phi\equiv0$ on $[-1,1]$ and $\phi\equiv1$ on $\mathbb{R}%
\setminus\lbrack-2,2]$,
\end{itemize}

and let
\[
F(z)=\cosh(z\phi(z)).
\]
Then%
\begin{align*}
F^{\prime}(z)  &  =\sinh(z\phi(z))(\phi(z)+z\phi^{\prime}(z));\\
F^{\prime\prime}(z)  &  =\cosh(z\phi(z))(\phi(z)+z\phi^{\prime2}+\sinh
(z\phi(z))(2\phi^{\prime}(z)+z\phi^{\prime\prime}(z)).
\end{align*}
\vspace*{0.05cm}

From this, using the properties i)\thinspace-\thinspace ii) of $\phi$ and
observing that $\phi^{\prime}(z),\phi^{\prime\prime}(z)\neq0$ only for
$1<|z|<2$, we get
\[
\left\vert \frac{F^{\prime\prime}(z)}{F(z)}\right\vert \leq(\phi
(z)+z\phi^{\prime2}+|\tanh(z\phi(z))|\cdot|2\phi^{\prime}(z)+z\phi
^{\prime\prime}(z)|\leq c.
\]
Moreover, since $\left\vert \tanh(z)\right\vert \leq\left\vert z\right\vert $
for every $z\in\mathbb{R}$, we get
\[
\left\vert \frac{F^{\prime}(z)}{F(z)}\right\vert \leq|\tanh(z\phi
(z))|\cdot|\phi(z)+z\phi^{\prime}(z)|\leq|z|\big(\phi(z)^{2}+\phi
(z)|z\phi^{\prime}(z)|\big)\leq c\left\vert z\right\vert .
\]
This ends the proof.
\end{proof}

With Lemma \ref{lem:funzionemodificata}, we can provide the

\begin{proof}
[Proof of Proposition \ref{thm:SMPLiouville}]We argue by contradiction,
assuming that there exists some $\bar{\xi}\in\mathbb{R}^{N+1}$ such that
$f(\bar{\xi})\neq0$. Following \cite{KrylovBook}, we then fix $\varepsilon
\in(0,1)$ (to be chosen conveniently small later on), and we introduce the
auxiliary function
\[
\zeta(x,t)=F(\varepsilon\hat{p}(x))\cosh(\varepsilon t),
\]
where $F(z)$ is as in Lemma \ref{lem:funzionemodificata}, and $\hat{p}$ is a
homogeneous norm which is globally equivalent to $\Vert\cdot\Vert$, and it is
also smooth outside the origin, e.g.
\[
\hat{p}(x)=\left(  \sum_{i=1}^{N}|x_{i}|^{2q_{N}!/q_{i}}\right)
^{1/(2q_{N}!)}.
\]
Accordingly, we define $g=f/\zeta$.

We now observe that, since $\hat{p}$ is smooth outside the origin and since
$F\equiv1$ in $\left[  -1,1\right]  $, we have $\zeta\in C^{\infty}%
(\mathbb{R}^{N+1})$; hence, the same is true of $g$ (notice that $\zeta\geq
1$). Moreover, since $f$ is bounded, we have
\[
\text{$g(\xi)\rightarrow0$ as $\rho(\xi)=\Vert x\Vert+\sqrt{|t|}%
\rightarrow+\infty$}.
\]
In particular, there exists $\xi_{0}\in\mathbb{R}^{N+1}$ such that
\[
g(\xi_{0})=\max_{\mathbb{R}^{N+1}}g>0.
\]
Finally, since $\mathcal{K}_{\lambda}^{\ast}f=0$ on $\mathbb{R}^{N+1}$, we
have%
\begin{equation}%
\begin{split}
&  0=\mathcal{K}_{\lambda}^{\ast}f=\mathcal{K}_{\lambda}^{\ast}(\zeta
g)=\zeta\mathcal{K}^{\ast}(g)+2\sum_{i=1}^{q}\partial_{x_{i}}\zeta
\partial_{x_{i}}g+c\left(  \xi\right)  g,\\[0.1cm]
&  \text{where $c(\xi)=\mathcal{K}_{\lambda}^{\ast}(\zeta)=\Delta
_{\mathbb{R}^{q}}\zeta-\langle Bx,\nabla\zeta\rangle+\partial_{t}\zeta
-\lambda\zeta$}.
\end{split}
\label{eq:tocontradict}%
\end{equation}
To proceed further we claim that, by choosing $\varepsilon>0$ small enough, we
have
\begin{equation}
\text{$c(\xi)<0$ pointwise on $\mathbb{R}^{N+1}$}. \label{eq:Kstarlambdaneg}%
\end{equation}
Indeed, since the vector fields $\partial_{x_{i}}$ (for $1\leq i\leq q$) and
$Z=\langle Bx,\nabla\rangle$ are ho\-mo\-ge\-neous of degree $1$ and $2$,
respectively (notice that $Z=Y+\partial_{t}$, and both $Y$ and $\partial_{t}$
are homogeneous of degree $2$), and since the norm $\hat{p}$ is $D_{0}%
(\lambda)$-homogeneous of degree $1$, by a direct computation we get:%
\begin{align*}
&  \Delta_{\mathbb{R}^{q}}\zeta=\cosh(\varepsilon t)\Delta_{\mathbb{R}^{q}%
}\big[x\mapsto F\big(\hat{p}(D_{0}(\varepsilon)x)\big)\big]\\
&  \qquad\,\,=\cosh(\varepsilon t)\Delta_{\mathbb{R}^{q}}\big((F\circ\,\hat
{p})\circ D_{0}(\varepsilon)\big)(x)\\
&  \qquad\,\,=\cosh(\varepsilon t)\cdot\varepsilon^{2}\Delta_{\mathbb{R}^{q}%
}(F\circ\,\hat{p})(D_{0}(\varepsilon)x)\\
&  \qquad\,\,=\cosh(\varepsilon t)\big(\varepsilon^{2}(F^{\prime\prime}%
\circ\,\hat{p})|\nabla_{\mathbb{R}^{q}}\hat{p}|^{2}+\varepsilon^{2}(F^{\prime
}\circ\,\hat{p})\Delta_{\mathbb{R}^{q}}\hat{p}\big)(D_{0}(\varepsilon)x)\\
&  \qquad\,\,=\zeta\Big(\varepsilon^{2}\frac{F^{\prime\prime}}{F}%
(\varepsilon\hat{p}(x))|\nabla_{\mathbb{R}^{q}}\hat{p}|^{2}(D_{0}%
(\varepsilon)x)+\varepsilon^{2}\frac{F^{\prime}}{F}(\varepsilon\hat
{p}(x))\Delta_{\mathbb{R}^{q}}\hat{p}(D_{0}(\varepsilon)x)\Big).
\end{align*}
Since $|\nabla_{\mathbb{R}^{q}}\hat{p}|$ and $\Delta_{\mathbb{R}^{q}}\hat{p}$
are $D_{0}\left(  \lambda\right)  $-homogeneous of degrees $0$ and $-1$,
respectively, using the estimates in Lemma \ref{lem:funzionemodificata} we get%
\[
\left\vert \Delta_{\mathbb{R}^{q}}\zeta\right\vert \leq\kappa\zeta
\Big(\varepsilon^{2}+\frac{\varepsilon^{3}\hat{p}(x)}{\hat{p}(D_{0}%
(\varepsilon)x)}\Big)=2\kappa\zeta\varepsilon^{2}.
\]
Next, we compute
\begin{align*}
\left\vert Z\zeta\right\vert  &  =\cosh(\varepsilon t)\cdot\big|Z\big[x\mapsto
F\big(\hat{p}(D_{0}(\varepsilon)x)\big)\big]\big|\\
&  \quad\,\,=\cosh(\varepsilon t)\cdot\big|Z\big((F\circ\,\hat{p})\circ
D_{0}(\varepsilon)\big)(x)\big|\\
&  \quad\,\,=\cosh(\varepsilon t)\cdot\varepsilon^{2}\big|Z(F\circ\,\hat
{p})(D_{0}(\varepsilon)x)\big|\\
&  \quad\,\,=\cosh(\varepsilon t)\cdot\varepsilon^{2}\big|F^{\prime
}(\varepsilon\hat{p}(x))\cdot(Z\hat{p})(D_{0}(\varepsilon)x)\big|\\
&  \quad\,\,=\zeta\varepsilon^{2}\Big|\frac{F^{\prime}}{F}(\varepsilon\hat
{p}(x))\cdot(Z\hat{p})(D_{0}(\varepsilon)x)\Big|\\
&  \qquad(\text{again by the estimates in Lemma \ref{lem:funzionemodificata}%
})\\
&  \quad\,\,\leq\kappa\zeta\cdot\frac{\varepsilon^{3}\hat{p}(x)}{\hat{p}%
(D_{0}(\varepsilon)x)}=\kappa\zeta\varepsilon^{2};\\[0.1cm]
\partial_{t}\zeta &  =\varepsilon\sinh(\varepsilon t)F(\varepsilon\hat
{p}(x))=\zeta\big(\varepsilon\tanh(\varepsilon t)\big);
\end{align*}
for some `universal' constant $\kappa>0$ independent of $\varepsilon$.

From this, since $\tanh(z)\leq1$ for all $z\geq0$ (and since $c(\xi)$ is a
smooth function on the whole of $\mathbb{R}^{N+1}$), we conclude that
\begin{align*}
c(\xi)  &  =\Delta_{\mathbb{R}^{q}}\zeta-\langle Bx,\nabla\zeta\rangle
+\partial_{t}\zeta-\lambda\zeta\\
&  \leq\Delta_{\mathbb{R}^{q}}\zeta+|Z\zeta|+\partial_{t}\zeta-\lambda\zeta\\
&  \leq\zeta\big(3\kappa\varepsilon^{2}+\varepsilon-\lambda\big)<0,
\end{align*}
provided that $\varepsilon>0$ is sufficiently small.

With \eqref{eq:Kstarlambdaneg} at hand, we can finally come to the end of the
proof. In fact, since the point $\xi_{0}$ is a (global) maximum point for $g$,
we clearly have
\[
\mathcal{K}^{\ast}g(\xi_{0})=\Delta_{\mathbb{R}^{q}}g(\xi_{0})-\langle
Bx,\nabla g(\xi_{0})\rangle+\partial_{t}g(\xi_{0})=\Delta_{\mathbb{R}^{q}%
}g(\xi_{0})\leq0;
\]
on the other hand, by combining \eqref{eq:tocontradict}\thinspace
-\thinspace\eqref{eq:Kstarlambdaneg}, we have, since $\partial_{x_{i}}g\left(
x_{0}\right)  =0,$
\[
0=\zeta\mathcal{K}^{\ast}(g)(\xi_{0})+c(\xi_{0})g(\xi_{0})\leq c(\xi_{0}%
)g(\xi_{0})<0
\]
(recall that $g(\xi_{0})>0$), which is clearly a contradiction.
\end{proof}

\begin{proposition}
\label{thm:Density} Let $\lambda>0$ be fixed, and let $\mathcal{K}_{\lambda
}=\mathcal{K}-\lambda$. Then,
\[
\text{$X=\mathcal{K}_{\lambda}(C_{0}^{\infty}(\mathbb{R}^{N+1}))$ is dense in
$L^{p}(\mathbb{R}^{N+1})$}.
\]

\end{proposition}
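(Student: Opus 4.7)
The plan is to argue by duality, reducing the density of $X$ in $L^p(\mathbb{R}^{N+1})$ to Proposition \ref{thm:SMPLiouville}. By the Hahn--Banach theorem (and the Riesz representation of the dual of $L^p$), the density of $X$ fails if and only if there exists a non-trivial $g \in L^{p'}(\mathbb{R}^{N+1})$, with $p' = p/(p-1)$, such that
\begin{equation*}
\int_{\mathbb{R}^{N+1}} \mathcal{K}_\lambda u(\xi) \, g(\xi) \, d\xi = 0 \qquad \forall\,u \in C_0^\infty(\mathbb{R}^{N+1}).
\end{equation*}
Equivalently, $\mathcal{K}_\lambda^\ast g = 0$ in the distributional sense on $\mathbb{R}^{N+1}$. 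Since the vector fields $\{\partial_{x_1},\ldots,\partial_{x_q},-Y\}$ appearing in $\mathcal{K}_\lambda^\ast$ satisfy the H\"ormander rank condition --- they generate the same Lie algebra as $\{\partial_{x_1},\ldots,\partial_{x_q},Y\}$ under assumption \textbf{(H2)} --- the operator $\mathcal{K}_\lambda^\ast$ is hypoelliptic, so we may take $g \in C^\infty(\mathbb{R}^{N+1})$ with the equation holding pointwise.

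The next step is to produce from $g$ a genuinely \emph{bounded} smooth solution of the same adjoint equation, to which Proposition \ref{thm:SMPLiouville} can then be applied. For each $\phi \in C_0^\infty(\mathbb{R}^{N+1})$, I define the left group-convolution
\begin{equation*}
G_\phi(\xi) := \int_{\mathbb{R}^{N+1}} \phi(\eta) \, g(\eta \circ \xi) \, d\eta.
\end{equation*}
Since $\mathcal{K}_\lambda$ is left-invariant on $\mathbb{G}$, so is its formal adjoint $\mathcal{K}_\lambda^\ast$; hence, for every fixed $\eta$, the map $\xi \mapsto g(\eta \circ \xi)$ still satisfies $\mathcal{K}_\lambda^\ast(\,\cdot\,) = 0$ pointwise, and differentiation under the integral (justified by the compact support of $\phi$ and the smoothness of $g$) yields $\mathcal{K}_\lambda^\ast G_\phi \equiv 0$. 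The uniform $L^\infty$ bound follows via the measure-preserving substitution $\zeta = \eta \circ \xi$ (Lebesgue being the Haar measure on $\mathbb{G}$): rewriting
\begin{equation*}
G_\phi(\xi) = \int_{\mathbb{R}^{N+1}} \phi(\zeta \circ \xi^{-1}) \, g(\zeta) \, d\zeta,
\end{equation*}
H\"older's inequality (together with a further Haar-invariant change of variable in the first factor) gives $|G_\phi(\xi)| \leq \|\phi\|_{L^p} \|g\|_{L^{p'}}$, independently of $\xi$. Smoothness of $G_\phi$ likewise follows from differentiating under the integral, using the smoothness of $(\xi,\zeta) \mapsto \phi(\zeta \circ \xi^{-1})$.

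With $G_\phi \in C^\infty(\mathbb{R}^{N+1}) \cap L^\infty(\mathbb{R}^{N+1})$ and $\mathcal{K}_\lambda^\ast G_\phi \equiv 0$, Proposition \ref{thm:SMPLiouville} forces $G_\phi \equiv 0$. Specializing at $\xi = 0$, where $\eta \circ 0 = \eta$ by \eqref{traslazioni}, we obtain $\int \phi(\eta) g(\eta) \, d\eta = 0$ for every $\phi \in C_0^\infty(\mathbb{R}^{N+1})$, so $g \equiv 0$ a.e., contradicting the non-triviality of $g$. The main delicate point is the choice of \emph{left} group convolution: writing $g(\eta \circ \xi)$ (rather than $g(\xi \circ \eta)$) is precisely what converts the left-invariance of $\mathcal{K}_\lambda^\ast$ into the pointwise identity $\mathcal{K}_\lambda^\ast G_\phi = 0$, thereby placing us in the hypotheses of the Liouville property.
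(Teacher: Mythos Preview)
Your proof is correct and follows essentially the same duality-plus-Liouville strategy as the paper: Hahn--Banach/Riesz to produce $g\in L^{p'}$ with $\mathcal{K}_\lambda^\ast g=0$, hypoellipticity to upgrade to $g\in C^\infty$, then a group convolution to manufacture a \emph{bounded} smooth solution to which Proposition~\ref{thm:SMPLiouville} applies. The only difference is cosmetic: the paper convolves with an approximate identity $J_\varepsilon$, verifies $\mathcal{K}_\lambda^\ast f_\varepsilon=0$ via the weak formulation, and passes to the limit $\varepsilon\to0$, whereas you convolve with an arbitrary test function $\phi$, use the left-invariance of $\mathcal{K}_\lambda^\ast$ directly on the strong equation, and read off $\int\phi\,g=0$ by evaluating at the group identity---equivalent variants of the same idea.
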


\begin{proof}
We argue by contradiction, assuming that the vector space $X$ is not dense in
$L^{p}(\mathbb{R}^{N+1})$. Then, by the Hahn-Banach theorem, there exists a
nontrivial linear continuous functional on $L^{p}(\mathbb{R}^{N+1})$ which
vanishes on $X$. Hence, by Riesz' representation theorem, we can find some
non-zero function $f\in L^{q}(\mathbb{R}^{N+1})$ (where $q$ is the conjugate
exponent of $p$, that is, $1/q=1-1/p$) such that
\begin{equation}
\int_{\mathbb{R}^{N+1}}f(\xi)\mathcal{K}_{\lambda}\varphi(\xi)\,d\xi
=0\quad\text{for every $\varphi\in C_{0}^{\infty}(\mathbb{R}^{N+1})$}.
\label{eq:dovepartire}%
\end{equation}
This means that
\begin{equation}
\mathcal{K}\text{$_{\lambda}^{\ast}f=0$ in $\mathcal{D}^{\prime}%
(\mathbb{R}^{N+1})$}; \label{eq:KlambdazeroDistrib}%
\end{equation}
thus, since $\mathcal{K}_{\lambda}^{\ast}$ is $C^{\infty}$-hypoelliptic, we
derive that $f\in C^{\infty}(\mathbb{R}^{N+1})$, and
\eqref{eq:KlambdazeroDistrib} holds pointwise on $\mathbb{R}^{N+1}$. Despite
this fact, we cannot directly apply Proposition \ref{thm:SMPLiouville} to
deduce that $f\equiv0$, since we do not know that $f$ \emph{is bounded}. To
overcome this issue, following the argument in \cite[Prop.\,5.3.9]{BLULibro}
we introduce the mollified function $f_{\varepsilon}$ defined by
\[
f_{\varepsilon}(\xi)=\int_{\mathbb{R}^{N+1}}f(\eta)J_{\varepsilon}(\xi
\circ\eta^{-1})\,d\eta
\]
(with $J_{\varepsilon}$ the usual family of mollifiers `adapted' to the group
$\mathbb{G}$). Then, we easily see that $f_{\varepsilon}\in C^{\infty
}(\mathbb{R}^{N+1})$ and we can write, for every $\phi\in C_{0}^{\infty
}\left(  \mathbb{R}^{N+1}\right)  $,%
\begin{align*}
\int_{\mathbb{R}^{N+1}}f_{\varepsilon}(\xi)\mathcal{K}_{\lambda}\phi\left(
\xi\right)  \,d\xi &  =\int_{\mathbb{R}^{N+1}}\left(  \int_{\mathbb{R}^{N+1}%
}f(\eta^{-1}\circ\xi)J_{\varepsilon}(\eta)\,d\eta\right)  \mathcal{K}%
_{\lambda}\phi\left(  \xi\right)  \,d\xi\\
&  =\int_{\mathbb{R}^{N+1}}J_{\varepsilon}(\eta)\left(  \int_{\mathbb{R}%
^{N+1}}f(\eta^{-1}\circ\xi)\mathcal{K}_{\lambda}\phi\left(  \xi\right)
\,d\xi\right)  d\eta\\
&  =\int_{\mathbb{R}^{N+1}}J_{\varepsilon}(\eta)\left(  \int_{\mathbb{R}%
^{N+1}}f(\xi)\mathcal{K}_{\lambda}\phi\left(  \eta\circ\xi\right)
\,d\xi\right)  d\eta=0
\end{align*}
because, by (\ref{eq:dovepartire}),%
\[
\int_{\mathbb{R}^{N+1}}f(\xi)\mathcal{K}_{\lambda}\phi\left(  \eta\circ
\xi\right)  \,d\xi=\int_{\mathbb{R}^{N+1}}f(\xi)\mathcal{K}_{\lambda}%
\phi^{\eta}\left(  \xi\right)  \,d\xi=0
\]
since the function $\phi^{\eta}\left(  \xi\right)  =\phi\left(  \eta\circ
\xi\right)  $ belongs to $C_{0}^{\infty}(\mathbb{R}^{N+1})$. As a consequence,
we have $\mathcal{K}_{\lambda}^{\ast}(f_{\varepsilon})=0$ in $\mathcal{D}%
^{\prime}(\mathbb{R}^{N+1})$ and thus, by the $C^{\infty}$-hypoellipticity of
$\mathcal{K}_{\lambda}^{\ast}$,
\[
\text{$f_{\varepsilon}\in C^{\infty}(\mathbb{R}^{N+1})$ \quad and
\quad$\mathcal{K}_{\lambda}^{\ast}(f_{\varepsilon})=0$ pointwise in
$\mathbb{R}^{N+1}$}.
\]
On the other hand, since $f\in L^{q}(\mathbb{R}^{N+1})$, we easily see that
the (smooth) function $f_{\varepsilon}$ is bounded (for every fixed
$\varepsilon>0$): in fact, we have
\begin{align*}
\left\vert f_{\varepsilon}(\xi)\right\vert  &  \leq\Vert f\Vert_{L^{q}%
(\mathbb{R}^{N+1})}\left(  \int_{\mathbb{R}^{N+1}}J_{\varepsilon}^{p}(\xi
\circ\eta^{-1})\,d\eta\right)  ^{1/p}\\
&  (\text{setting $\eta=\zeta^{-1}\circ\xi$, and noting that $d\eta=d\zeta$%
})\\
&  =\Vert f\Vert_{L^{q}(\mathbb{R}^{N+1})}\left(  \int_{\mathbb{R}^{N+1}%
}J_{\varepsilon}^{p}(\zeta)\,d\zeta\right)  ^{1/p}=c\left(  \varepsilon
\right)  <+\infty.
\end{align*}
Hence, we can apply Proposition \ref{thm:SMPLiouville}, obtaining
\[
\text{$f_{\varepsilon}\equiv0$ pointwise in $\mathbb{R}^{N+1}$}.
\]
From this, since $f_{\varepsilon}\rightarrow f$ as $\varepsilon\rightarrow
0^{+}$ in $L_{\mathrm{loc}}^{1}(\mathbb{R}^{N+1})$ (see, e.g.,
\cite[Rem.\,5.3.8]{BLULibro}), we conclude that $f\equiv0$ a.e.\thinspace in
$\mathbb{R}^{N+1}$, but this is a contradiction.
\end{proof}

Thanks to the above results, we can then prove the solvability of \eqref{eq:Kulambdaf}.

\begin{theorem}
[Solvability of $\mathcal{K}_{\lambda}u=f$]\label{thm:SolvabilityKlambda} Let
$\lambda\geq\lambda_{0}$ be fixed \emph{(}where $\lambda_{0}>0$ is as in
Theorem \ref{Thm global strip}\emph{)}, and let $-\infty<T\leq+\infty$ and
$p\in\left(  1,\infty\right)  .$ Then, for every $f\in L^{p}(S_{T})$ there
exists a \emph{unique} function $u\in W_{X}^{2,p}(S_{T})$ such that
\begin{equation}
\text{$\mathcal{K}u-\lambda u=f$ a.e.\thinspace in $S_{T}$}.
\label{eq:KulambdaThm}%
\end{equation}
Moreover, we have the estimate
\begin{equation}
\Vert u\Vert_{W_{X}^{2,p}(S_{T})}\leq c\,\Vert f\Vert_{L^{p}(S_{T})},
\label{eq:aprioriestimateKu}%
\end{equation}
with the constant $c$ as in Theorem \ref{thm:ImprovedLambda}.
\end{theorem}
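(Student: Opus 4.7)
The plan is to combine the a priori estimate of Theorem \ref{thm:ImprovedLambda} with the density statement of Proposition \ref{thm:Density} to close the range of $\mathcal{K}_\lambda$ by a standard functional-analytic argument, and then to deduce the strip case $T<+\infty$ by extension. First I note that Theorem \ref{thm:ImprovedLambda} does apply to $\mathcal{K}_\lambda$: its coefficient matrix $A_0\equiv I_q$ is constant, hence in $L^\infty$, uniformly positive with $\nu=1$, and trivially $VMO_x$ with $a_\cdot^\sharp\equiv 0$; the drift $Y$ is the same one occurring in $\mathcal{L}$. Thus (H1)-(H3) hold with structural constants independent of $\lambda$.

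Uniqueness and the bound \eqref{eq:aprioriestimateKu} will fall straight out of Theorem \ref{thm:ImprovedLambda}: if two solutions $u_1,u_2\in W_X^{2,p}(S_T)$ exist, their difference $v$ satisfies $\mathcal{K}_\lambda v=0$ in $S_T$, and the a priori estimate forces $\|v\|_{W_X^{2,p}(S_T)}=0$; applied instead to any single solution $u$, the same estimate yields \eqref{eq:aprioriestimateKu}.

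For existence in the case $T=+\infty$, given $f\in L^p(\mathbb{R}^{N+1})$ I would use Proposition \ref{thm:Density} to produce $\varphi_n\in C_0^\infty(\mathbb{R}^{N+1})$ with $\mathcal{K}_\lambda\varphi_n\to f$ in $L^p$. Applying Theorem \ref{thm:ImprovedLambda} to the differences $\varphi_n-\varphi_m$ shows that $\{\varphi_n\}$ is Cauchy in $W_X^{2,p}(\mathbb{R}^{N+1})$; denote its limit by $u$. Since every term of $\mathcal{K}_\lambda$ is continuous from $W_X^{2,p}(\mathbb{R}^{N+1})$ into $L^p(\mathbb{R}^{N+1})$, we also have $\mathcal{K}_\lambda\varphi_n\to \mathcal{K}_\lambda u$ in $L^p$, and therefore $\mathcal{K}_\lambda u=f$ a.e. For $T<+\infty$, I would extend $f$ by zero outside $S_T$ to obtain $\tilde f\in L^p(\mathbb{R}^{N+1})$ with $\|\tilde f\|_{L^p(\mathbb{R}^{N+1})}=\|f\|_{L^p(S_T)}$, apply the previous step to obtain $\tilde u\in W_X^{2,p}(\mathbb{R}^{N+1})$ with $\mathcal{K}_\lambda\tilde u=\tilde f$ a.e., and take $u:=\tilde u|_{S_T}\in W_X^{2,p}(S_T)$, which then satisfies \eqref{eq:KulambdaThm}.

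The substantive ingredients are Proposition \ref{thm:Density} (whose proof requires the Liouville-type Proposition \ref{thm:SMPLiouville} plus hypoellipticity of $\mathcal{K}_\lambda^\ast$) together with Theorem \ref{thm:ImprovedLambda}; once both are in hand I do not expect any genuine obstacle, the remainder being a textbook closed-range/approximation argument, followed by a trivial extension-restriction step to handle the strips $S_T$ with $T$ finite.
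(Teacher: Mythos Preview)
Your proposal is correct and follows essentially the same route as the paper: uniqueness and the estimate via Theorem \ref{thm:ImprovedLambda}, existence on $\mathbb{R}^{N+1}$ by combining Proposition \ref{thm:Density} with the a priori bound to produce a Cauchy sequence in $W_X^{2,p}$, and then extension--restriction to handle $T<+\infty$. The paper's proof is the same argument, only slightly more terse in justifying that $\mathcal{K}$ falls under the hypotheses of Theorem \ref{thm:ImprovedLambda}.
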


\begin{proof}
First of all we observe that, if a solution $u\in W_{X}^{2,p}(S_{T})$ of
equation \eqref{eq:KulambdaThm} does exist (for some fixed $f\in
L^{p}(\mathbb{R}^{N+1})$), by Theorem \ref{thm:ImprovedLambda} we have
\[
\Vert u\Vert_{W_{X}^{2,p}(S_{T})}\leq c\,\Vert\mathcal{K}u-\lambda
u\Vert_{L^{p}(S_{T})}=c\,\Vert f\Vert_{L^{p}(S_{T})};
\]
this proves at once the validity of the estimate \eqref{eq:aprioriestimateKu},
and the uniqueness part of the theorem.

As for the existence part, we split the proof into two steps. \medskip

\noindent-\thinspace\thinspace\emph{Step I).} Let us first prove the
solvability of \eqref{eq:KulambdaThm} assuming that $T=+\infty$ (that is, when
$S_{T}=\mathbb{R}^{N+1}$). In this case, given any $f\in L^{p}(\mathbb{R}%
^{N+1})$, we know from Pro\-position \ref{thm:Density} that there exists a
sequence $\{\varphi_{n}\}_{n}\subseteq C_{0}^{\infty}(\mathbb{R}^{N+1})$ such
that
\[
f_{n}=(\mathcal{K}-\lambda)\varphi_{n}\rightarrow f\quad\text{as
$n\rightarrow+\infty$ in $L^{p}(\mathbb{R}^{N+1})$};
\]
on the other hand, from Theorem \ref{thm:ImprovedLambda} we infer that
\begin{align*}
\Vert\varphi_{n}-\varphi_{m}\Vert_{W_{X}^{2,p}(\mathbb{R}^{N+1})}  &  \leq
c\,\Vert(\mathcal{K}-\lambda)(\varphi_{n}-\varphi_{m})\Vert_{L^{p}%
(\mathbb{R}^{N+1})}\\
&  =c\,\Vert f_{n}-f_{m}\Vert_{L^{p}(\mathbb{R}^{N+1})},
\end{align*}
and this shows that $\{\varphi_{n}\}_{n}$ is a \emph{Cauchy sequence} in
\emph{the Banach space} $W_{X}^{2,p}(\mathbb{R}^{N+1})$. Thus, we can find
some $u\in W_{X}^{2,p}(\mathbb{R}^{N+1})$ such that
\begin{equation}
\varphi\text{$_{n}\rightarrow u$ as $n\rightarrow+\infty$ in $W_{X}%
^{2,p}(\mathbb{R}^{N+1})$}. \label{eq:untouW2p}%
\end{equation}
Now, since $\mathcal{K}=\Delta_{\mathbb{R}^{q}}+Y$, by \eqref{eq:untouW2p}
(and the definition of $W_{X}^{2,p}$, see Definition \ref{Def Sobolev}) we
infer that $f_{n}=(\mathcal{K}-\lambda)\varphi_{n}\rightarrow(\mathcal{K}%
-\lambda)u$ as $n\rightarrow+\infty$ in $L^{p}(\mathbb{R}^{N+1})$; from this,
since we also have $f_{n}\rightarrow f$ as $n\rightarrow+\infty$ in
$L^{p}(\mathbb{R}^{N+1})$, we conclude that
\[
\text{$\mathcal{K}u-\lambda u=f$ a.e.\thinspace in $\mathbb{R}^{N+1}$},
\]
and this proves that $u$ is a solution of \eqref{eq:KulambdaThm}. \medskip

\noindent-\thinspace\thinspace\emph{Step II).} Let us now turn to prove the
solvability of equation \eqref{eq:KulambdaThm} in the case $T<+\infty$. Let
then $f\in L^{p}(S_{T})$ be fixed, and let
\[
g=f\cdot\mathbf{1}_{S_{T}}.
\]
Since, obviously, we have $g\in L^{p}(\mathbb{R}^{N+1})$, from the previous
\emph{Step I)} we know that there exists a (unique) function $v\in
W^{2,p}(\mathbb{R}^{N+1})$ such that
\[
\mathcal{K}v-\lambda v=g\quad\text{a.e.\thinspace in $\mathbb{R}^{N+1}$}.
\]
Then, setting $u=v|_{S_{T}}$, we immediately see that $u\in W^{2,p}(S_{T})$,
and that $u$ is a solution of equation \eqref{eq:KulambdaThm} (since $g=f$
a.e.\thinspace in $S_{T}$). This ends the proof.
\end{proof}

By combining Theorem \ref{thm:SolvabilityKlambda} with the localized estimates
in Theorem \ref{Thm global strip}, we can now prove the solvability of \eqref{eq:Lulambdaf}.

\begin{theorem}
[Solvability of $\mathcal{L}u-\lambda u=f$]\label{thm:SolvabilityLlambda} Let
$\lambda\geq\lambda_{0}$ be fixed \emph{(}where $\lambda_{0}>0$ is as in
Theorem \ref{Thm global strip}\emph{)}, and let $-\infty<T\leq+\infty$.
Moreover, let $\mathcal{L}$ be an operator as in \eqref{L}, assume that
\emph{\textbf{(H1)}, \textbf{(H2)}, \textbf{(H3)}} hold and let $p\in\left(
1,\infty\right)  $. Then, for every $f\in L^{p}(S_{T})$ there exists a
\emph{unique} function $u\in W_{X}^{2,p}(S_{T})$ such that
\begin{equation}
\text{$\mathcal{L}u-\lambda u=f$ a.e.\thinspace in $S_{T}$}.
\label{eq:LulambdaThm}%
\end{equation}
Moreover, we have the estimate
\begin{equation}
\Vert u\Vert_{W_{X}^{2,p}(S_{T})}\leq c\,\Vert f\Vert_{L^{p}(S_{T})}
\label{eq:aprioriestimateLu}%
\end{equation}
with the constant $c$ as in Theorem \ref{thm:ImprovedLambda}.
\end{theorem}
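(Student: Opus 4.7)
The plan is to use the method of continuity to reduce the solvability of $\mathcal{L}u - \lambda u = f$ to the already established solvability of $\mathcal{K}u - \lambda u = f$ in Theorem \ref{thm:SolvabilityKlambda}, exploiting the uniform a-priori estimate provided by Theorem \ref{thm:ImprovedLambda}. The uniqueness part, together with estimate \eqref{eq:aprioriestimateLu}, is immediate: if $u \in W_X^{2,p}(S_T)$ solves \eqref{eq:LulambdaThm}, then by Theorem \ref{thm:ImprovedLambda} (applied with $\lambda \ge \lambda_0$) we have
\[
\|u\|_{W_X^{2,p}(S_T)} \le c\,\|\mathcal{L}u - \lambda u\|_{L^p(S_T)} = c\,\|f\|_{L^p(S_T)},
\]
so any two solutions corresponding to the same $f$ must coincide, and the desired a-priori bound holds.

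For the existence part, I would introduce the interpolating family of operators
\[
\mathcal{L}_\tau = (1-\tau)\,\mathcal{K} + \tau\,\mathcal{L} = \sum_{i,j=1}^{q} a^\tau_{ij}(x,t)\,\partial_{x_i x_j}^2 + Y, \qquad \tau \in [0,1],
\]
with $a^\tau_{ij}(x,t) = (1-\tau)\delta_{ij} + \tau\,a_{ij}(x,t)$. The key observation is that every $\mathcal{L}_\tau$ still satisfies assumptions \textbf{(H1)}, \textbf{(H2)}, \textbf{(H3)}, \emph{uniformly in $\tau$}: the matrix $B$ is unchanged; the set of matrices satisfying the ellipticity condition \eqref{nu} is convex, so $\{a^\tau_{ij}\}$ is uniformly elliptic with the \emph{same} constant $\nu$ (possibly after replacing $\nu$ with $\min\{\nu,1\}$); and the $VMO_x$ modulus of $a^\tau_{ij}$ is bounded by that of $a_{ij}$, since the constant part $(1-\tau)\delta_{ij}$ has zero oscillation. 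Consequently, Theorem \ref{thm:ImprovedLambda} yields a single constant $c > 0$ (and a single $\lambda_0 > 0$) such that
\[
\|u\|_{W_X^{2,p}(S_T)} \le c\,\|\mathcal{L}_\tau u - \lambda u\|_{L^p(S_T)} \qquad \text{for every $\tau \in [0,1]$ and every $u \in W_X^{2,p}(S_T)$},
\]
whenever $\lambda \ge \lambda_0$.

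With this uniform estimate in hand, I would invoke the standard method of continuity in Banach spaces: let $T_\tau : W_X^{2,p}(S_T) \to L^p(S_T)$ be the bounded linear operator $T_\tau u = \mathcal{L}_\tau u - \lambda u$, and let
\[
\Sigma = \{\tau \in [0,1] : T_\tau \text{ is surjective}\}.
\]
By Theorem \ref{thm:SolvabilityKlambda}, we have $0 \in \Sigma$ (since $T_0 = \mathcal{K} - \lambda$). The uniform a-priori estimate implies that each $T_\tau$ is injective with closed range; it also allows one to show that, if $\tau_0 \in \Sigma$, then for every $\tau \in [0,1]$ with $|\tau - \tau_0|$ sufficiently small the equation $T_\tau u = f$ can be solved by a fixed-point argument (writing it as $u = T_{\tau_0}^{-1}(f + (\tau_0 - \tau)(\mathcal{L} - \mathcal{K})u)$ and using that $\|(\tau_0 - \tau)(\mathcal{L} - \mathcal{K})u\|_{L^p} \le C|\tau - \tau_0|\,\|u\|_{W_X^{2,p}}$). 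Hence $\Sigma$ is both open and closed in $[0,1]$, and therefore $\Sigma = [0,1]$; in particular, $T_1 = \mathcal{L} - \lambda$ is surjective onto $L^p(S_T)$, which is precisely the desired existence statement.

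The main technical point to verify carefully is that $\mathcal{L}_\tau$ inherits \textbf{(H3)} uniformly in $\tau$, since the constant $c$ in Theorem \ref{thm:ImprovedLambda} depends on the function $a_\cdot^{\#}$; but the elementary bound $\eta_{a^\tau_{ij}}(r) \le \tau\,\eta_{a_{ij}}(r) \le \eta_{a_{ij}}(r)$ makes this uniformity straightforward. The rest of the argument is a routine application of the method of continuity, which is applicable precisely because our earlier results have separated the a-priori estimate (for the full class of operators $\mathcal{L}_\tau$) from the solvability at one endpoint of the homotopy (for the constant-coefficient model $\mathcal{K}$).
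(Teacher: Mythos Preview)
Your proposal is correct and follows essentially the same approach as the paper: both use the method of continuity with the interpolating family $\mathcal{L}_\tau=(1-\tau)\mathcal{K}+\tau\mathcal{L}$, the uniform a-priori estimate from Theorem~\ref{thm:ImprovedLambda}, and the solvability of $\mathcal{K}-\lambda$ from Theorem~\ref{thm:SolvabilityKlambda} as the starting point. Your discussion of why \textbf{(H1)}--\textbf{(H3)} hold uniformly in $\tau$ is in fact more detailed than the paper's, which simply asserts this uniformity.
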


\begin{proof}
We are going to prove the theorem by using the method of conti\-nuity. To this
end we consider, for every fixed $r\in\lbrack0,1]$, the operator
\[
\mathcal{P}_{r}:W_{X}^{2,p}(S_{T})\rightarrow L^{p}(S_{T}),\qquad
\mathcal{P}_{r}u=\mathcal{L}_{r}u-\lambda u,
\]
where $\mathcal{L}_{r}=(1-r)\mathcal{K}+r\mathcal{L}$, that is,
\[
\mathcal{L}_{r}=\sum_{i,j=1}^{q}\big((1-r)\delta_{ij}+ra_{ij}%
(x,t)\big)\partial_{x_{i}x_{j}}^{2}+Y\equiv\sum_{i,j=1}^{q}a_{ij}%
^{r}(x,t)\partial_{x_{i}x_{j}}^{2}+Y.
\]
We then observe that, since the coefficients $a_{ij}$ are globally bounded in
$\mathbb{R}^{N+1}$ (see assumption \textbf{(H1)}), and since $0\leq r\leq1$,
for every $u\in W_{X}^{2,p}(S_{T})$ we get
\begin{align*}
\Vert\mathcal{P}_{r}u\Vert_{L^{p}(S_{T})}  &  \leq\sum_{i,j=1}^{q}\Vert
a_{ij}^{r}\partial_{x_{i}x_{j}}^{2}u\Vert_{L^{p}(S_{T})}+\Vert Yu\Vert
_{L^{p}(S_{T})}+\lambda\Vert u\Vert_{L^{p}(S_{T})}\\
&  \leq(1+\max_{i,j}\Vert a_{ij}\Vert_{L^{\infty}(S_{T})})\sum_{i,j=1}%
^{q}\Vert\partial_{x_{i}x_{j}}^{2}u\Vert_{L^{p}(S_{T})}\\
&  \qquad+\Vert Yu\Vert_{L^{p}(S_{T})}+\lambda\Vert u\Vert_{L^{p}(S_{T})}\\
&  \leq c\,\Vert u\Vert_{W_{X}^{2,p}(S_{T})},
\end{align*}
and this proves that $\mathcal{P}_{r}$ is a (linear and) continuous operator
from $W^{2,p}(S_{T})$ into $L^{p}(S_{T})$, with norm bounded independently of
$r$. Moreover, by Theorem \ref{thm:ImprovedLambda} there exists a constant
$c>0$, independent of $r$, such that
\begin{equation}
\Vert u\Vert_{W_{X}^{2,p}(S_{T})}\leq c\,\Vert\mathcal{P}_{r}u\Vert
_{L^{p}(S_{T})}. \label{eq:aprioriLr}%
\end{equation}
We stress that we are entitled to apply Theorem \ref{thm:ImprovedLambda} to
the ope\-rator $\mathcal{P}_{r}$, since it is a KFP operator of the form
\eqref{L}, satisfying assumptions \textbf{(H1)}, \textbf{(H2)}, \textbf{(H3)},
with uniform constants as $r$ ranges in $\left[  0,1\right]  $. We can then
apply the method of continuity: since we know from Theorem
\ref{thm:SolvabilityKlambda} that the operator $\mathcal{P}_{0}=\mathcal{K}%
-\lambda$ is surjective, we conclude that the same is true of the operator
$\mathcal{P}_{1}=\mathcal{L}-\lambda$, that is, for every fixed $f\in
L^{p}(\mathbb{R}^{N+1})$ there exists a unique function $u\in W_{X}%
^{2,p}(S_{T})$ such that
\[
\text{$\mathcal{L}u-\lambda u=f$ a.e.\thinspace in $S_{T}$}.
\]
Finally, the validity of estimate \eqref{eq:aprioriestimateLu} follows from
\eqref{eq:aprioriLr} (with $r=1$).
\end{proof}

\subsection{\noindent The Cauchy problem for $\mathcal{L}$%
\label{Subsec Cauchy}}

We can now prove our well-posedness result for the $\mathcal{L}$\,-\,Cauchy
problem, that is, Theorem \ref{thm:ExistenceCauchy}; we refer to Section
\ref{sec assumptions} for the definition of \emph{solution of the Cauchy
problem}, and for the definition of the involved function spaces $\mathring
{W}_{X}^{2,p}\left(  \Omega_{T}\right)  ,W_{X}^{2,p}(\mathbb{R}^{N})$.

\bigskip

\begin{proof}
[Proof of Theorem \ref{thm:ExistenceCauchy}]We split the proof into three
steps. \medskip

\noindent-\thinspace\thinspace\emph{Step I).} Here we prove the
\emph{existence part} of the theorem, together with the validity of estimate
\eqref{eq:estimateSolCauchy}, in the special case $g=0$.

To this end, we fix $f\in L^{p}(\Omega_{T})$, and we set
\[
h=e^{-\lambda_{0}t}f\cdot\mathbf{1}_{[0,T]},
\]
where $\lambda_{0}>0$ is as in Theorem \ref{thm:SolvabilityLlambda}. (As
usual, the symbol $f\cdot\mathbf{1}_{[0,T]}$ means that the function $f$ has
been defined in the whole strip $S_{T}$, letting it equal to zero for $t<0$).
Since, obviously, $h\in L^{p}(S_{T})$, by Theorem \ref{thm:SolvabilityLlambda}
we know that there exists a (unique) $v\in W_{X}^{2,p}(S_{T})$ such that
\[
\text{$\mathcal{L}v-\lambda_{0}v=h$ a.e.\thinspace in $S_{T}$};
\]
moreover, by applying the estimate (\ref{eq:aprioriestimateLu}) on the strip
$S_{0}$ (to the `restricted' function $v|_{S_{0}}\in W_{X}^{2,p}(S_{0})$), we
have
\[
\Vert v\Vert_{W^{2,p}(S_{0})}\leq c\,\Vert\mathcal{L}v-\lambda_{0}%
v\Vert_{L^{p}(S_{0})}=\Vert h\Vert_{L^{p}(S_{0})}=0,
\]
from which we derive that
\begin{equation}
\text{$v=0$ a.e.\thinspace in $S_{0}$}. \label{eq:vvanishS0}%
\end{equation}
Then, we set $u=e^{\lambda_{0}t}v$, and we claim that $u$ is a solution to the
Cauchy problem \eqref{eq:PbCauchyL}. Indeed, since $v\in W_{X}^{2,p}(S_{T})$,
also $u\in W_{X}^{2,p}(S_{T})$ (here it is important that $T<\infty$), and by
(\ref{eq:vvanishS0}) we conclude that
\[
\text{$u\in$}\mathring{W}_{X}^{2,p}\left(  \Omega_{T}\right)  .
\]
Moreover, since $\mathcal{L}v-\lambda_{0}v=h$, by a direct computation we
have
\begin{align*}
\mathcal{L}u  &  =\mathcal{L}(e^{\lambda_{0}t}v)=e^{\lambda_{0}t}\left(
\mathcal{L}v-\lambda_{0}v\right)  =e^{\lambda_{0}t}h\\
&  (\text{by definition of $h$})\\
&  =f\cdot\mathbf{1}_{[0,T]}=f\quad\text{a.e.\thinspace in $\Omega_{T}$},
\end{align*}
and this proves that $u$ is a solution of the Cauchy problem
\eqref{eq:PbCauchyL}, as claimed.

As for the validity of estimate (\ref{eq:estimateSolCauchy}) it suffices to
note that, by applying (\ref{eq:aprioriestimateLu}) on the strip $S_{T}$ to
$v$ (and exploiting again the finiteness of $T$), we get
\begin{align*}
\Vert u\Vert_{W_{X}^{2,p}(\Omega_{T})}  &  =\,\Vert e^{\lambda_{0}t}%
v\Vert_{W_{X}^{2,p}(\Omega_{T})}\leq c\left(  T\right)  \,\Vert v\Vert
_{W_{X}^{2,p}(S_{T})}\\
&  \leq c\,\Vert\mathcal{L}v-\lambda_{0}v\Vert_{L^{p}(S_{T})}=c\,\Vert
h\Vert_{L^{p}(S_{T})}\\
&  =c\,\Vert e^{-\lambda_{0}t}f\Vert_{L^{p}(\Omega_{T})}\leq c\,\Vert
f\Vert_{L^{p}(\Omega_{T})},
\end{align*}
for some absolute constant $c>0$ (possibly different from line to
line).\medskip

\noindent-\thinspace\thinspace\emph{Step II).} We now consider general data
$f\in L^{p}\left(  \Omega_{T}\right)  $ and $g\in W_{X}^{2,p}(\mathbb{R}^{N})$
for the Cauchy problem. The function $\widetilde{f}=f-\mathcal{L}g$ clearly
belongs to $L^{p}\left(  \Omega_{T}\right)  $; hence, by \emph{Step I)}, we
can find $v\in\mathring{W}_{X}^{2,p}(\Omega_{T})$ such that $v$ is a solution
to%
\begin{equation}
\left\{
\begin{tabular}
[c]{ll}%
$\mathcal{L}v=f-\mathcal{L}g$ & $\text{in }\Omega_{T}$\\
$v\left(  \cdot,0\right)  =0$ & $\text{in }\mathbb{R}^{N}$%
\end{tabular}
\right.  \label{Cauchy_aux}%
\end{equation}
and
\begin{equation}
\Vert v\Vert_{W_{X}^{2,p}(\Omega_{T})}\leq c\,\left\Vert \widetilde{f}%
\right\Vert _{L^{p}(\Omega_{T})}\leq c\,\left\{  \left\Vert f\right\Vert
_{L^{p}(\Omega_{T})}+\Vert g\Vert_{W_{X}^{2,p}(\mathbb{R}^{N})}\right\}  .
\label{apriori_Cauchy1}%
\end{equation}
Define $u\left(  x,t\right)  =v\left(  x,t\right)  +g\left(  x\right)  $. Then
$u\in W_{X}^{2,p}(\Omega_{T})$ and $\mathcal{L}u=f$ in $\Omega_{T}$. Moreover,
$u-g=v\in\mathring{W}_{X}^{2,p}(\Omega_{T})$, hence $u$ is a solution to the
Cauchy problem (\ref{eq:PbCauchyL}).

Finally,%
\begin{equation}
\Vert u\Vert_{W_{X}^{2,p}(\Omega_{T})}\leq\Vert v\Vert_{W_{X}^{2,p}(\Omega
_{T})}+\Vert g\Vert_{W_{X}^{2,p}(\Omega_{T})}. \label{apriori_Cauchy2}%
\end{equation}
By (\ref{g spazio e tempo}), (\ref{apriori_Cauchy1}), and
(\ref{apriori_Cauchy2}), we get (\ref{eq:estimateSolCauchy}).\medskip

\noindent-\thinspace\thinspace\emph{Step III).} We now prove the
\emph{uniqueness part} of the theorem. This \emph{does not follow} directly
from estimate (\ref{eq:estimateSolCauchy}), since this estimate has been
proved \emph{only for the solution $u$} constructed in the previous
\emph{Steps I-II)}. However, let $u_{1},u_{2}\in W_{X}^{2,p}(\Omega_{T})$ be
two solutions of the Cauchy problem (\ref{eq:PbCauchyL}). Then the function
$\bar{u}=u_{1}-u_{2}$ belongs to $\mathring{W}_{X}^{2,p}(\Omega_{T})$ and
solves $\mathcal{L}\overline{u}=0$ in $\Omega_{T}$. Hence also the function%
\[
v=e^{-\lambda_{0}t}\bar{u}%
\]
belongs to $\mathring{W}_{X}^{2,p}(\Omega_{T})$. Moreover, by proceeding as in
\emph{Step I)}, we have
\[
\mathcal{L}v-\lambda_{0}v=e^{-\lambda_{0}t}\mathcal{L}\bar{u}=0\quad
\text{a.e.\thinspace in $\Omega_{T}$}.
\]
As a consequence, by applying (\ref{eq:LpOmogenee}) to the strip $S_{T}$ and
to the function $v$, we obtain the following estimate (recall that $v=0$
a.e.\thinspace in $S_{0}$)
\[
\Vert v\Vert_{W_{X}^{2,p}(S_{T})}\leq c\,\Vert\mathcal{L}v-\lambda_{0}%
v\Vert_{L^{p}(S_{T})}=c\,\Vert\mathcal{L}v-\lambda_{0}v\Vert_{L^{p}(\Omega
_{T})}=0.
\]
This proves that $v=0$ a.e.\thinspace in $S_{T}$, and thus
\[
\text{$u_{1}=u_{2}$ a.e.\thinspace in $S_{T}$}.
\]
This ends the proof.
\end{proof}

\begin{remark}
\label{rem:FiniteT} It is clear from the above proof of Theorem
\ref{thm:ExistenceCauchy} that, differently from the previous results, in this
theorem we have to assume $0<T<+\infty$.

Indeed, if $T = +\infty$ (so that $\Omega_{T} = \mathbb{R}^{N}\times
(0,+\infty)$), the change of variable
\[
v\longleftrightarrow e^{\lambda_{0}t}v=u
\]
(used in \emph{Step I)} to link the solvability of the Cauchy problem to that
of the e\-qua\-tion $\mathcal{L}v-\lambda_{0}v = h$) \emph{does not preserve}
the property of belonging to $W^{2,p}_{X}(\Omega_{T})$.
\end{remark}

\bigskip

\noindent\textbf{Acknowledgements. }{The Authors are members of the research
group \textquotedblleft Grup\-po Na\-zio\-nale per l'Analisi Matematica, la
Probabilit\`{a} e le loro Applicazioni\textquotedblright\ of the Italian
\textquotedblleft Istituto Na\-zio\-na\-le di Alta
Matematica\textquotedblright. The first Author is partially supported by the
PRIN 2022 project 2022R537CS \emph{$NO^{3}$ - Nodal Optimization, NOnlinear
elliptic equations, NOnlocal geometric problems, with a focus on regularity},
funded by the European Union - Next Generation EU}; the second Author is
partially supported by the PRIN 2022 project \emph{Partial differential
equations and related geometric-functional inequalities}, financially
supported by the EU, in the framework of the \textquotedblleft Next Generation
EU initiative\textquotedblright.

\bigskip

\noindent\textbf{Address. }S. Biagi, M. Bramanti:

\noindent Dipartimento di Matematica. Politecnico di Milano. \newline\noindent
Via Bonardi 9. 20133 Milano. Italy.

\noindent e-mail: stefano.biagi@polimi.it; marco.bramanti@polimi.it

\bigskip

\end{document}